\newtheorem{satz}{Theorem}
\newtheorem{proposition}[satz]{Proposition}
\newtheorem{theorem}[satz]{Theorem}
\newtheorem{lemma}[satz]{Lemma}
\newtheorem{definition}[satz]{Definition}
\newtheorem{corollary}[satz]{Corollary}
\newtheorem{example}[satz]{Example}
\def\eps{\varepsilon}
\def\_phi{\varphi}
\def\a{\alpha}
\def\d{\delta}
\def\la{\lambda}
\def\v{\vec}
\def\F{{\mathbb F}}
\def\ov{\overline}
\def\C{{\mathbb C}}
\def\R{{\mathbb R}}
\def\E{\mathsf {E}}
\def\Z_N{{\mathbb Z}_N}
\def\Z{{\mathbb Z}}
\def\N{{\mathbb N}}
\def\f{{\mathbb F}}
\def\Gr{{\mathbf G}}
\def\D{{\mathbb D}}
\def\FF{\widehat}
\def\c{\circ}
\def\D{\Delta}
\def\Cf{{\mathcal C}}
\def\wt{{\mathrm {wt}}}
\def\Sym{\mathsf {Sym}}
\author{Shkredov I.D.}
\title{ Some remarks on the Balog--Wooley decomposition theorem and quantities $D^{+}$, $D^\times$
\footnote{
This work was supported by grant
Russian Scientific Foundation RSF 14--11--00433.}
%\newline
%{\bf Keywords} : Gowers norms, linear equations.
%\newline
%MSC 2000 : 11B75, 11B99.}
}
\date{}
\begin{document}
\maketitle

\begin{center}
 Annotation.
\end{center}

{\it \small
    In the paper we study two characteristics $D^+ (A)$, $D^\times (A)$ of a set $A \subset \R$ which play important role in recent results concerning sum--product phenomenon.
    Also we obtain several variants and improvements of the Balog--Wooley decomposition theorem. In particular, we prove that any finite subset of $\R$ can be split into two sets with small quantities $D^+$ and $D^\times$.
}
\\
%\\
%\\

\section{Introduction}
\label{sec:introduction}

Let  $A,B\subset \R$ be finite sets.
Define the  \textit{sum set}, the \textit{product set} and \textit{quotient set} of $A$ and $B$ as
$$A+B:=\{a+b ~:~ a\in{A},\,b\in{B}\}\,,$$
$$AB:=\{ab ~:~ a\in{A},\,b\in{B}\}\,,$$
and
$$A/B:=\{a/b ~:~ a\in{A},\,b\in{B},\,b\neq0\}\,,$$
correspondingly.
The Erd\"{o}s--Szemer\'{e}di  conjecture \cite{ES} says that for any  $\epsilon>0$ one has
$$\max{\{|A+A|,|AA|\}}\gg{|A|^{2-\epsilon}} \,.$$
Roughly speaking, it asserts that an arbitrary subset of real numbers (or integers)
cannot has good additive and multiplicative structure, simultaneously.
Modern bounds concerning the conjecture can be found in
%The Erd\"{o}s--Szemer\'{e}di  conjecture is far from, see the best results in the direction in
\cite{soly}, \cite{KS1}, \cite{KS2}.

Define
$$
    \E^+(A,B) := |\{ a+b=a'+b' ~:~ a,a'\in A,\, b,b'\in B\} \,,
$$
and
$$
    \E^\times (A,B) := |\{ ab=a'b' ~:~ a,a'\in A,\, b,b'\in B\}
$$
be the {\it additive} and the {\it multiplicative common energies} of $A$ and $B$, correspondingly.
%If $A=B$ then we write $\E^+(A)$ for $\E^+(A,A)$ and, similarly, $\E^\times(A)$ for $\E^\times(A,A)$.
Numbers $\E^+(A,A)$, $\E^\times (A,A)$ are another measures to control the additivity and the multiplicativity of a set.

\bigskip

In \cite{BW} the following decomposition theorem was proved.

\begin{theorem}
    Let $A\subset \R$ be a finite set and $\d = 2/33$.
    Then there are two disjoint subsets $B$ and $C$ of $A$ such that $A = B\sqcup C$ and
\begin{equation}\label{f:BW_1}
    \max\{ \E^{+} (B,B), \E^{\times} (C,C)\} \ll |A|^{3-\delta} (\log |A|)^{1-\delta}
\end{equation}
    and
\begin{equation}\label{f:BW_2}
    \max\{ \E^{+} (B,C), \E^{\times} (B,C)\} \ll |A|^{3-\d/2} (\log |A|)^{(1-\d)/2} \,.
\end{equation}
\label{t:BW}
\end{theorem}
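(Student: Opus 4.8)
\noindent \emph{Proof strategy.}
The plan is to obtain the splitting by peeling off from $A$, one layer at a time, the part that is additively most structured and moving it into $C$; what remains in $B$ then has small additive energy by construction, while $C$ has small multiplicative energy because each layer removed, being additively structured, is forced to be multiplicatively unstructured by a sum--product estimate.

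The engine is an estimate of the following shape: there is an absolute $\sigma>0$ such that every finite $X\subset\R$ with $\E^{+}(X)\ge |X|^{3}/K$ contains a subset $X'$ with $|X'|\gg |X|/K^{O(1)}$, capturing a constant proportion of the additive quadruples of $X$, and satisfying
\begin{equation*}
    \E^{\times}(X') \lesssim |X'|^{3-\sigma}\,(\log|X'|)^{O(1)} \,.
\end{equation*}
Such $X'$ is produced by a Balog--Szemer\'edi--Gowers extraction (yielding $X'$ with $|X'+X'|\le K^{O(1)}|X'|$) followed by an incidence bound on the multiplicative energy of a set with small sumset (Solymosi, Konyagin--Shkredov, ultimately Szemer\'edi--Trotter). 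The value $\delta=2/33$ is what comes out when the best available such $\sigma$ is inserted and the exponents are optimised at the end.

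With this in hand I would run the iteration. Put $B^{(0)}=A$, $C^{(0)}=\emptyset$, fix $T\asymp|A|^{3-\delta}(\log|A|)^{1-\delta}$, and, given $B^{(i)}$ with $\E^{+}(B^{(i)})>T$, extract $S_i\subseteq B^{(i)}$ as above, set $B^{(i+1)}=B^{(i)}\setminus S_i$ and $C^{(i+1)}=C^{(i)}\cup S_i$, and stop once $\E^{+}(B^{(i)})\le T$, taking $B=B^{(i)}$ and $C=A\setminus B$. A pigeonhole/energy-reduction argument shows the additive energy of $B^{(i)}$ drops by a constant factor (up to logarithmic losses) at each round, so the process halts after $k=O(\log|A|)$ rounds, the $S_i$ pairwise disjoint. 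Then \eqref{f:BW_1} for $B$ is the stopping rule, while for $C=\bigsqcup_{i<k}S_i$ one uses that $r_{C/C}(\lambda)=\sum_{i,j<k}|S_i\cap\lambda S_j|$ together with repeated Cauchy--Schwarz to get $\E^{\times}(C)\lesssim k^{3}\sum_{i<k}\E^{\times}(S_i)\lesssim(\log|A|)^{O(1)}\sum_{i<k}|S_i|^{3-\sigma}\le(\log|A|)^{O(1)}|A|^{3-\sigma}$, the last step using $3-\sigma\ge1$ and $\sum_{i<k}|S_i|\le|A|$; balancing $\sigma$ against $T$ gives \eqref{f:BW_1} for $C$ as well. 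Finally \eqref{f:BW_2} is automatic from \eqref{f:BW_1}: by Cauchy--Schwarz $\E^{+}(B,C)\le\sqrt{\E^{+}(B,B)\,\E^{+}(C,C)}\le\sqrt{T\cdot|A|^{3}}$ and $\E^{\times}(B,C)\le\sqrt{\E^{\times}(B,B)\,\E^{\times}(C,C)}\le\sqrt{|A|^{3}\cdot T}$, both $\ll|A|^{3-\delta/2}(\log|A|)^{(1-\delta)/2}$.

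The main obstacle is the extraction step behind the displayed estimate: inside an additively rich set one must find a large subset that simultaneously keeps a constant fraction of the additive energy and is provably poor multiplicatively with a usable exponent --- this is precisely where the sum--product / incidence input is needed and where the numerical value of $\delta$ is pinned down. A secondary difficulty is the bookkeeping: checking that the additive energy genuinely decreases each round (so that only $O(\log|A|)$ rounds are needed) and that the various logarithmic losses assemble into the stated powers of $\log|A|$ rather than something larger.
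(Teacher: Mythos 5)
Theorem~\ref{t:BW} is not actually proved in this paper: it is quoted verbatim from Balog--Wooley \cite{BW}, and the paper's own contribution is the \emph{stronger} decomposition Theorem~\ref{t:BW_D} (in terms of $D^{+}$, $D^{\times}$, giving $\delta=2/5$ there and $\delta=1/5$ for energies), together with the observation that \eqref{f:BW_2} follows from \eqref{f:BW_1} by Cauchy--Schwarz. Your outline is therefore a reconstruction of the original \cite{BW} route rather than the route of this paper, and the two do differ in a structurally important place. In Theorem~\ref{t:BW_D} the iteration is driven by \emph{size}, not energy: whenever $D^{\times}(C_j)>|A|/M$ one uses Lemma~\ref{l:d_r} to peel off $A'\subseteq C_j$ with $|A'|\gtrsim|C_j|M^{-1/2}$ and $D^{+}(A')\lesssim M$, so the number of rounds is $k\lesssim M^{1/2}$ --- \emph{polynomial} in the threshold parameter, not $O(\log|A|)$ --- and the recombination of the pieces $D_1,\dots,D_k$ into a single $C$ with controlled $D^{+}$ is then done via the subadditivity Lemma~\ref{l:subadd_D}, which absorbs the $k^3$ loss by a dyadic grouping.

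This is exactly where your sketch has a genuine gap. Your engine step asserts that BSG extracts $X'\subseteq X$ that simultaneously has small sumset \emph{and} carries a constant fraction of the additive quadruples of $X$; the standard BSG conclusion gives only $|X'|\gg|X|/K^{O(1)}$ and $|X'+X'|\ll K^{O(1)}|X'|$, with no energy--capture guarantee, and passing from $B^{(i)}$ to $B^{(i)}\setminus S_i$ need not shrink $\E^{+}$ by a constant factor just because $|S_i|\gg|B^{(i)}|/K^{O(1)}$. Without energy capture the correct termination criterion is size depletion, giving $k\lesssim K^{O(1)}\log|A|$ rounds, i.e. $k$ polynomial in $|A|^{\delta}$ rather than $O(\log|A|)$. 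Your final estimate $\E^{\times}(C)\lesssim k^{3}\sum_i\E^{\times}(S_i)$ then acquires a polynomial $k^{3}$ prefactor, and the optimisation over $\delta$ (and hence the numerology $2/33$) has to be redone with that extra loss taken into account --- which is precisely the bookkeeping that \cite{BW} and, in its stronger form, the proof of Theorem~\ref{t:BW_D} actually carry out (the latter via Lemma~\ref{l:subadd_D}). So you should either supply a BSG variant with genuine energy capture, or accept the polynomial round count and rebalance the exponents; calling this a ``secondary difficulty'' understates it, since it is where the value of $\delta$ is determined. The deduction of \eqref{f:BW_2} from \eqref{f:BW_1} by Cauchy--Schwarz, on the other hand, is correct and is exactly the remark the paper makes after Theorem~\ref{t:construction_BW}.
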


Here and below we suppose that $|A|\ge2$.
All logarithms are base $2.$ Signs $\ll$ and $\gg$ are the usual Vinogradov's symbols.
     We will write $a \lesssim b$ or $b \gtrsim a$ if $a = O(b \cdot \log^c |A|)$, $c>0$.
     If $a \lesssim b$ and $b \lesssim a$ then we write $a\sim b$.

The results of such type are useful, see e.g. \cite{Hanson}
and will find further applications by the author's opinion.
Also it was proved in \cite{BW} that one cannot take $\d$
%less
greater
than $2/3$.
%The proof used the well--known Balog--Szemer\'{e}di--Gowers Theorem, see e.g. \cite{TV}.
%Theorem \ref{t:BW} is an important result in have found

In \cite{KS2}  a different method was  applied and it
%which
allows
%were able
to obtain an improvement.

\begin{theorem}
    Let $A\subset \R$ be a finite set and $\d = 1/5$.
    Then there are two disjoint subsets $B$ and $C$ of $A$ such that $A = B\sqcup C$ and
$$
    \max\{ \E^{+} (B,B), \E^{\times} (C,C)\} \lesssim  |A|^{3-\d} \,.
$$
\label{t:BW'}
\end{theorem}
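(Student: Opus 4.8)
The plan is to prove Theorem~\ref{t:BW'} by an iterative extraction argument whose engine is a one-sided sum--product estimate: a finite real set with large multiplicative energy must contain a large subset with small \emph{additive} energy, and symmetrically. The arithmetic heart, and the point where the exponent $\d=1/5$ enters, is a lemma of the following shape, to be extracted from the sum--product energy bounds of \cite{KS2}:
\begin{quote}
\emph{Key Lemma.} There is an absolute constant $c\in(0,1)$ such that every finite $A\subset\R$ with $|A|\ge2$ contains a nonempty $A'\subseteq A$ with $|A'|\ge c|A|$ and
$$\min\{\,\E^{+}(A',A'),\ \E^{\times}(A',A')\,\}\ \lesssim\ |A'|^{\,3-1/5}\,.$$
\end{quote}
Before using it, I would record the routine fact that for pairwise disjoint finite sets $X_1,\dots,X_k\subset\R$,
$$\E^{+}\Big(\bigsqcup_{j}X_j,\ \bigsqcup_{j}X_j\Big)\ \le\ k^{2}\sum_{i,j}\E^{+}(X_i,X_j)\ \le\ k^{2}\Big(\sum_{j}\E^{+}(X_j,X_j)^{1/2}\Big)^{2}\ \le\ k^{3}\sum_{j}\E^{+}(X_j,X_j)\,,$$
where the first and third inequalities are Cauchy--Schwarz and the middle one is the standard estimate $\E^{+}(X,Y)\le\E^{+}(X,X)^{1/2}\E^{+}(Y,Y)^{1/2}$; the same holds verbatim for $\E^{\times}$ (discard $0$ from the sets first, and replace sums by products throughout).

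Now I would iterate the Key Lemma. Put $A_0:=A$; given $A_i$ with $|A_i|\ge2$, choose $A_i'\subseteq A_i$ as in the lemma, place $A_i'$ into a family $\mathcal B$ if $\E^{+}(A_i',A_i')\lesssim|A_i'|^{3-1/5}$ and into $\mathcal C$ otherwise (in which case $\E^{\times}(A_i',A_i')\lesssim|A_i'|^{3-1/5}$), and continue with $A_{i+1}:=A_i\setminus A_i'$; if a single element is left over, put it into $\mathcal B$. Since $|A_{i+1}|\le(1-c)|A_i|$, this terminates after $k=O(\log|A|)$ steps and $A$ is the disjoint union of the extracted pieces. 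Set $B:=\bigsqcup_{X_j\in\mathcal B}X_j$ and $C:=\bigsqcup_{X_j\in\mathcal C}X_j$, so that $A=B\sqcup C$. The pieces $X_j$ are disjoint subsets of $A$ and $t\mapsto t^{3-1/5}$ is convex, hence $\sum_{j}|X_j|^{3-1/5}\le|A|^{3-1/5}$; together with the displayed disjoint-union bound this gives
$$\E^{+}(B,B)\ \le\ k^{3}\sum_{X_j\in\mathcal B}\E^{+}(X_j,X_j)\ \lesssim\ (\log|A|)^{3}\,|A|^{3-1/5}\ \lesssim\ |A|^{3-1/5}\,,$$
and, symmetrically, $\E^{\times}(C,C)\lesssim|A|^{3-1/5}$. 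This is Theorem~\ref{t:BW'} with $\d=1/5$; note that the $O(\log|A|)$ overhead of the iteration is harmless because it is swallowed by $\lesssim$.

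The genuine difficulty is the Key Lemma, and inside it the demand that the extracted $A'$ keep a \emph{constant} fraction of $A$. The crude route --- large $\E^{\times}(A)$ $\Rightarrow$ (Balog--Szemer\'edi--Gowers) a large subset with small product set $\Rightarrow$ (a sum--product inequality) a further subset with small $\E^{+}$ --- loses polynomial factors of $|A|$ at the Balog--Szemer\'edi--Gowers step and only yields $|A'|\ge|A|^{1-o(1)}$, which is too weak for the clean iteration above. Removing this loss is exactly what the quantities $D^{+}(A)$, $D^{\times}(A)$ are for: they encode ``$A$ has a large subset of small energy'' in a subset-free and essentially loss-free way, and the sharp sum--product estimates of \cite{KS2} in these terms supply the Key Lemma with both the constant fraction and the exponent $1/5$. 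Granting the lemma as stated, the rest is the bookkeeping above; a symmetric refinement also controlling $\E^{+}(B,C)$ and $\E^{\times}(B,C)$ in the spirit of~\eqref{f:BW_2} follows from the same scheme.
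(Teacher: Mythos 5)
Your proposal has a genuine gap, and it is precisely at the point you flag as "the genuine difficulty": the Key Lemma with a \emph{constant-fraction} extraction is assumed, not proved, and it cannot be ``extracted from the sum-product energy bounds of \cite{KS2}'' in the way you suggest. The tools from \cite{KS2} that you invoke --- the quantities $D^{+}$, $D^{\times}$ and Lemma~\ref{l:d_r} --- only allow one to pull out a subset whose size is a \emph{polynomial} fraction of the ambient set. Concretely, in the paper's proof of Theorem~\ref{t:BW_D}, if $D^{\times}(C')>|A|/M$ then one finds $A'\subseteq C'$ with $D^{+}(A')\lesssim M$ and $|A'|\gtrsim |C'|M^{-1/2}$; at the optimal $M=|A|^{2/5}$ this is $|A'|\gtrsim |C'|\,|A|^{-1/5}$, not $|A'|\geq c|C'|$. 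With polynomial loss per step the number of iterations is $k\lesssim |A|^{1/5}$, and your Cauchy--Schwarz recombination $\E^{+}(B,B)\leq k^{3}\sum_j \E^{+}(X_j,X_j)$ then costs $|A|^{3/5}$, which ruins the exponent. The paper circumvents exactly this by working in terms of $D^{+}$ all the way through the recombination: it uses a nontrivial subadditivity property of $D$ (Lemma~\ref{l:subadd_D}, applied with a dyadic pigeonholing on $D^{+}(D_i)$) to glue the polynomially many pieces without a $k^{3}$ loss, and only passes back to $\E^{+},\E^{\times}$ via Lemmas~\ref{l:gen_sigma}, \ref{l:gen_sigma_m} at the very end.

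There is a second, more structural objection: your Key Lemma is in fact \emph{equivalent} to Theorem~\ref{t:BW'}. If $A=B\sqcup C$ with $\E^{+}(B),\E^{\times}(C)\lesssim|A|^{3-1/5}$, then whichever of $B,C$ has size $\geq |A|/2$ is a constant-fraction subset witnessing the Key Lemma (with $c=1/2$); you show the converse implication yourself. So granting the Key Lemma does not reduce the problem --- it restates it. The iteration, the convexity estimate $\sum_j|X_j|^{3-1/5}\leq |A|^{3-1/5}$, and the $k^3$ Cauchy--Schwarz bound for disjoint unions are all correct, but they are the routine bookkeeping; the content of the theorem is entirely inside the lemma you defer, and the deferral cannot be discharged by the cited tools without redoing the $D^{+},D^{\times}$ argument of Theorem~\ref{t:BW_D}.
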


Actually, a more stronger result takes place.
Write
$$
    \E^{+}_3 (A) = | \{ a_1 - a'_1 = a_2 - a'_2 = a_3 - a'_3 ~:~ a_1,a'_1,a_2,a'_2,a_3,a'_3 \in A \}|
$$
and similar for $\E^{\times}_3 (A)$.
Using the fact that $(\E^{+}_3 (f))^{1/6}, (\E^{\times}_3 (f))^{1/6}$ are norms of a real function $f$
%and an integer  $k\ge 2$
(more precisely see section \ref{sec:norms}), we obtain

\begin{theorem}
    Let $A\subset \R$ be a finite set and $\d_1 = 2/5$.
    Then there
    are
    %exist
    %In particular, there are
    two disjoint subsets $B$ and $C$ of $A$ such that $A = B\sqcup C$ and
\begin{equation}\label{f:R_E_3_introduction}
    \max\{\E^{+}_3 (B), \E^{\times}_3 (C)\} \lesssim  |A|^{4-\d_1} \,,
\end{equation}
    Besides, inequality (\ref{f:R_E_3_introduction}) cannot holds with $\d_1$ greater than $3/4$.
\label{t:R_E_3_introduction}
\end{theorem}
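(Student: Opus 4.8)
The plan is to prove the upper bound in (\ref{f:R_E_3_introduction}), with $\d_1=2/5$, by an iterative ``peeling'' procedure whose bookkeeping is carried by the triangle inequality for the norms $N^{+}(f):=(\E^{+}_3(f))^{1/6}$ and $N^{\times}(f):=(\E^{\times}_3(f))^{1/6}$ (established in section~\ref{sec:norms}), and to read off the optimality assertion $\d_1\le 3/4$ from an explicit extremal set. The underlying mechanism: a set whose additive third energy is large contains, by a quantitative sum--product estimate, a large subset which is additively responsible for that energy and is therefore multiplicatively structureless; peeling all such subsets off and collecting them leaves behind a set with small $\E^{+}_3$, while the collected union has small $\E^{\times}_3$ because $N^{\times}$ is a norm.

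\medskip

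\textbf{The decomposition.} If $\E^{\times}_3(A)\lesssim|A|^{4-\d_1}$, take $C=A$, $B=\emptyset$; if $\E^{+}_3(A)\lesssim|A|^{4-\d_1}$, take $B=A$, $C=\emptyset$. So assume both energies exceed the target. Put $B_0=A$, $C_0=\emptyset$, and while $\E^{+}_3(B_j)>|A|^{4-\d_1}$ perform the following step: extract $A_{j+1}\subseteq B_j$ with $\E^{+}_3(B_{j}\setminus A_{j+1})\le\tfrac12\,\E^{+}_3(B_j)$ which, by the sum--product input, has $\E^{\times}_3(A_{j+1})$ well below $|A|^{4-\d_1}$; set $B_{j+1}=B_j\setminus A_{j+1}$ and $C_{j+1}=C_j\sqcup A_{j+1}$. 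Since $\E^{+}_3(A)\le|A|^{4}$ and each step halves $\E^{+}_3(B_j)$, after $T=O(\log|A|)$ steps one has $\E^{+}_3(B_T)\le|A|^{4-\d_1}$; set $B=B_T$ and $C=C_T=\bigsqcup_{j=1}^{T}A_j$. The bound for $B$ is the stopping rule (and discarding elements never increases $\E^{+}_3$, since $r_{(B\setminus A')-(B\setminus A')}\le r_{B-B}$ pointwise). For $C$ the norm does the work: $\mathbf{1}_C=\sum_{j=1}^{T}\mathbf{1}_{A_j}$, whence $N^{\times}(\mathbf{1}_C)\le\sum_{j=1}^{T}N^{\times}(\mathbf{1}_{A_j})$ and so $\E^{\times}_3(C)\le T^{6}\max_{j}\E^{\times}_3(A_j)\lesssim(\log|A|)^{6}\cdot|A|^{4-\d_1}$, which is $\lesssim|A|^{4-\d_1}$.

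\medskip

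\textbf{Main obstacle.} The single delicate ingredient is the extraction step: inside a set $X$ with $\E^{+}_3(X)$ large one must find $X'\subseteq X$ that simultaneously (i) accounts for a positive proportion of $\E^{+}_3(X)$, so that the residual additive third energy drops geometrically, and (ii) is multiplicatively structureless, with a \emph{quantitatively tight} bound on $\E^{\times}_3(X')$. Part (i) is handled by passing to the popular differences of $X$ and invoking a Balog--Szemer\'edi--Gowers/Freiman-type statement; part (ii) is the sum--product estimate, phrased through the quantities $D^{+}$, $D^{\times}$. The step can be run only while $\E^{+}_3(B_j)>|A|^{4-\d_1}$, and the threshold one reaches here — obtained by balancing the exponent afforded by (ii) against the loss in (i) — is exactly $\d_1=2/5$; any sharpening of this estimate improves $\d_1$. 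Passing instead from $\E^{\pm}_3$ to the second energies $\E^{\pm}$ by H\"older, applying Theorem~\ref{t:BW'}, and returning loses too much in the H\"older steps, which is why an $\E_3$-native argument — and hence the norm property — is needed.

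\medskip

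\textbf{Optimality.} To see that $\d_1$ cannot exceed $3/4$ it suffices to exhibit one set $A$ such that every partition $A=B\sqcup C$ satisfies $\max\{\E^{+}_3(B),\E^{\times}_3(C)\}\gtrsim|A|^{13/4}$. From the trivial bounds $\E^{+}_3(S)\ge|S|^{6}/|S-S|^{2}$ and $\E^{\times}_3(S)\ge|S|^{6}/|S/S|^{2}$ one gets, for any partition, $|B|\le(\E^{+}_3(B))^{1/6}|A-A|^{1/3}$ and $|C|\le(\E^{\times}_3(C))^{1/6}|A/A|^{1/3}$; adding these and using $|A|=|B|+|C|$ gives
\[
\max\{\E^{+}_3(B),\,\E^{\times}_3(C)\}\ \gg\ \frac{|A|^{6}}{\max\{|A-A|,\,|A/A|\}^{2}}\,.
\]
Thus it is enough to produce an explicit set $A$ with $\max\{|A-A|,|A/A|\}\lesssim|A|^{11/8}$ — a set that is ``half arithmetic and half geometric'', analogous to the extremal examples behind Theorems~\ref{t:BW} and~\ref{t:BW'} — the exponent $11/8=(6-13/4)/2$ being precisely what returns $\d_1\le 3/4$; with a set attaining $\max\{|A-A|,|A/A|\}\lesssim|A|^{4/3}$ the same computation would yield the stronger barrier $\d_1\le2/3$.
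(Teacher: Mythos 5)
Your overall plan for the upper bound --- peel off pieces, collect them, and use the triangle inequality for $(\E^\times_3)^{1/6}$ to control the union --- is the same general strategy the paper follows (cf.\ the sketch in Theorem~\ref{t:R_E_3} and its backbone Theorem~\ref{t:BW_D}), and Proposition~\ref{p:E_k-norm} is indeed the right tool for the collection step. But the quantitative work is not there. The extraction step you pose --- find $A_{j+1}\subseteq B_j$ halving $\E_3^+(B_j)$ while $\E_3^\times(A_{j+1})\lesssim|A|^{4-\d_1}$ --- is asserted, not proved, and the halving clause smuggles in a much stronger conclusion than the machinery delivers: in the paper what one actually extracts is a piece $D_j$ with $\E^+_3(D_j)\lesssim M|A|^{-2}|D_j|^5$ and $|D_j|\gtrsim|A|M^{-1/2}$ (obtained via $D^\times$, a popular dilation set $S_\tau$, pigeonholing, and Lemma~\ref{l:d_r}); the iteration runs for $k\lesssim M^{1/2}\sim|A|^{1/5}$ steps, polynomially many rather than $O(\log|A|)$; and the sum $\sum_j(\E^+_3(D_j))^{1/6}$ has to be handled by a weighted H\"older argument exploiting $\sum_j|D_j|\le|A|$. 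Your cruder bookkeeping $\E_3^\times(C)\le T^6\max_j\E_3^\times(A_j)$ would not survive that iteration count, and invoking a Balog--Szemer\'edi--Gowers step (which the paper does not use) would cost polynomial losses that destroy the exponent $2/5$.

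The lower bound argument is where the proposal genuinely breaks. You reduce the barrier $\d_1\le3/4$ to exhibiting a set with $\max\{|A-A|,|A/A|\}\lesssim|A|^{11/8}$, but never construct one, and no such construction appears in the paper or, as far as I am aware, anywhere else; it would be a notable sum--product example in its own right. The set the paper actually uses (Theorem~\ref{t:construction_BW}) is $A=PG$ with $|P|\sim N^{3/4}$ consecutive primes and $|G|\sim N^{1/4}$ a geometric progression, and for it $|A/A|\sim N^{7/4}$, far above $N^{11/8}$; plugged into your chain of inequalities this only gives $\E^\times_3(C)\gg N^{5/2}$, which is weaker than the trivial bound $\E^\times_3(C)\ge|C|^3$. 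The reason the paper nevertheless reaches $N^{13/4}$ is that it uses the sharper inequality (\ref{f:E3_CS}) together with Lemma~\ref{l:D(A)}: on the additive side it splits $B$ into the dilates $B_j=B\cap(P\cdot2^j)$, each of which individually has $|B_j+B_j|\lesssim|P|$ even though $|A-A|$ is huge, and then sums $\E^+_3(B_j)$ using Cauchy--Schwarz; on the multiplicative side it bounds $|A^2\cdot\D(A)|=\sum_{x\in A/A}|AA_x|\ll N^3/K$, which is much smaller than $|A/A|^2$. Your denominators $|S-S|^2$ and $|S/S|^2$ are blind to this local structure, so the reduction cannot be salvaged with this example, and you have supplied no alternative.
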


Using the H\"{o}lder inequality
$$
    (\E^{+} (A,A))^2 \le \E^{+}_3 (A) |A|^2,\,
        \quad \quad
            (\E^{\times} (A,A))^2 \le \E^{\times}_3 (A) |A|^2 \,,
$$
it is easy to see that Theorem \ref{t:R_E_3_introduction} implies Theorem \ref{t:BW'}.

\bigskip

Actually, our proof allows to say much more about the sets $B,C$ than
%gives us the last result.
 is written in
 %the last result.
 Theorem \ref{t:R_E_3_introduction}.
 We consider two quantities $D^{+}, D^{\times}$ (see the definitions in section
 %\ref{sec:definitions}
 \ref{sec:preliminaries}) and prove the strongest decomposition result.

\begin{theorem}
    Let $A\subset \R$ be a finite set and $\d_2 = 2/5$.
    Then there are two disjoint subsets $B$ and $C$ of $A$ such that $A = B\sqcup C$ and
\begin{equation}\label{f:BW_D_introduction}
    \max\{ D^{+} (B), D^{\times} (C)\} \lesssim  |A|^{1-\d_2} \,.
\end{equation}
    Besides, inequality (\ref{f:BW_D_introduction}) cannot holds with $\d_2$ greater than $3/4$.
\label{t:BW_D_introduction}
\end{theorem}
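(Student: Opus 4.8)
The plan is to reduce Theorem~\ref{t:BW_D_introduction} to the energy version, Theorem~\ref{t:R_E_3_introduction}, by unwinding the definitions of $D^{+}$ and $D^{\times}$. Recall that $D^{+}(B)$ is defined (in section~\ref{sec:preliminaries}) as a normalised minimum over auxiliary sets of a ratio of the form $\frac{\E^{+}_3(B,Q)}{|B||Q|^{?}}$ (or the corresponding quantity built from the norm $(\E^{+}_3)^{1/6}$), so that bounding $D^{+}(B)$ amounts to bounding $\E^{+}_3(B)$ together with a witness for the optimising set; likewise for $D^{\times}(C)$. Thus the first step is to record the two elementary inequalities relating $D^{+}(B)$ to $\E^{+}_3(B)$ (and $D^{\times}(C)$ to $\E^{\times}_3(C)$), namely that $D^{+}(B) \lesssim \E^{+}_3(B)/|B|^{?}$ in one direction, so that the estimate $\max\{\E^{+}_3(B),\E^{\times}_3(C)\}\lesssim |A|^{4-\d_1}$ with $\d_1 = 2/5$ immediately yields $\max\{D^{+}(B),D^{\times}(C)\}\lesssim |A|^{3-\d_1}/|A|^{2} = |A|^{1-\d_2}$ with $\d_2 = \d_1 = 2/5$.

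However, the cleaner and more robust route — and the one I expect the paper actually takes — is to rerun the proof of Theorem~\ref{t:R_E_3_introduction} \emph{at the level of the norms} rather than only at the level of $\E^{+}_3,\E^{\times}_3$. Concretely: since $f\mapsto (\E^{+}_3(f))^{1/6}$ and $f\mapsto(\E^{\times}_3(f))^{1/6}$ are genuine norms on real functions (section~\ref{sec:norms}), one has a dyadic pigeonholing / weighted decomposition argument that takes the characteristic function $\mathbf 1_A$ and splits it, but one keeps track not of the global energy but of the local contribution around each point, which is precisely what $D^{+},D^{\times}$ measure. So the second step is: set up the greedy/iterative partition exactly as in Theorem~\ref{t:R_E_3_introduction} — at each stage, look at whichever of the additive or multiplicative ``badness'' is larger at the current popular scale, and move those elements into $B$ or $C$ accordingly — but phrase the stopping condition and the accounting in terms of $D^{+}$ and $D^{\times}$. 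The triangle inequality for the norms is what makes the pieces assemble back correctly, and the exponent $4-\d_1 = 18/5$ for $\E_3$ becomes the exponent $1-\d_2 = 3/5$ for $D$ after dividing by the two ``free'' factors of $|A|$ that distinguish $\E_3$ from $D$.

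The third step is the lower bound: to see that $\d_2$ cannot exceed $3/4$, I would reuse the extremal example behind the bound $\d_1 \le 3/4$ in Theorem~\ref{t:R_E_3_introduction}. That example is a set $A$ (typically a geometric-progression-like or $\{1,2,\dots\}\cdot\{1,2,\dots\}$-type configuration, or a random subset of such) for which every large subset $B$ has $\E^{+}_3(B)\gtrsim |B|^{4 - 3/4}$ and simultaneously every large subset $C$ has $\E^{\times}_3(C)\gtrsim |C|^{4-3/4}$; one checks that for this same $A$, $D^{+}(B)\gtrsim |B|^{1-3/4}$ and $D^{\times}(C)\gtrsim |C|^{1-3/4}$ for all subsets $B,C$ partitioning it, since $D^{+}$ is bounded below by $\E^{+}_3$ divided by the trivial factors. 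Hence no partition can make $\max\{D^{+}(B),D^{\times}(C)\}$ smaller than $|A|^{1-3/4}$ up to logarithms.

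The main obstacle I anticipate is not any single hard estimate but the bookkeeping in the second step: making sure that the quantity $D^{+}$ (which is itself an infimum over auxiliary sets, i.e.\ already an optimised object) behaves well under the iterative removal of elements — in particular that removing the ``multiplicatively bad'' part of $A$ does not secretly inflate $D^{+}$ of what remains. This is exactly where the norm property $(\E^{+}_3)^{1/6}$ is essential: it gives subadditivity $D^{+}(X\cup Y)^{1/6} \le D^{+}(X)^{1/6} + D^{+}(Y)^{1/6}$-type control (or the analogous statement with the correct normalisation), so that the error terms accumulated over the $O(\log|A|)$ stages of the iteration only cost logarithmic factors, which is why the conclusion is stated with $\lesssim$ rather than $\ll$. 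Once that subadditivity is in hand, the rest is the same dyadic pigeonhole used for Theorem~\ref{t:R_E_3_introduction}, and the numerology $\d_2 = 2/5$ falls out identically.
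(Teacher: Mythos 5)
Your first step goes in the wrong direction. The relationship between $D^{+}$ and $\E^{+}_3$ proved in the paper (via Lemma~\ref{l:D_q} and~(\ref{f:q_E_3_trivial})) is
$$|A|^2 (D^{+}(A))^2 \le \E^{+}_3(A) \lesssim D^{+}(A)|A|^3,$$
so an \emph{upper} bound on $\E^{+}_3(B)$ gives only $D^{+}(B)\lesssim (\E^{+}_3(B))^{1/2}/|B|$, which with $\E^{+}_3(B)\lesssim |A|^{18/5}$ and $|B|\le|A|$ yields at best $D^{+}(B)\lesssim |A|^{4/5}$ — weaker than the claimed $|A|^{3/5}$, and even that only when $B$ is nearly all of $A$. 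The implication genuinely runs the other way: the paper proves the $D$-version (Theorem~\ref{t:BW_D}) first and \emph{deduces} the $\E_3$-version from it using Lemmas~\ref{l:gen_sigma},~\ref{l:gen_sigma_m}. Your ``immediate'' reduction from $\E_3$ to $D$ in the first paragraph does not exist, and the statement ``$D^{+}(B)\lesssim \E^{+}_3(B)/|B|^{?}$'' that you hope for is the reverse of what is true: $\E^{+}_3(B)/|B|^3$ is a \emph{lower} bound for $D^{+}(B)$, not an upper bound.

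Your second step correctly senses that one must work directly with $D^{+}$, $D^{\times}$, but you miss the essential extraction mechanism. When $D^{\times}(C_j)$ is large, the paper does not merely pigeonhole dyadically: it uses the definition of $D^{\times}$ to produce a witness set $G$ and a threshold $\tau$ so that $|S_\tau(G,C_j)|$ is large, pigeonholes to find $A'\subseteq C_j$ lying inside a symmetric set $\Sym^{\times}_q(S_\tau,G)$, and then invokes Lemma~\ref{l:d_r} (the Szemer\'edi--Trotter input, through $d^{+}$) to conclude $D^{+}(A')\ll |S_\tau|^2|G|^2/(q^3|A'|)\lesssim M|A'|^2/|C_j|^2$. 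This is the crux — multiplicative ``badness'' of $C_j$ forces an extractable piece $A'$ with small $D^{+}(A')$ — and your proposal never supplies it. Also, the aggregation at the end is not done via a norm-like subadditivity $(D^{+})^{1/6}$ as you conjecture; the paper's Lemma~\ref{l:subadd_D} states $D(A_1\cup\dots\cup A_k)\lesssim k^3\cdot\frac1{|A|}\sum_j D(A_j)|A_j|$, a weighted sum with a polynomial dependence on the number of pieces, which is controlled by a second dyadic decomposition according to the sizes of $D^{+}(D_i)$. The norm property of $(\E_3)^{1/6}$ (Proposition~\ref{p:E_k-norm}) is used only in the \emph{alternative} proof of the $\E_3$-version in Section~\ref{sec:proof1}, not here. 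Your step~3 (lower bound from the extremal example, passing from $\E_3$ to $D$ via $D\gtrsim \E_3/|A|^3$) is in the right spirit and matches the paper's Theorem~\ref{t:construction_BW}.
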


It is easy to
%see
%(see below)
check
that Theorem \ref{t:BW_D_introduction} implies both Theorem \ref{t:BW'}, \ref{t:R_E_3_introduction}
(see discussion in section \ref{sec:proof}).
The quantities $D^{+}, D^{\times}$ play an important role in additive combinatorics,
  see e.g. \cite{s_sumsets}, \cite{KS1}, \cite{KS2}. For example, studying the characteristics of a set allows us to improve the famous Solymosi $4/3$ result, see \cite{soly}.

%\bigskip

Also, in section \ref{sec:proof} we
obtain
%prove
several other forms of the Balog--Wooley Theorem, study quantities $D^+ (A)$, $D^\times (A)$
and
%as well as
find some applications to sum--product questions, see e.g. Theorem \ref{t:dd} below.

We are going to obtain similar results in $\F_p$ in a forthcoming paper.

The author is grateful to Sergey Konyagin
%and Misha Rudnev
for useful discussions.
%and, especially, Tomasz Schoen for very useful and fruitful explanations and  discussions.

\section{Notation}
\label{sec:definitions}

Let $\Gr$ be an abelian group.
If $\Gr$ is finite then denote by $N$ the cardinality of $\Gr$.
It is well--known~\cite{Rudin_book} that the dual group $\FF{\Gr}$ is isomorphic to $\Gr$ in the case.
Let $f$ be a function from $\Gr$ to $\mathbb{C}.$  We denote the Fourier transform of $f$ by~$\FF{f},$
\begin{equation}\label{F:Fourier}
  \FF{f}(\xi) =  \sum_{x \in \Gr} f(x) e( -\xi \cdot x) \,,
\end{equation}
where $e(x) = e^{2\pi i x}$
and $\xi$ is a homomorphism from $\FF{\Gr}$ to $\R/\Z$ acting as $\xi : x \to \xi \cdot x$.
We rely on the following basic identities
\begin{equation}\label{F_Par}
    \sum_{x\in \Gr} |f(x)|^2
        =
            \frac{1}{N} \sum_{\xi \in \FF{\Gr}} \big|\widehat{f} (\xi)\big|^2 \,,
\end{equation}
\begin{equation}\label{svertka}
    \sum_{y\in \Gr} \Big|\sum_{x\in \Gr} f(x) g(y-x) \Big|^2
        = \frac{1}{N} \sum_{\xi \in \FF{\Gr}} \big|\widehat{f} (\xi)\big|^2 \big|\widehat{g} (\xi)\big|^2 \,,
\end{equation}
and
\begin{equation}\label{f:inverse}
    f(x) = \frac{1}{N} \sum_{\xi \in \FF{\Gr}} \FF{f}(\xi) e(\xi \cdot x) \,.
\end{equation}
If
\begin{equation}\label{f:convolutions}
    (f*g) (x) := \sum_{y\in \Gr} f(y) g(x-y) \quad \mbox{ and } \quad
        %(f\circ g) (x) := \sum_{y\in \Gr} \ov{f(y)} g(y+x)
        (f\circ g) (x) := \sum_{y\in \Gr} f(y) g(y+x)
\end{equation}
 then
\begin{equation}\label{f:F_svertka}
    \FF{f*g} = \FF{f} \FF{g} \quad \mbox{ and } \quad \FF{f \circ g} = \FF{f^c} \FF{g} = \ov{\FF{\ov{f}}} \FF{g} \,,
    %(\F{fg}) (x) = \frac{1}{N} (\F{f} * \F{g}) (x) \,.
\end{equation}
where for a function $f:\Gr \to \mathbb{C}$ we put $f^c (x):= f(-x)$.
 Clearly,  $(f*g) (x) = (g*f) (x)$ and $(f\c g)(x) = (g \c f) (-x)$, $x\in \Gr$.
 The $k$--fold convolution, $k\in \N$  we denote by $*_k$,
 so $*_k := *(*_{k-1})$.

%We conclude with few comments regarding the notation used in this paper.
%Finally, with a slight abuse of notation
In the paper we use the same letter to denote a set $S\subseteq \Gr$
and its characteristic function $S:\Gr \rightarrow \{0,1\}.$
By $|S|$ denote the cardinality of $S$.
For a positive integer $n,$ we set $[n]=\{1,\ldots,n\}.$

%Write
Put
$\E^{+}(A,B)$ for the {\it additive energy} of two sets $A,B \subseteq \Gr$
(see e.g. \cite{TV}), that is
$$
    \E^{+} (A,B) = |\{ a_1+b_1 = a_2+b_2 ~:~ a_1,a_2 \in A,\, b_1,b_2 \in B \}| \,.
$$
If $A=B$ we simply write $\E^{+} (A)$ instead of $\E^{+} (A,A).$
Clearly,
\begin{equation*}\label{f:energy_convolution}
    \E^{+} (A,B) = \sum_x (A*B) (x)^2 = \sum_x (A \circ B) (x)^2 = \sum_x (A \circ A) (x) (B \circ B) (x)
    \,.
\end{equation*}
Note also that
\begin{equation}\label{f:E_CS}
    \E^{+} (A,B) \le \min \{ |A|^2 |B|, |B|^2 |A|, |A|^{3/2} |B|^{3/2} \} \,,
\end{equation}
and by the Cauchy--Schwarz inequality
\begin{equation}\label{f:E_CS'}
    \E^{+} (A,B) \ge \frac{|A|^2 |B|^2}{|A\pm B|} \,.
\end{equation}
In the same way define the {\it multiplicative energy} of two sets $A,B \subseteq \Gr$
$$
    \E^{\times} (A,B) = |\{ a_1 b_1 = a_2 b_2 ~:~ a_1,a_2 \in A,\, b_1,b_2 \in B \}| \,.
$$
Certainly, multiplicative energy $\E^{\times} (A,B)$ can be expressed in terms of multiplicative convolutions,
similar to (\ref{f:convolutions}).
Let also
$$
    \sigma_k (A) := (A*_k A)(0)=| \{ a_1 + \dots + a_k = 0 ~:~ a_1, \dots, a_k \in A \} | \,.
$$

\bigskip

% For a sequence $s=(s_1,\dots, s_{k-1})$ put
%$A^B_s = B \cap (A-s_1)\dots \cap (A-s_{k-1}).$
%If $B=A$ then write $A_s$ for $A^A_s$.
Now more generally, let
\begin{equation}\label{f:E_k_preliminalies}
    \E_k(A)=\sum_{x\in \Gr} (A\c A)(x)^k  = \sum_{x\in \Gr} |A_x|^k
        = |\{ a_1 - a'_1 = \dots = a_k - a'_k ~:~ a_j, a'_j \in A \}| \,,
    %= \sum_{s_1,\dots,s_{k-1} \in \Gr} |A_s|^2
\end{equation}
and
\begin{equation}\label{f:E_k_preliminalies_B}
\E_k(A,B)=\sum_{x\in \Gr} (A\c B)^k (x) = |\{ a_1 - b_1 = \dots = a_k - b_k ~:~ a_j \in A,\, b_j \in B \}|
%(B\c B)(x)^{k-1} = \sum_{s_1,\dots,s_{k-1} \in \Gr} |B^A_s|^2
\end{equation}
be the {\it higher energies} of $A$ and $B$, see \cite{SS1}.
Here $A_x= A\cap (A-x)$, $x \in A-A$.
%The second formulas in (\ref{f:E_k_preliminalies}), (\ref{f:E_k_preliminalies_B})
%can be considered as the definitions of $\E_k(A)$, $\E_k(A,B)$ for non integer $k$, $k\ge 1$.
Similarly, we write $\E_k(f,g) = \sum_x (\ov{f} \c g)^k (x)$ for any complex functions $f$, $g$
and even more generally, we consider
$$
    \E_k (f_1,\dots,f_{k}) = \sum_x (\ov{f}_1 \c f_1) (x) \dots (\ov{f}_k \c f_k) (x) \,.
$$

There is a simple connection between $\E_k$ and $\E_2$ energies.
Indeed,
%Clearly,
\begin{eqnarray}\label{f:energy-B^k-Delta}
\E_{k+1}(A) = \E(\Delta_k (A),A^{k}) \,,
 \end{eqnarray}
where
$$
    \Delta (A) = \Delta_k (A) := \{ (a,a, \dots, a)\in A^k \}\,.
$$
Identity (\ref{f:energy-B^k-Delta}) allows us to use lower bound (\ref{f:E_CS'}) to estimate higher energies $\E^{+}_k$, $\E^\times_k$.
In particular,
\begin{equation}\label{f:E3_CS}
    |A|^6 \le \E^{+}_3 (A) |A^2 \pm \D(A)|
\end{equation}
and similar for $\E^\times_3 (A)$.
%We also put $\Delta(x) = \Delta (\{ x \})$, $x\in \Gr$.
%Note that $B\subseteq A$ iff $\D (B) \subseteq \D (A)$.

%We shall write $\sum_x$ and $\sum_{\xi}$ instead of $\sum_{x\in \Gr}$ and $\sum_{\xi \in \FF{\Gr}}$ for simplicity.

\bigskip

Quantities $\E_k (A,B)$ can be written in terms of generalized convolutions.
Let $f_1,\dots,f_{k+1} : \Gr \to \C$  be functions
and put $F = (f_1,\dots,f_{k+1}) : \Gr^{k+1} \to \C$.
% be another  function.
Denote by
$$ \Cf_k(F) (x) = \Cf_{k+1} (f_1,\dots,f_{k+1}) (x_1,\dots, x_{k})$$
the function
$$
    \Cf_k(F) (x) =
    \Cf_{k+1} (f_1,\dots,f_{k+1}) (x_1,\dots, x_{k}) = \sum_z f_1 (z) f_2 (z+x_1) \dots f_{k+1} (z+x_{k}) \,.
$$
Thus, $\Cf_2 (f_1,f_2) (x) = (f_1 \circ f_2) (x)$.
%Put $C_1 (f) = \sum_z f(z)$.
If $f_1=\dots=f_{k+1}=f$ then write
$\Cf_{k+1} (f) (x_1,\dots, x_{k})$ for $\Cf_{k+1} (f,\dots,f_{}) (x_1,\dots, x_{k})$.

\bigskip

Then (see Lemma \ref{l:commutative_C} below) the following holds
$$
    \E_{k+1} (A,B) = \sum_{x_1,\dots,x_k} \Cf_{k+1} (A) (x_1,\dots,x_k) \Cf_{k+1} (B) (x_1,\dots,x_k) \,.
$$

\section{Preliminaries}
\label{sec:preliminaries}

We begin with a
%general
lemma from \cite{s_energy} concerning "commutativity"\, of generalized convolution.

\begin{lemma}
    For any functions $f_i, g_j : \Gr \to \C$ the following holds
$$
    \sum_{x_1,\dots, x_{l-1}} \Cf_l (f_0,\dots,f_{l-1}) (x_1,\dots, x_{l-1})\, \Cf_l (g_0,\dots,g_{l-1}) (x_1,\dots, x_{l-1})
        =
$$
\begin{equation}\label{f:scalar_C}
        =
        \sum_z (f_0 \circ g_0) (z) \dots (f_{l-1} \circ g_{l-1}) (z) \quad \quad  \mbox{\bf (scalar product), }
\end{equation}
moreover
$$
    \sum_{x_1,\dots, x_{l-1}} \Cf_l (f_0) (x_1,\dots, x_{l-1}) \dots  \Cf_l (f_{k-1}) (x_1,\dots, x_{l-1})
        =
$$
\begin{equation}\label{f:gen_C}
        =
            \sum_{y_1,\dots,y_{k-1}} \Cf^l_k (f_0,\dots,f_{k-1}) (y_1,\dots,y_{k-1})
                \quad \quad  \mbox{\bf (multi--scalar product), }
\end{equation}
    and
%moreover
$$
    \sum_{x_1,\dots, x_{l-1}} \Cf_l (f_0) (x_1,\dots, x_{l-1})\, (\Cf_l (f_1) \circ \dots \circ \Cf_l (f_{k-1})) (x_1,\dots, x_{l-1})
        =
$$
\begin{equation}\label{f:conv_C}
        =
            \sum_{z} (f_0 \circ \dots \circ f_{k-1})^l (z)
                \quad \quad  \mbox{\bf (} \sigma_{k} \quad \mbox{\bf for } \quad \Cf_l \mbox{\bf )}  \,.
\end{equation}
\label{l:commutative_C}
\end{lemma}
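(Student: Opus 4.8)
The plan is to prove all three identities by unfolding the definition of $\Cf_l$ and carefully reindexing summations; the content is entirely bookkeeping with the generalized convolution, so the proof is a direct computation rather than an application of any deeper principle.

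First I would establish \eqref{f:scalar_C}. Writing out both $\Cf_l$ factors by their definition, the left-hand side becomes
$$
    \sum_{x_1,\dots,x_{l-1}} \sum_{z} f_0(z) f_1(z+x_1)\cdots f_{l-1}(z+x_{l-1}) \sum_{w} g_0(w) g_1(w+x_1)\cdots g_{l-1}(w+x_{l-1}) \,.
$$
Now I interchange the order of summation, summing over $x_1,\dots,x_{l-1}$ last. For each fixed pair $z,w$, the sum over $x_j$ separates into a product over $j=1,\dots,l-1$ of $\sum_{x_j} f_j(z+x_j) g_j(w+x_j)$, and each such sum equals $(f_j \circ g_j)(z-w)$ after the substitution $y = z+x_j$ (so $w+x_j = y + (w-z)$). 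Then one is left with $\sum_{z,w} f_0(z) g_0(w) \prod_{j=1}^{l-1} (f_j \circ g_j)(z-w)$; setting $t = z - w$ and noting $\sum_{z} f_0(z) g_0(z-t) = (f_0 \circ g_0)(-t)$ — or more cleanly substituting to land exactly on the stated form — collapses this to $\sum_{t} \prod_{j=0}^{l-1} (f_j \circ g_j)(t)$ up to a harmless sign reflection that I would absorb into the indexing convention. The only subtlety is keeping the $\circ$-versus-$*$ orientation consistent, which \eqref{f:convolutions} pins down.

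For \eqref{f:gen_C}, I would iterate the same idea: the left-hand side is a $k$-fold product of $\Cf_l(f_i)$-functions, each expanded as a sum over an internal variable $z_i$. Interchanging so that $x_1,\dots,x_{l-1}$ are summed last, each $x_j$-sum factors as $\sum_{x_j} \prod_{i=0}^{k-1} f_i(z_i + x_j)$, which by definition is exactly $\Cf_k(f_0,\dots,f_{k-1})(z_1 - z_0, \dots, z_{k-1} - z_0)$ evaluated at the appropriate shifts — so performing the $x_j$-sum for each of the $l-1$ values of $j$ produces $\Cf_k^{l}$ of those same $k-1$ difference arguments, and summing over the remaining free variables $z_0,\dots,z_{k-1}$ (repackaged as $y_1,\dots,y_{k-1}$ plus a translation-invariant redundant variable) gives the right-hand side. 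Identity \eqref{f:conv_C} is the special case where one of the $\Cf_l$ factors is singled out and the rest are convolved together: expanding $(\Cf_l(f_1)\circ\cdots\circ\Cf_l(f_{k-1}))$ via \eqref{f:convolutions}, pairing it against $\Cf_l(f_0)$, and applying the same reindexing turns the $x$-sums into $(f_0 \circ \cdots \circ f_{k-1})^l$ summed over one variable.

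The main obstacle — really the only place to be careful — is the combinatorics of the index substitutions: tracking which differences $z_i - z_0$ appear, ensuring the translation-invariance (one redundant internal variable) is accounted for so that the number of free summation variables matches on both sides, and keeping the reflection $f^c(x) = f(-x)$ straight throughout. Once the substitution pattern from \eqref{f:scalar_C} is set up correctly, \eqref{f:gen_C} and \eqref{f:conv_C} follow by the same manipulation applied $k$ times, so I would present \eqref{f:scalar_C} in full detail and then indicate the iteration briefly.
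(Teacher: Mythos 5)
The paper itself does not prove this lemma — it is stated as a citation from \cite{s_energy} — so there is no in-paper argument to compare against, but your approach (unfold the definition of $\Cf_l$, exchange the order of summation, and reindex) is the natural and essentially only way to prove identities of this kind, and it is correct. Two small comments. For \eqref{f:scalar_C} the sign slip you flag is in fact not there if you track it carefully: with $u=z+x_j$ one gets $\sum_{x_j} f_j(z+x_j)\,g_j(w+x_j)=(f_j\circ g_j)(w-z)$, not $(f_j\circ g_j)(z-w)$, and with $t=w-z$ the $j=0$ factor is $\sum_z f_0(z)g_0(z+t)=(f_0\circ g_0)(t)$ at the same argument $t$, so everything lines up without any reflection. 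Your treatment of \eqref{f:gen_C} correctly identifies the crucial structure: after Fubini, each $x_j$-sum for $j=1,\dots,l-1$ produces one factor of $\Cf_k(f_0,\dots,f_{k-1})$ evaluated at $(z_1-z_0,\dots,z_{k-1}-z_0)$, while the leftover product $\prod_i f_i(z_i)$ (the ``$x_0=0$'' slot) supplies the $l$-th copy once you substitute $y_i=z_i-z_0$; you do gesture at this with ``plus a translation-invariant redundant variable,'' though it would strengthen the write-up to make the count explicit ($l-1$ factors from the $x_j$-sums plus one from the free-variable repackaging gives $\Cf_k^l$). Your treatment of \eqref{f:conv_C} is the thinnest part: it mostly restates the claim, and it glosses over the fact that $\circ$ is not associative, so the meaning of the iterated $\circ$ on both sides needs to be fixed (say, left-associated) before the reindexing can be carried out; the same Fubini–reindex technique does go through once that convention is pinned down.
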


The next lemma shows that "higher sum sets"\, can be expressed in terms of ordinary sums, see e.g. \cite{SS1}.
%Proposition 10.

\begin{lemma}
    Let $A\subseteq \Gr$ be a set.
    Then
\begin{equation}\label{f:D(A)}
    |A^2 \pm \D(A)| = \sum_{x\in A-A} |A \pm A_x| \,.
\end{equation}
\label{l:D(A)}
\end{lemma}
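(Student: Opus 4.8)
The plan is to decompose the set $A^2 \pm \D(A)$ according to the ``diagonal coordinate'' of the $\Delta(A)$-term and count how many distinct values appear in each slice. Recall that $A^2 \pm \D(A) = \{(a_1, a_2) \pm (a, a) : a_1, a_2 \in A,\ a \in A\}$, a subset of $\Gr^2$. First I would write a generic element of this set as $(a_1 \pm a,\, a_2 \pm a)$. The key observation is that the \emph{difference} of the two coordinates, $(a_1 \pm a) - (a_2 \pm a) = a_1 - a_2$, does not involve $a$ at all; it ranges over $A - A$. So I would partition $A^2 \pm \D(A)$ into fibres indexed by $x \in A-A$, where the fibre over $x$ consists of all elements $(u, v)$ in the set with $u - v = x$.

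The main step is to identify the fibre over a fixed $x \in A-A$. An element $(a_1 \pm a, a_2 \pm a)$ lies in this fibre iff $a_1 - a_2 = x$, i.e. iff $a_2 = a_1 - x$ with both $a_1, a_1 - x \in A$; equivalently $a_1 \in A \cap (A + x)$. Hmm — I need to match this with $A_x = A \cap (A-x)$ from the excerpt. Reindexing: writing $a_1 = a' $ and requiring $a' \in A$, $a' - x \in A$ gives exactly $a' \in A \cap (A+x)$; to land on $A_x = A\cap(A-x)$ I would instead parametrize the pair as $(a_2 + x, a_2)$ with $a_2, a_2 + x \in A$, but it is cleaner to just note $A \cap (A+x)$ and $A\cap(A-x)$ have the same cardinality and absorb the sign of $x$ into the union $A-A = -(A-A)$. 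Then, with the first coordinate written as $a' \pm a$ where $a'$ ranges over a translate of $A_x$ and $a$ over $A$, the set of first coordinates is precisely $\pm a + (\text{translate of } A_x)$ summed over $a \in A$ — that is, a translate of $A \pm A_x$. Since the second coordinate is determined by the first (it equals first coordinate minus $x$), the fibre has cardinality exactly $|A \pm A_x|$.

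Summing over the fibres gives $|A^2 \pm \D(A)| = \sum_{x \in A-A} |A \pm A_x|$, as claimed. The one point requiring care — and the only real obstacle — is bookkeeping the signs and translates so that the fibre over $x$ comes out to exactly $|A\pm A_x|$ rather than $|A \pm A_{-x}|$ or a shifted variant; this is handled by the symmetry $A - A = -(A-A)$ and the translation-invariance of cardinality. Everything else is a routine double-counting argument, and no earlier lemma is actually needed.
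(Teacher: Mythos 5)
Your proof is correct, and the fibre decomposition along the coordinate difference $u-v$ is precisely the standard argument (the paper itself states this lemma without proof, referring to \cite{SS1}). The only place where your write‑up wavers is the sign bookkeeping around $A_x$; this is cleanest if one notes the exact identity $A\cap(A+x) = A_x + x$ (indeed, $a'\mapsto a'+x$ is a bijection from $A_x=A\cap(A-x)$ onto $A\cap(A+x)$), so that the set of admissible first coordinates in the fibre over $x$ is $(A_x+x)\pm A$, a translate of $A+A_x$ in the $+$ case and of $A_x-A=-(A-A_x)$ in the $-$ case, hence of cardinality exactly $|A\pm A_x|$ — no reindexing of the sum over $A-A$ is needed. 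With that observation, your double‑counting goes through verbatim and the argument is complete.
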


The main objects of the paper are two quantities $D^{+} (A)$, $D^\times (A)$.
%To formulate it we need in
Let us recall the
definitions.

\begin{definition}
A finite set $A \subset \R$ is said to be of {\it additive Szemer\'{e}di--Trotter type}
%(abbreviated as {\it SzT--type})
with a parameter $D^{+} (A) >0$ if the inequality
\begin{equation}\label{f:SzT-type}
\bigl|\bigl\{ s\in A-B ~\mid~ |A\cap (B+s)| \ge \tau \big\}\big|
    \le
\frac{D^{+} (A) |A| |B|^2}{\tau^{3}}\,,
\end{equation}
holds
 for every finite set $B\subset \R$ and every real number $\tau \ge 1$.
\label{def:SzT-type}
\end{definition}

 The quantity $D^{+} (A)$ can be considered as the infimum of numbers $D$ such that (\ref{f:SzT-type})
 %holds
 takes place
 for any $B$ and $\tau \ge 1$
 but, of course, the definition is applicable just for sets $A$ with  small quantity $D^+ (A)$.
 %It is known, see e.g. \cite{}
 It is easy to see
 that $D^{+} (A) \le |A|$, so
 %the quantity
 $|A|$ can be considered as a trivial upper bound for the quantity.
 Note also that $D^{+} (A) \ge 1$ (just take $B$ equals any one--element set and substitute $\tau =1$ into formula (\ref{f:SzT-type})).

Any SzT--type set has small number of solutions of a wide class of linear equations, see e.g. Corollary 8 from \cite{KS1} (where nevertheless another quantity $D^+ (A)$ was used)
%or
and
Lemmas  7, 8 from \cite{s_sumsets}, say.

\begin{lemma}
    Let  $A_1,A_2 \subset \R$ be any finite sets.
    Then
$$
    \E^{+} (A_1,A_2) \ll (D^+ (A_1))^{1/2} |A_1| |A_2|^{3/2} \,,
$$
    and
$$
    \E^{+}_3 (A_1,A_2) \ll D^+ (A_1) |A_1| |A_2|^{2} \cdot \log (\min\{ |A_1|, |A_2|\}) \,.
$$
\label{l:gen_sigma}
\end{lemma}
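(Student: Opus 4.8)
The plan is to bound both energies through a single dyadic decomposition of the level sets of the counting function
$$
    r(s) \;:=\; |A_1 \cap (A_2 + s)| \,,
$$
for which $\sum_s r(s) = |A_1||A_2|$, $r(s) \le m := \min\{|A_1|,|A_2|\}$, and, by the definitions of higher energies, $\E^{+} (A_1,A_2) = \sum_s r(s)^2$ and $\E^{+}_3 (A_1,A_2) = \sum_s r(s)^3$. So everything reduces to estimating $|\{ s ~:~ r(s) \ge \tau \}|$, and here I have two bounds. The first is the trivial $L^1$ (Markov) bound $|\{ s ~:~ r(s) \ge \tau \}| \le |A_1||A_2|/\tau$, valid for all $\tau > 0$. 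The second: since $r(s) \ge 1$ forces $s \in A_1 - A_2$, Definition \ref{def:SzT-type} applied with $B = A_2$ gives $|\{ s ~:~ r(s) \ge \tau \}| \le D^{+}(A_1)\,|A_1|\,|A_2|^2 / \tau^{3}$ for every $\tau \ge 1$.

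Next I would split the dyadic sum. Writing $P_j := \{ s ~:~ 2^j \le r(s) < 2^{j+1} \}$ (which is empty once $2^j > m$), one has $\sum_s r(s)^k \le \sum_{j \ge 0} |P_j|\, 2^{(j+1)k}$ for $k = 2, 3$. Let $\tau_0 := (D^{+}(A_1)\,|A_2|)^{1/2}$; note $\tau_0 \ge 1$ because $D^{+}(A_1) \ge 1$. This is exactly the scale at which the two level-set bounds above coincide, so the natural choice is to use the $L^1$ bound for $|P_j|$ when $2^j \le \tau_0$ and the Szemer\'edi--Trotter-type bound when $2^j > \tau_0$.

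For $k = 2$ both resulting pieces are geometric series dominated by their terms at the scale $\tau_0$: the low part is $\ll |A_1||A_2|\,\tau_0$ and the high part is $\ll D^{+}(A_1)|A_1||A_2|^2/\tau_0$, and with the chosen $\tau_0$ both of these equal $(D^{+}(A_1))^{1/2} |A_1| |A_2|^{3/2}$, which is the first inequality. For $k = 3$ the low part is now increasing in $j$, hence controlled by its largest term, and comes out as $\ll |A_1||A_2|\,\tau_0^2 = D^{+}(A_1)|A_1||A_2|^2$; in the high range the summand is of order $D^{+}(A_1)|A_1||A_2|^2$ \emph{for every} scale $\tau_0 < 2^j \le m$, so summing over the $O(\log m)$ relevant values of $j$ gives $\ll D^{+}(A_1)|A_1||A_2|^2 \log m$ with $m = \min\{|A_1|,|A_2|\}$. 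Adding the two ranges yields the second inequality.

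I do not expect a genuine obstacle here: once one thinks of playing off the $L^1$ bound against the Szemer\'edi--Trotter-type bound, the computation is routine; the only care needed is in fixing the balancing point $\tau_0$ and checking the harmless edge cases (that $\tau_0 \ge 1$, and that if $\tau_0 > m$ only the low range survives and the estimates are immediate). The real content is that the $D^{+}$-bound contributes equally at all dyadic scales above $\tau_0$, which is precisely the source of the extra logarithm in the $\E^{+}_3$ estimate and of the exponent $3/2$ rather than $2$ in the $\E^{+}$ estimate.
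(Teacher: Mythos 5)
Your argument is correct, and it is the standard one. Note that the paper does not actually prove Lemma~\ref{l:gen_sigma} --- it only cites Corollary~8 of \cite{KS1} and Lemmas~7,~8 of \cite{s_sumsets} --- so there is no in-text proof to compare against; your dyadic level-set argument (Markov for small multiplicities, the Definition~\ref{def:SzT-type} bound with $B=A_2$ for large multiplicities, balanced at $\tau_0=(D^+(A_1)|A_2|)^{1/2}$) is precisely the computation those references carry out, and you correctly identify the exponent-$3$ scale-invariance of the Szemer\'edi--Trotter-type bound as the source of the $\log(\min\{|A_1|,|A_2|\})$ factor. The only cosmetic caveat, which the paper also sweeps under the rug via its standing assumption $|A|\ge 2$ and the $\ll$ convention, is that the second inequality is vacuous when $\min\{|A_1|,|A_2|\}=1$; otherwise the edge cases you flag ($\tau_0\ge 1$, $\tau_0>m$) are handled exactly as you say.
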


Similarly, consider a dual characteristic of a set of real numbers.

\begin{definition}
A finite set $A \subset \R$ is said to be of {\it multiplicative Szemer\'{e}di--Trotter type}
%(abbreviated as {\it SzT--type})
with a parameter $D^{\times} (A) >0$ if the inequality
\begin{equation}\label{f:SzT-type_m}
\bigl|\bigl\{ s\in A/B ~\mid~ |A\cap sB| \ge \tau \big\}\big|
    \le
\frac{D^{\times} (A) |A| |B|^2}{\tau^{3}}\,,
\end{equation}
holds
 for every finite set $B\subset \R$ and every real number $\tau \ge 1$.
\label{def:SzT-type_m}
\end{definition}

Of course a multiplicative  analog of Lemma \ref{l:gen_sigma} takes place.

\begin{lemma}
    Let  $A_1,A_2 \subset \R$ be any finite sets.
    Then
$$
    \E^{\times} (A_1,A_2) \ll (D^\times (A_1))^{1/2} |A_1| |A_2|^{3/2} \,,
$$
    and
$$
    \E^{\times}_3 (A_1,A_2) \ll D^\times (A_1) |A_1| |A_2|^{2} \cdot \log (\min\{ |A_1|, |A_2|\}) \,.
$$
\label{l:gen_sigma_m}
\end{lemma}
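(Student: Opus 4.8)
The plan is to obtain both inequalities from one dyadic decomposition over the level sets of the multiplicative representation function $s\mapsto |A_1\cap sA_2|$, estimating those level sets by Definition~\ref{def:SzT-type_m} with $B=A_2$. This is nothing but the multiplicative transcription of the argument behind Lemma~\ref{l:gen_sigma}, so one could instead transfer that lemma through the group isomorphism $\log\colon\R_{>0}\to(\R,+)$, after first splitting each $A_i$ into its positive and negative parts; the direct route below spares us that sign bookkeeping.

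We assume $0\notin A_1\cup A_2$ (otherwise discard $0$; this is the convention under which these energies and the quantity $D^{\times}$ are considered). Then $\E^{\times}(A_1,A_2)=\sum_{s}|A_1\cap sA_2|^2$ and $\E^{\times}_3(A_1,A_2)=\sum_{s}|A_1\cap sA_2|^3$, with $s$ running over $A_1/A_2$; moreover $\sum_s |A_1\cap sA_2|=|A_1||A_2|$ and $|A_1\cap sA_2|\le \min\{|A_1|,|A_2|\}$ for every $s$. For a dyadic parameter $\tau=2^{j}\ge 1$ put $P_\tau=\{s\in A_1/A_2\colon |A_1\cap sA_2|\ge \tau\}$; then Definition~\ref{def:SzT-type_m} with $B=A_2$ gives $|P_\tau|\ll D^{\times}(A_1)\,|A_1|\,|A_2|^2\,\tau^{-3}$.

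For the first inequality, split $\sum_s |A_1\cap sA_2|^2$ into dyadic shells at a threshold $\tau_0$: the levels below $\tau_0$ contribute at most $\tau_0\sum_s |A_1\cap sA_2|=\tau_0|A_1||A_2|$, while the levels above $\tau_0$ contribute $\ll\sum_{\tau>\tau_0}\tau^2|P_\tau|\ll D^{\times}(A_1)|A_1||A_2|^2\tau_0^{-1}$. Taking $\tau_0\sim (D^{\times}(A_1)|A_2|)^{1/2}$ (which is $\ge 1$ since $D^{\times}(A_1)\ge 1$) balances the two, so $\E^{\times}(A_1,A_2)\ll (D^{\times}(A_1))^{1/2}|A_1||A_2|^{3/2}$. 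For the second inequality one notes that only $O(\log\min\{|A_1|,|A_2|\})$ dyadic values of $\tau$ carry any mass, because $|A_1\cap sA_2|\le\min\{|A_1|,|A_2|\}$, and that $\tau^3|P_\tau|\ll D^{\times}(A_1)|A_1||A_2|^2$ for each of them; summing $\sum_s|A_1\cap sA_2|^3\ll\sum_\tau \tau^3|P_\tau|$ over these values yields $\E^{\times}_3(A_1,A_2)\ll D^{\times}(A_1)|A_1||A_2|^2\log(\min\{|A_1|,|A_2|\})$.

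The computation is routine. The only genuine choice --- and the \textbf{main obstacle}, in so far as there is one --- is picking the balancing threshold $\tau_0$ in the $\E^{\times}$-estimate; it is also worth noting that the logarithmic factor in the $\E^{\times}_3$-estimate is precisely the number of dyadic scales, hence intrinsic to this method rather than an artefact. Beyond this no extra ingredient is needed: Definition~\ref{def:SzT-type_m} is built exactly so that this dyadic summation closes.
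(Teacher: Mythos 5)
Your proof is correct, and it is the standard dyadic-shell argument that Definition~\ref{def:SzT-type_m} is designed to feed; the paper itself gives no proof of this lemma (it merely states it as the multiplicative transcription of Lemma~\ref{l:gen_sigma}, deferring to the cited sources \cite{KS1}, \cite{s_sumsets}), and the argument in those sources is exactly the level-set decomposition with a balanced threshold for $\E^{\times}$ and a count of $O(\log\min\{|A_1|,|A_2|\})$ dyadic scales for $\E^{\times}_3$ that you carry out. Your identification $\E^{\times}(A_1,A_2)=\sum_s|A_1\cap sA_2|^2$ (via $a_1b_1=a_2b_2\iff a_1/b_2=a_2/b_1$) and the observation that $D^{\times}(A_1)\ge1$ guarantees $\tau_0\ge1$ are both correct, so nothing is missing.
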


\section{Norms $\E_k$}
\label{sec:norms}

%\bigskip
%$\hfill\Box$

For any function $f : \Gr \to \C$ and an arbitrary integer $k\ge 1$ put
\begin{equation}\label{f:E_k_norm}
    \| f \|_{E_k} := (\E_k (f))^{1/2k} = (\sum_{x} (\ov{f}\c f) (x)^{k} )^{1/2k}
    \,.
    %= \| f\c f \|^2_k \,.
\end{equation}
By formula (\ref{f:F_svertka}), we get
\begin{equation}\label{f:E_k_norm'}
    \| f \|^{2k}_{E_k} = N^{-(k-1)} \sum_{x_1+\dots+x_k = 0} |\FF{f} (x_1)|^2 \dots |\FF{f} (x_k)|^2
\end{equation}
and hence the expression is nonnegative.
Another way is to think about $\| f \|_{E_k}$ is to note that by formula (\ref{f:scalar_C}) of Lemma \ref{l:commutative_C}, we have
\begin{equation}\label{f:E_k_norm''}
    \| f \|^{2k}_{E_k} = \sum_{x_1,\dots,x_{k-1}} |\Cf_{k} (f) (x_1,\dots,x_{k-1})|^2 \,.
\end{equation}
%Again
Note that there are nonzero functions $f$ with $\| f \|_{E_k} = 0$, e.g. $\Gr = \F_p$, $p$ is a prime number, $k< p$
and $f(x) = e^{2\pi ix/p}$.
If we restrict ourselves to consider just real functions then again it is possible to find  nonzero functions $f$ with $\| f \|_{E_k} = 0$.

\begin{example}
    Let $\Gr = \f_2^n$ and $f(x) = f(x_1,\dots,x_n) = (-1)^{x_1+\dots+x_n}$.
    Then $\FF{f} (r) = 0$ for any $r\neq (1,\dots,1)$.
    Thus by formula (\ref{f:E_k_norm'}), we have $\| f \|_{E_k} = 0$ for all odd $k$.
\end{example}

If $k \ge 2$ is even then the last situation is not possible.

\begin{lemma}
    Let $f : \Gr \to \R$ be a function and $k\ge 2$ be an even number.
    Then $\| f\|_{E_k} = 0$ iff $f\equiv 0$.
\label{l:zero_E_k}
\end{lemma}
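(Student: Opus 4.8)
The plan is to use the Fourier-analytic expression (\ref{f:E_k_norm'}) for $\|f\|^{2k}_{E_k}$. Since $k\ge 2$ is even, write $k = 2m$ with $m\ge 1$. The idea is that the multiset of frequencies $\{x_1,\dots,x_k\}$ summing to zero can be paired up, and in particular we may take $x_1=\xi$, $x_2=-\xi$, $x_3=\eta$, $x_4=-\eta$, and so on, choosing the remaining pairs freely. First I would observe that $|\FF{f}(\xi)|^2 |\FF{f}(-\xi)|^2 = |\FF{f}(\xi)|^2 \overline{|\FF{f}(\xi)|^2}$-type positivity is not quite what is needed; rather, the cleanest route is to restrict the sum in (\ref{f:E_k_norm'}) to this sub-family of tuples. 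Because every term $|\FF{f}(x_1)|^2 \cdots |\FF{f}(x_k)|^2$ is nonnegative, restricting to a subset of the index set only decreases (or keeps) the value; hence if $\|f\|_{E_k}=0$ then in particular this restricted sum vanishes.

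Next I would make the restricted sum explicit. Choosing $x_{2j-1} = \xi_j$ and $x_{2j} = -\xi_j$ for $j=1,\dots,m$, the constraint $x_1+\dots+x_k=0$ is automatically satisfied, and the corresponding partial sum equals
\begin{equation*}
    N^{-(k-1)} \sum_{\xi_1,\dots,\xi_m} |\FF{f}(\xi_1)|^4 \cdots |\FF{f}(\xi_m)|^4
    = N^{-(k-1)} \Big( \sum_{\xi} |\FF{f}(\xi)|^4 \Big)^{m} \,,
\end{equation*}
using that $|\FF{f}(-\xi)| = |\FF{f}(\xi)|$ for real $f$ (since $\FF{f}(-\xi) = \overline{\FF{f}(\xi)}$ when $f$ is real-valued). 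Therefore $\|f\|_{E_k}=0$ forces $\sum_\xi |\FF{f}(\xi)|^4 = 0$, hence $\FF{f}(\xi)=0$ for every $\xi$, and so $f\equiv 0$ by the inversion formula (\ref{f:inverse}). The converse direction, $f\equiv 0 \implies \|f\|_{E_k}=0$, is immediate from the definition. (Alternatively, one can run the same argument directly with the convolution form: $\|f\|^{2k}_{E_k} \ge \sum_x (\ov f\c f)(x)^{k}$ restricted to... but the Fourier route is cleanest since it manifestly exhibits the needed nonnegativity.)

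I would also remark on why $N$ being finite is used: the identity (\ref{f:E_k_norm'}) and the pairing trick are stated for a finite abelian group $\Gr$; for the application to subsets of $\R$ one works in a suitable finite cyclic group or simply notes that a real function on $\R$ with finite support can be transferred to $\z_N$ for large $N$ without affecting the relevant energies, which is the standard device used throughout the paper.

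The main obstacle is essentially bookkeeping rather than a genuine difficulty: one must be careful that the restricted family of tuples $(x_1,\dots,x_k)$ — those of the form $(\xi_1,-\xi_1,\dots,\xi_m,-\xi_m)$ — is indeed a subset of the full index set $\{(x_1,\dots,x_k) : x_1+\dots+x_k=0\}$, and that overlaps between tuples (when some $\xi_i$ coincide, or $\xi_i = -\xi_j$) do not cause double-counting in a way that breaks the inequality. Since we are only claiming a \emph{lower} bound on the nonnegative full sum by a nonnegative partial sum, overlaps are harmless: one simply picks, for each choice of $(\xi_1,\dots,\xi_m)$, one representative tuple and sums the corresponding (nonnegative) terms, which is at most the full sum. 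The evenness of $k$ is exactly what makes this pairing possible; the $\f_2^n$ example shows it genuinely fails for odd $k$.
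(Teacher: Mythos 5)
Your argument is correct and is essentially a small variant of the paper's first (Fourier) proof. Where you sum the nonnegative terms of (\ref{f:E_k_norm'}) over the sub-family of tuples $(\xi_1,-\xi_1,\dots,\xi_m,-\xi_m)$ and factorize to get $N^{-(k-1)}\bigl(\sum_\xi|\FF{f}(\xi)|^4\bigr)^m$, the paper specializes to the single alternating tuple $x_1=x,\,x_2=-x,\dots,x_{k-1}=x,\,x_k=-x$ to read off $|\FF{f}(x)|^{2k}=0$ pointwise; it is the same pairing trick, just applied pointwise rather than after summation. Worth noting, though: the paper gives two further, Fourier-free proofs, and the shortest of them is precisely the ``convolution form'' that you brush aside in your parenthetical. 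For real $f$ the quantity $(\ov f\c f)(x)$ is real for every $x$, and since $k$ is even, each term $(\ov f\c f)(x)^k$ in $\|f\|^{2k}_{E_k}=\sum_x(\ov f\c f)(x)^k$ is already manifestly nonnegative; dropping all but the $x=0$ term gives $\|f\|_{E_k}^{2k}\ge(\ov f\c f)(0)^k=\|f\|^{2k}_2$, hence $\|f\|_{E_k}\ge\|f\|_2$, which finishes the proof with no Fourier identity, no injectivity bookkeeping, and no restriction to finite $\Gr$. So your instinct that the Fourier route ``manifestly exhibits the needed nonnegativity'' has it backwards: evenness of $k$ makes the nonnegativity manifest already in the convolution form, and that observation sidesteps the caveats occupying your last two paragraphs entirely. (The paper's second proof is a halfway house: it uses (\ref{f:E_k_norm''}) and evaluates $\Cf_k(f)$ at $(0,\dots,0)$ to get $\sum_z|f(z)|^k=0$.)
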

\begin{proof}
    We give even three proofs.
    The first one uses Fourier analysis and another two do not.
    %and without it.
    Applying  formula (\ref{f:E_k_norm'}) we see that
\begin{equation}\label{tmp:20.04.2016_3}
    |\FF{f} (x_1)|^2 \dots |\FF{f} (x_k)|^2 = 0
\end{equation}
    for all $x_1,\dots, x_k \in \Gr$ such that $x_1 + \dots +x_k =0$.
    By the assumption $f$ is a real function, thus $\FF{f} (-x) = \ov{\FF{f} (x)}$.
    %Applying
    Using
    the fact and substitute  variables
    $x_1 =x$, $x_2=-x$, $x_3=x$, $x_4 = -x, \dots$, $x_{k-1}=x$, $x_k=x$ into formula (\ref{tmp:20.04.2016_3}), we obtain
    $
        |\FF{f} (x)|^{2k} = 0
    $
    for every $x$ and hence $f\equiv 0$.

    In our second proof, we use
    %formula
     identity
     (\ref{f:E_k_norm''})
    %it follows that
    and see that
$$
    \Cf_{k} (f) (x_1,\dots,x_{k-1}) = \sum_{z} f(z) f(z+x_1) \dots f(z+x_{k-1}) = 0
$$
    for any $x_1,\dots,x_{k-1} \in \Gr$.
    Put $x_1 = \dots = x_{k-1} =0$ and using the assumption that $k$ is even as well as $f$ is a real function, we obtain $\sum_z f^{k} (x) = \sum_z |f(z)|^k = 0$ which implies $f\equiv 0$.

    Finally, applying our conditions we see from definition (\ref{f:E_k_norm}) that $\| f\|_{E_k} \ge \| f \|_2$
    (just substitute $x=0$ in  (\ref{f:E_k_norm})).
This completes the proof.
%as required.
$\hfill\Box$
\end{proof}

\begin{lemma}
    For any $k\ge 1$ and arbitrary functions $f_1,f'_1, \dots,f_{k}, f'_k : \Gr \to \C$ the following holds
\begin{equation}\label{f:E_k_prod}
    \left| \sum_x (f_1 \c f'_1) (x) \dots (f_{k} \c f'_{k}) (x) \right|
        \le
            \prod_{j=1}^{k} \| |f_j| \|_{E_k}  \| |f'_j| \|_{E_k} \,.
\end{equation}
    If $k\ge 2$ is even and all functions are real then one can remove modulus from formula (\ref{f:E_k_prod}).
\label{l:E_k_prod}
\end{lemma}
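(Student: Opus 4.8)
The plan is to reduce the general multilinear estimate to the ``diagonal'' norm $\|\cdot\|_{E_k}$ by a repeated application of the Cauchy--Schwarz inequality (in its ``tensor'' form for generalized convolutions), using the representation \eqref{f:E_k_norm''} of $\|f\|_{E_k}^{2k}$ as the $\ell^2$-norm squared of $\Cf_k(f)$. First I would pass to absolute values: since $|(f_j\c f'_j)(x)| = |\sum_y \ov{f_j(y)} f'_j(y+x)| \le (|f_j|\c|f'_j|)(x)$, it suffices to bound $\sum_x (|f_1|\c|f'_1|)(x)\cdots(|f_k|\c|f'_k|)(x)$ from above, which is exactly the right-hand quantity with all functions replaced by their moduli; so WLOG all $f_j,f'_j\ge 0$ and we drop the outer modulus.

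Next I would invoke the ``multi-scalar product'' identity \eqref{f:gen_C} of Lemma~\ref{l:commutative_C}, which rewrites the left side as $\sum_{y_1,\dots,y_{k-1}} \prod_{j=1}^{k}\Cf_k(f_j,f'_j,\dots)$ — more precisely, Lemma~\ref{l:commutative_C} lets me express $\sum_x\prod_j (f_j\c f'_j)(x)$ as $\sum_{x_1,\dots,x_{k-1}}\Cf_k(f_1)(x_1,\dots,x_{k-1})\Cf_k(f'_1)(\cdots)\cdots$ when the $f_j$ coincide, but here the $2k$ functions are distinct, so instead I would apply the scalar-product identity \eqref{f:scalar_C} iteratively. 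A clean route: write the sum over $x$ as an inner product of $\prod_{j\le k/2}(f_j\c f'_j)$ against $\prod_{j>k/2}(f_j\c f'_j)$ and apply Cauchy--Schwarz; each factor $\sum_x \big(\prod_j (f_j\c f'_j)(x)\big)^2$ is, by \eqref{f:scalar_C} read backwards, a multi-scalar product $\sum_{x_1,\dots} \prod \Cf_k(\cdot)$, and one continues splitting until each remaining term is of the pure form $\sum_{x_1,\dots,x_{k-1}}|\Cf_k(g)(x_1,\dots,x_{k-1})|^2 = \|g\|_{E_k}^{2k}$ for a single function $g$. Tracking the exponents through $\log_2 k$-many Cauchy--Schwarz steps (or, more robustly, a single application of the general Hölder inequality for the $k$-linear form, which is the conceptually cleanest version) yields exactly $\prod_{j=1}^k \|f_j\|_{E_k}\|f'_j\|_{E_k}$.

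For the last sentence, when $k$ is even and all functions are real, I would argue that the left-hand side is automatically nonnegative, so the modulus is vacuous: by \eqref{f:E_k_norm''} with a polarization/Cauchy--Schwarz step, $\sum_x\prod_{j=1}^k(f_j\c f'_j)(x)$ equals a sum of squares. Concretely, group the $k$ factors into $k/2$ pairs; using $\Cf_k$ and identity \eqref{f:scalar_C}, $\sum_x (f_1\c f'_1)(x)\cdots(f_k\c f'_k)(x)$ can be written as $\sum_{x_1,\dots,x_{k-1}} \Cf_k(h_1)(\vec x)\cdots$, and when $k$ is even each $\Cf_k$ of a real function is real, and a final regrouping expresses the whole quantity as $\sum_{\vec x}\big|\Cf_{k/2+1}(\dots)(\vec x)\big|^2 \ge 0$ — this is the same mechanism as in the second proof of Lemma~\ref{l:zero_E_k}.

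The main obstacle I anticipate is bookkeeping: making the iterated Cauchy--Schwarz splitting land exactly on the diagonal norms with the correct exponents $1/(2k)$ on each of the $2k$ functions, without losing constants. The cleanest way to avoid a messy induction is to recognize the $2k$-linear form $\sum_x\prod_j(f_j\c f'_j)(x)$ as the multi-scalar product \eqref{f:gen_C} and apply the $2k$-variable Hölder inequality directly, each factor then being $\big(\sum_{\vec x}|\Cf_k(f_j)|^2\big)^{1/(2k)}\cdot 2 = \|f_j\|_{E_k}$ — but I would double-check that the $\Cf_k$-representation genuinely accommodates $2k$ distinct functions, falling back on the pairwise Cauchy--Schwarz scheme if the index arithmetic in Lemma~\ref{l:commutative_C} does not cover that case verbatim.
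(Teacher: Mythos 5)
Your first paragraph and the ``clean route'' contain the right ingredients, but the proposal has two problems, one cosmetic and one genuine.

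\textbf{On the main inequality.} You worry that identity~(\ref{f:scalar_C}) may not ``genuinely accommodate $2k$ distinct functions,'' but it does: $f_0,\dots,f_{l-1}$ and $g_0,\dots,g_{l-1}$ there are arbitrary. The paper uses it directly: apply~(\ref{f:scalar_C}) with $l=k$ to write $\sigma=\sum_{\vec x}\Cf_k(f_1,\dots,f_k)(\vec x)\,\Cf_k(f'_1,\dots,f'_k)(\vec x)$, then one Cauchy--Schwarz to separate the $f_j$'s from the $f'_j$'s, then~(\ref{f:scalar_C}) again to go back to $\sum_x(\ov{f_1}\c f_1)(x)\cdots(\ov{f_k}\c f_k)(x)$, and finally H\"older plus $|(\ov f\c f)(x)|\le(|f|\c|f|)(x)$. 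Your alternative order --- generalized H\"older in $x$ first, giving $\prod_j\big(\sum_x|(f_j\c f'_j)(x)|^k\big)^{1/k}$, then a Cauchy--Schwarz on each factor via $\sum_x(f_j\c f'_j)^k(x)=\sum_{\vec x}\Cf_k(f_j)\Cf_k(f'_j)$ --- also works and handles non-powers-of-two $k$ with no bookkeeping, so it is a legitimate variant. The ``iterated halving'' version, by contrast, breaks for odd $k$ and is not needed.

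\textbf{On the even-$k$ real case: this part of your argument is wrong.} You claim the left-hand side is ``automatically nonnegative'' because it ``equals a sum of squares.'' It does not, and it can be negative. Take $\Gr=\Z/2\Z$, $k=2$, $f_1=f'_1=(1,0)$, $f_2=(1,-1)$, $f'_2=(0,1)$. Then $(f_1\c f'_1)=(1,0)$ and $(f_2\c f'_2)=(-1,1)$, so $\sum_x(f_1\c f'_1)(x)(f_2\c f'_2)(x)=-1<0$, while the right-hand side $\|f_1\|_{E_2}\|f'_1\|_{E_2}\|f_2\|_{E_2}\|f'_2\|_{E_2}=8^{1/4}>1$, so the inequality still holds but the sum-of-squares claim fails already at $k=2$. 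More importantly, the substantive content of ``one can remove modulus'' is on the \emph{right}: the bound improves from $\||f_j|\|_{E_k}$ to $\|f_j\|_{E_k}$, and these differ in general since $\|f\|_{E_k}^{2k}=\sum_x(f\c f)^k(x)\le\sum_x(|f|\c|f|)^k(x)=\||f|\|_{E_k}^{2k}$. Your opening move --- replacing all $f_j$ by $|f_j|$ at the outset --- throws this refinement away. The correct reason is local and simple: when $k$ is even and $f_j$ is real, $|(\ov{f_j}\c f_j)(x)|^k=(f_j\c f_j)(x)^k$ exactly, so the moduli-inserting step in the general proof becomes an equality; every other step (Cauchy--Schwarz, H\"older) is already modulus-free. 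That is what the paper's last two lines record.

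So: the first part of your plan is salvageable with the Hölder-then-Cauchy--Schwarz reorganization; the second part needs to be replaced as above.
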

\begin{proof}
By formula (\ref{f:scalar_C}) of Lemma \ref{l:commutative_C}, we have
$$
    \sigma:= \sum_x (f_1 \c f'_1) (x) \dots (f_{k} \c f'_{k}) (x)
        =
            \sum_{x_1,\dots,x_k}
                \Cf_{k+1} (f_1,\dots,f_{k}) (x_1,\dots,x_k) \Cf_{k+1} (f'_1,\dots,f'_{k}) (x_1,\dots,x_k) \,.
$$
Using the Cauchy--Schwarz inequality and formula (\ref{f:scalar_C}) of Lemma \ref{l:commutative_C} again, we obtain
$$
    |\sigma|^2
        \le
            \sum_{x_1,\dots,x_k} |\Cf_{k+1} (f_1,\dots,f_{k}) (x_1,\dots,x_k)|^2
                \cdot
                    \sum_{x_1,\dots,x_k} |\Cf_{k+1} (f'_1,\dots,f'_{k}) (x_1,\dots,x_k)|^2
                        =
$$
$$
    = \sum_x (\ov{f_1} \c f_1) (x) \dots (\ov{f_{k}} \c f_{k}) (x)
        \cdot
        \sum_x (\ov{f'_1} \c f'_1) (x) \dots (\ov{f'_{k}} \c f'_{k}) (x) \,.
$$
By the H\"{o}lder inequality it is sufficient to prove that
$$
    |\sigma| = \sum_x |(\ov{f} \c f) (x)|^k
        \le
            \sum_x (|f| \c |f|) (x)^k
        =
             \| |f| \|_{E_k}^{2k}
$$
for any function $f : \Gr \to \C$.
If $k$ is even and $f$ is a real function then we need to check that
$$
    \sigma = \sum_x (f \c f) (x)^k = \| f \|^{2k}_{E_k} \,.
$$
The last two formulas
%just
coincide with
the definition of the norm $E_k$.
This completes the proof.
$\hfill\Box$
\end{proof}

\bigskip

\begin{example}
    Let $\Gr = \f_2^n$, $f_1 (x) = f_{\v{\la}'} (x) = f_1 (x_1,\dots,x_n) = (-1)^{\la_1 x_1 + \dots + \la_n x_n}$,
    $\v{\la}' = (\la_1,\dots,\la_n)$ and similar $f_2 (x) = f_{\v{\la}''} (x)$, $f_3 (x) = f_{\v{\la}'''} (x)$.
    Take $\v{\la}'$, $\v{\la}''$, $\v{\la}'''$ be three nonzero vectors such that
    $\v{\la}' + \v{\la}'' + \v{\la}''' = 0$.
    Then by simple calculations, we get
$$
    \sum_{x} (f_1 \c f_1) (x) (f_2 \c f_2) (x) (f_3 \c f_3) (x) = 2^{4n}
$$
    but for any $j=1,2,3$ one has $\sum_{x} (f_j \c f_j)^3 (x) = 0$.
    Thus (\ref{f:E_k_prod}) does not hold without modula in the case of odd $k$ and $f_j$ are real functions.
\end{example}

\bigskip

We need a
%simple
combinatorial lemma.
Let $l$ be a positive integer and let $\Omega_l = \{ 0,1 \}^l$.
For any $\eps \in \Omega_l$ put $\wt(\eps)$ equals the number of ones in $\eps$.
Finally, given  numbers $k_1,\dots, k_s$ such that $k_1+\dots+k_s = k$ write $\binom{k}{k_1, \dots, k_s}$ for
$\frac{k!}{k_1! \dots k_s!}$.

\begin{lemma}
    Let $n,k,l$ be positive integers, $n\le lk$.
    Then
\begin{equation}\label{f:comb1}
    \sum_{\sum_{\eps \in \Omega_l} n_\eps =k,\,\, \sum_{\eps \in \Omega_l} \wt(\eps) n_\eps = n}\,
        \frac{k!}{\prod_{\eps \in \Omega_l} n_\eps!} = \frac{(lk)!}{n!(lk-n)!} \,.
\end{equation}
    In particular,
\begin{equation}\label{f:comb2}
    \sum_{n_1+n_2+n_3+n_4=k,\, 2n_1 + n_2 + n_3 = n}\, \frac{k!}{n_1! n_2! n_3! n_4!} = \frac{(2k)!}{n!(2k-n)!} \,.
\end{equation}
\label{l:comb}
\end{lemma}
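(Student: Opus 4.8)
The identity (\ref{f:comb1}) is a purely combinatorial statement, and the natural route is to interpret both sides as counting the same family of objects (or, equivalently, to compare coefficients in a generating function). The right-hand side $\binom{lk}{n}$ counts the ways to choose an $n$-element subset $S$ of the index set $[l]\times[k]$, which we picture as an $l\times k$ grid of cells arranged in $k$ columns of height $l$. First I would record the bookkeeping: each column $j\in[k]$ gets a ``pattern'' $\eps^{(j)}\in\Omega_l=\{0,1\}^l$, namely the indicator of which of its $l$ cells lie in $S$; then $\wt(\eps^{(j)})$ is the number of chosen cells in column $j$, and the total $\sum_j \wt(\eps^{(j)})$ equals $|S|=n$. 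Grouping the columns by which pattern they carry, and letting $n_\eps$ be the number of columns with pattern $\eps$, we get $\sum_{\eps\in\Omega_l} n_\eps = k$ (there are $k$ columns) and $\sum_{\eps\in\Omega_l}\wt(\eps)\,n_\eps = n$ (the two constraints in the sum on the left).

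The second step is to count, for a fixed admissible vector $(n_\eps)_{\eps\in\Omega_l}$, how many subsets $S$ realize it. Choosing $S$ is the same as choosing an ordered assignment of a pattern to each of the $k$ columns so that pattern $\eps$ is used exactly $n_\eps$ times; the number of such assignments is the multinomial coefficient $\binom{k}{(n_\eps)_{\eps\in\Omega_l}} = \dfrac{k!}{\prod_{\eps\in\Omega_l} n_\eps!}$. Summing over all admissible $(n_\eps)$ partitions the collection of all $n$-subsets $S\subseteq[l]\times[k]$ according to their pattern-multiplicity vector, so the left-hand side of (\ref{f:comb1}) equals the total number of $n$-subsets of an $lk$-element set, which is $\binom{lk}{n} = \dfrac{(lk)!}{n!(lk-n)!}$. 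This proves (\ref{f:comb1}). Formula (\ref{f:comb2}) is then merely the special case $l=2$: here $\Omega_2 = \{00,01,10,11\}$ with weights $0,1,1,2$ respectively, so writing $n_1$ for $n_{11}$, $n_2$ for $n_{10}$, $n_3$ for $n_{01}$, $n_4$ for $n_{00}$, the constraints become $n_1+n_2+n_3+n_4=k$ and $2n_1+n_2+n_3=n$, and $\frac{k!}{\prod n_\eps!} = \frac{k!}{n_1!n_2!n_3!n_4!}$, giving exactly (\ref{f:comb2}) with right-hand side $\binom{2k}{n}$.

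An equivalent and even shorter packaging, which I would at least mention, is the generating-function identity: expand
$$
(1+x)^{lk} = \Bigl(\prod_{i=1}^{l}(1+x)\Bigr)^{k} = \Bigl(\sum_{\eps\in\Omega_l} x^{\wt(\eps)}\Bigr)^{k},
$$
using $\prod_{i=1}^l (1+x) = \sum_{\eps\in\Omega_l} x^{\wt(\eps)}$, and then apply the multinomial theorem to the last power; the coefficient of $x^n$ on the left is $\binom{lk}{n}$, and on the right it is precisely the sum in (\ref{f:comb1}). There is essentially no obstacle here: the only thing to be careful about is the translation of the two linear constraints on $(n_\eps)$ into the combinatorial model (or into the exponent $n$ in the generating function), and the harmless convention $0!=1$ so that patterns not used at all contribute trivially. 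I would present the bijective argument as the main proof and note the generating-function version as an alternative.
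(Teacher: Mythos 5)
Your proof is correct, and it takes a slightly different (and arguably cleaner) route than the paper. The paper's proof is purely generating-functional but in a two-stage way: it first writes $(1+x)^{lk}=\bigl(\sum_{i=0}^l\binom{l}{i}x^i\bigr)^k$, extracts the coefficient of $x^n$ to get a sum over weight-class multiplicities $(m_0,\dots,m_l)$ involving the factors $\binom{l}{r}^{m_r}$, and then must expand each $\binom{l}{r}^{m_r}=\sum_{s_1+\cdots+s_{\binom{l}{r}}=m_r}\binom{m_r}{s_1,\dots,s_{\binom{l}{r}}}$ and carefully relabel the resulting variables as $n_\eps$ indexed by $\eps\in\Omega_l$ with $\wt(\eps)=r$. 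Your main argument replaces this entirely by a direct double count: partitioning the $n$-subsets of an $l\times k$ grid according to the multiset of column patterns realizes both sides of (\ref{f:comb1}) at once, with no relabeling step. Your generating-function aside also improves on the paper's version by using the factorization $(1+x)^l=\sum_{\eps\in\Omega_l}x^{\wt(\eps)}$ directly, so that a single application of the multinomial theorem immediately produces the left-hand side of (\ref{f:comb1}) as the coefficient of $x^n$, again avoiding the paper's intermediate collection by weight and the subsequent re-expansion. Both approaches are valid; yours is shorter and makes the combinatorial content transparent, while the paper's is mechanical algebraic coefficient extraction. The specialization $l=2$ giving (\ref{f:comb2}) is handled correctly in your write-up.
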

\begin{proof}
    One has
$$
    \sum_{n} \binom{lk}{n} x^n = (1+x)^{lk} = \left( \sum_{i=0}^l \binom{l}{i} x^i \right)^k
        = \sum_{m_0+m_1+\dots+m_l = k}\, x^{\sum_{j=1}^l j m_j} \binom{k}{m_0, m_1, \dots, m_l}
            \prod_{r=0}^l \binom{l}{r}^{m_r} \,.
$$
It follows that
$$
    \binom{lk}{n} =  \sum_{m_0+m_1+\dots+m_l = k,\,\, \sum_{j=1}^l j m_j = n}\, \binom{k}{m_0, m_1, \dots, m_l}
            \prod_{r=0}^l \binom{l}{r}^{m_r}
            =
$$
$$
    =
        \sum_{m_0+m_1+\dots+m_l = k,\,\, \sum_{j=1}^l j m_j = n}\, \binom{k}{m_0, m_1, \dots, m_l}
            \prod_{r=0}^l\, \sum_{s_1+\dots + s_{\binom{l}{r}} = m_r} \binom{m_r}{s_1, \dots, s_{\binom{l}{r}}} \,.
$$
    Fix $r \in 0,\dots,l$ and redenote $\binom{l}{r}$ variables $s_{\binom{l}{r}}$ by $n_\eps$, $\eps \in \Omega_l$ such that $\wt (\eps) = r$.
    Hence we have redenoted all $2^l$ variables $s_{\binom{l}{r}}$, $r=0,\dots,l$ as $n_\eps$, $\eps \in \Omega_l$.
    Note that $\sum_{\eps \in \Omega_l} n_\eps = m_0+m_1+\dots+m_l = k$.
    Further, by
    %our construction
    our choice of enumeration of $n_\eps$, we get
    $\sum_{\eps \in \Omega_l} \wt(\eps) n_\eps = \sum_{j=1}^l j m_j = n$.
    Thus, we obtain
$$
    \binom{lk}{n}
        =
            \sum_{\sum_{\eps \in \Omega_l} n_\eps =k,\,\, \sum_{\eps \in \Omega_l} \wt(\eps) n_\eps = n}\,
            \frac{k!}{\prod_{\eps \in \Omega_l} n_\eps!}
$$
%This completes the proof.
as required.
$\hfill\Box$
\end{proof}

\bigskip

Using the lemmas above we are ready to prove the main result of the section.

\begin{proposition}
    Let $k\ge 2$ be an integer.
    Then for any pair of functions $f,g : \Gr \to \C$ the following holds
    $$\| f + g\|_{E_k} \le \| |f| \|_{E_k} + \| |g| \|_{E_k} \,.$$
    If
    %$f,g$ are
    we consider just real functions
    and $k$ is even then
    %In other words,
    $\| \cdot \|_{E_k}$ is a norm.
\label{p:E_k-norm}
\end{proposition}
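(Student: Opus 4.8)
\proof\ (plan).
The plan is to expand $\E_k (f+g)$ by the multinomial theorem, bound each resulting term by a product of $E_k$--norms, and collect the multinomial coefficients via identity (\ref{f:comb2}) of Lemma~\ref{l:comb}. Using bilinearity of $\circ$ write
$$
    (\ov{f+g}\c(f+g))(x) = (\ov{f}\c f)(x) + (\ov{f}\c g)(x) + (\ov{g}\c f)(x) + (\ov{g}\c g)(x) \,,
$$
raise to the $k$--th power, expand, and sum in $x$:
$$
    \| f+g\|^{2k}_{E_k} = \E_k (f+g) = \sum_{n_1+n_2+n_3+n_4=k} \binom{k}{n_1,n_2,n_3,n_4}
        \sum_x (\ov{f}\c f)(x)^{n_1} (\ov{f}\c g)(x)^{n_2} (\ov{g}\c f)(x)^{n_3} (\ov{g}\c g)(x)^{n_4} \,.
$$
Each inner sum is a scalar product of $k$ functions of the form $(h\c h')$ with $h\in\{\ov{f},\ov{g}\}$, $h'\in\{f,g\}$, so Lemma~\ref{l:E_k_prod} bounds its absolute value by $\| |f| \|_{E_k}^{2n_1+n_2+n_3}\| |g| \|_{E_k}^{2n_4+n_2+n_3}$ (a short count of the $|f|$'s and $|g|$'s). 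Since $\E_k (f+g)\ge 0$ by (\ref{f:E_k_norm'}), we may replace the inner sums by these bounds; writing $n = 2n_1+n_2+n_3$ and grouping the terms by the value of $n$, identity (\ref{f:comb2}) of Lemma~\ref{l:comb} gives $\E_k (f+g)\le \sum_{n=0}^{2k}\binom{2k}{n}\| |f| \|_{E_k}^{n}\| |g| \|_{E_k}^{2k-n} = \big(\| |f| \|_{E_k}+\| |g| \|_{E_k}\big)^{2k}$, and taking $2k$--th roots proves the first assertion.

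For the second assertion restrict to real functions and even $k$. Positive homogeneity $\|\la f\|_{E_k}=|\la|\,\| f\|_{E_k}$ is immediate from (\ref{f:E_k_norm}), and $\| f\|_{E_k}=0$ iff $f\equiv 0$ is Lemma~\ref{l:zero_E_k}; so only the triangle inequality has to be upgraded to have $\| f\|_{E_k},\| g\|_{E_k}$ in place of $\| |f| \|_{E_k},\| |g| \|_{E_k}$. I would rerun the expansion above (now $\ov{f+g}=f+g$) and estimate each term $\sum_x (f\c f)(x)^{n_1}(f\c g)(x)^{n_2}(g\c f)(x)^{n_3}(g\c g)(x)^{n_4}$ by passing to absolute values and applying the H\"{o}lder inequality with exponents $k/n_1,\dots,k/n_4$; this reduces matters to $\sum_x |(f\c f)(x)|^k$, $\sum_x |(f\c g)(x)|^k$, $\sum_x |(g\c f)(x)|^k$, $\sum_x |(g\c g)(x)|^k$. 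As $k$ is even and the functions real, all four equal the corresponding sums without moduli; the first and last are then exactly $\| f\|_{E_k}^{2k}$ and $\| g\|_{E_k}^{2k}$, while for the mixed ones formula (\ref{f:scalar_C}) of Lemma~\ref{l:commutative_C} (with $l=k$, all $f_i=f$, all $g_i=g$) gives
$$
    \sum_x (f\c g)(x)^k = \sum_{x_1,\dots,x_{k-1}} \Cf_k (f) (x_1,\dots,x_{k-1})\, \Cf_k (g) (x_1,\dots,x_{k-1}) \,,
$$
and the Cauchy--Schwarz inequality together with (\ref{f:E_k_norm''}) yields $\sum_x (f\c g)(x)^k \le \| f\|_{E_k}^k \| g\|_{E_k}^k$ (likewise for $g\c f$, since $(g\c f)(x)=(f\c g)(-x)$). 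Substituting these bounds, each term is at most $\| f\|_{E_k}^{2n_1+n_2+n_3}\| g\|_{E_k}^{2n_4+n_2+n_3}$, and summing with the multinomial coefficients and applying (\ref{f:comb2}) exactly as before gives $\E_k(f+g)\le (\| f\|_{E_k}+\| g\|_{E_k})^{2k}$, i.e. the triangle inequality.

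The only delicate point is the last estimate: invoking Lemma~\ref{l:E_k_prod} as a black box would produce only the weaker bound with $\| |f| \|_{E_k},\| |g| \|_{E_k}$, and these may strictly exceed $\| f\|_{E_k},\| g\|_{E_k}$, so that would not establish the triangle inequality for the norm. One must stop at the Cauchy--Schwarz step occurring inside the proof of Lemma~\ref{l:E_k_prod} and not carry out the final H\"{o}lder step that inserts the moduli. Everything else is routine bookkeeping once the multinomial coefficients are grouped so that Lemma~\ref{l:comb} applies.
$\hfill\Box$
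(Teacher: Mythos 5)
Your argument is correct, and the first assertion follows the paper's proof exactly: multinomial expansion of $\E_k(f+g)$, termwise application of Lemma~\ref{l:E_k_prod}, and Lemma~\ref{l:comb} with $l=2$ to resum the multinomial coefficients.

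For the second assertion, however, your warning that Lemma~\ref{l:E_k_prod} cannot be invoked as a black box rests on a misreading of that lemma. The clause ``if $k\ge 2$ is even and all functions are real then one can remove modulus from formula~(\ref{f:E_k_prod})'' refers to the moduli $|f_j|$, $|f'_j|$ inside the $E_k$-norms on the right-hand side, not the absolute value around the sum on the left: dropping the latter would only weaken the inequality, whereas dropping the former genuinely strengthens it since one always has $\|f\|_{E_k}\le\||f|\|_{E_k}$. Thus for real functions and even $k$ the lemma already gives $\bigl|\sum_x\prod_j(f_j\circ f'_j)(x)\bigr|\le\prod_j\|f_j\|_{E_k}\|f'_j\|_{E_k}$, and the paper's treatment of the second assertion is precisely a one-line invocation of this. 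Your self-contained re-derivation is nevertheless correct and arguably more elementary: a direct H\"older with exponents $k/n_1,\dots,k/n_4$ on the four convolutions, then formula~(\ref{f:scalar_C}) and Cauchy--Schwarz to handle the cross terms $\sum_x(f\circ g)^k$, avoids passing through the Cauchy--Schwarz-over-$\Cf_{k+1}$ step used inside the proof of Lemma~\ref{l:E_k_prod}. Both routes land on the same termwise bound $\|f\|_{E_k}^{2n_1+n_2+n_3}\|g\|_{E_k}^{n_2+n_3+2n_4}$, and Lemma~\ref{l:comb} finishes identically.
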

\begin{proof}
We have
$$
    \sigma:= \| f + g\|^{2k}_{E_k} = \sum_{x} ((\ov{f}\c f)(x) + (\ov{f}\c g)(x) + (\ov{g}\c f) (x) + (\ov{g}\c g)(x))^k
        =
$$
$$
        =
            \sum_{n_1+n_2+n_3+n_4=k} \binom{k}{n_1,n_2,n_3,n_4}
                \sum_x (\ov{f}\c f)^{n_1} (x) (\ov{f}\c g)^{n_2}(x) (\ov{g}\c f)^{n_3} (x) (\ov{g}\c g)^{n_4} (x) \,.
$$
%where $\binom{k}{n_1,n_2,n_3,n_4} = \frac{k!}{n_1! n_2! n_3! n_4!}$.
Using Lemma \ref{l:E_k_prod} and Lemma \ref{l:comb} in the case $l=2$, we get
$$
    \sigma \le \sum_{n_1+n_2+n_3+n_4=k} \binom{k}{n_1,n_2,n_3,n_4}
        \| |f| \|^{2n_1+n_2+n_3}_{E_k} \| |g| \|^{n_2+n_3+2n_4}_{E_k}
        =
$$
$$
        =
            \sum_{n+m=2k} \frac{(2k)!}{n! m!} \| |f| \|^{n}_{E_k} \| |g| \|^{m}_{E_k}
                = (\| |f| \|_{E_k} + \| |g| \|_{E_k})^{2k}
$$
as required.

If $f,g$ are real functions and $k$ is even then we apply the second part of Lemma \ref{l:E_k_prod},
and obtain $\| f + g\|_{E_k} \le \| f \|_{E_k} + \| g \|_{E_k} \,.$
Also Lemma \ref{l:zero_E_k} says that $\| f \|_{E_k} = 0$ iff $f\equiv 0$ in the case.
%    $\| \cdot \|_{E_k}$ is a norm.
This completes the proof.
$\hfill\Box$
\end{proof}

%\bigskip
%\section{The proof of the main result}
\section{The proof of Theorem \ref{t:BW_D_introduction}}
\label{sec:proof}

In the next two sections we prove Theorems \ref{t:R_E_3_introduction}, \ref{t:BW_D_introduction}.
We begin with the stronger Theorem \ref{t:BW_D_introduction} and after that use similar arguments, combining with the results of section \ref{sec:norms} to get Theorem \ref{t:R_E_3_introduction}.

\bigskip

First of all express quantities $D^{+} (A)$, $D^\times (A)$ in terms of the energies $\E_3^{+} (A,B)$, $\E_3^{\times} (A,B)$. Consider the case of $D^{+} (A)$ the second variant is similar.
For any finite set $A\subset \R$ put
\begin{equation}\label{def:q}
    q^{+} (A) := \max_{B\neq \emptyset} \frac{\E^{+}_3(A,B)}{|A| |B|^2} \,.
\end{equation}
It is easy to see that the maximum in (\ref{def:q}) is attained.
Indeed, shifting we can suppose that $0\in B$, further the size of $B$ is bounded in terms of $|A|$ and by Lemma \ref{l:commutative_C} one has
\begin{equation}\label{f:E_3_fL}
    \E^{+}_3(A,B) = \sum_{(x,y) \in A^2 - \D(A)} \Cf_3 (B) (x,y) \Cf_3 (A) (x,y) \,.
\end{equation}
Whence by induction we show that $B\subseteq k(A-A)$, where $k$ is
%finite
bounded in terms of $|A|$.
Thus the maximum in (\ref{def:q}) is attained.

%Now
Let us
%note
%remark
give
some simple  bounds for $q^{+} (A)$.
In view of (\ref{f:E_CS}), (\ref{f:E_k_preliminalies}), we have, clearly,
\begin{equation}\label{f:q_lower}
    1 \le \frac{\E^{+}_3(A)}{|A|^3} \le q^{+} (A) \le |A| \,.
\end{equation}
More precisely,
%in view of
by
formulas (\ref{f:E_CS}), (\ref{f:E_3_fL}) and the Cauchy--Schwarz inequality, one has
\begin{equation}\label{f:q_E_3_trivial}
    q^{+} (A) \le \frac{(\E^{+}_3 (A))^{1/2}}{|A|} \le |A| \,.
\end{equation}

Now we are ready to show that $D^{+} (A)$ is proportional to $q^{+} (A)$ up to logarithms.

\begin{lemma}
    Let $A\subset \R$ be a finite set.
    Then
\begin{equation}\label{f:D_q}
    q^{+} (A) \lesssim D^{+} (A) \le q^{+} (A) \,.
\end{equation}
\label{l:D_q}
\end{lemma}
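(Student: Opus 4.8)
The plan is to prove the two inequalities in (\ref{f:D_q}) separately, with the right-hand inequality being essentially a Cauchy--Schwarz/dyadic pigeonholing argument and the left-hand one an application of Lemma \ref{l:gen_sigma}.

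For the upper bound $D^{+}(A) \le q^{+}(A)$, I would fix an arbitrary finite set $B\subset\R$ and a threshold $\tau\ge 1$, and set
$$
    P_\tau := \{ s\in A-B ~:~ |A\cap(B+s)| \ge \tau \} \,.
$$
The key observation is that $|A\cap(B+s)| = (A\circ B)(s)$, so $\sum_{s} (A\circ B)(s) = |A||B|$ and, more to the point, $\E^{+}_3(A,B) = \sum_s (A\circ B)(s)^3$. Restricting the last sum to $s\in P_\tau$ and using the lower bound $(A\circ B)(s)\ge\tau$ there gives $\tau^3 |P_\tau| \le \E^{+}_3(A,B) \le q^{+}(A)|A||B|^2$ by the definition (\ref{def:q}) of $q^{+}$. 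Rearranging yields exactly inequality (\ref{f:SzT-type}) with $D^{+}(A)$ replaced by $q^{+}(A)$, hence $D^{+}(A)\le q^{+}(A)$ by the infimum characterisation of $D^{+}(A)$. (No logarithm is lost here.)

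For the lower bound $q^{+}(A) \lesssim D^{+}(A)$, the strategy is the reverse: take $B$ achieving the maximum in (\ref{def:q}), so $q^{+}(A)|A||B|^2 = \E^{+}_3(A,B)$. I would then invoke the second estimate of Lemma \ref{l:gen_sigma}, namely $\E^{+}_3(A,B) \ll D^{+}(A)|A||B|^2 \cdot \log(\min\{|A|,|B|\})$; dividing through by $|A||B|^2$ gives $q^{+}(A) \ll D^{+}(A)\log|A|$, i.e. $q^{+}(A)\lesssim D^{+}(A)$. So in fact the whole lemma reduces to Lemma \ref{l:gen_sigma} together with the elementary pigeonholing above. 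If one wanted a self-contained derivation of the bound on $\E^{+}_3(A,B)$ rather than quoting Lemma \ref{l:gen_sigma}, the route would be: write $\E^{+}_3(A,B) = \sum_s (A\circ B)(s)^3$, decompose the range of $(A\circ B)(s)$ into $O(\log|A|)$ dyadic blocks $\{s : 2^{j} \le (A\circ B)(s) < 2^{j+1}\}$, apply (\ref{f:SzT-type}) with $\tau = 2^j$ to bound the size of each block by $D^{+}(A)|A||B|^2 2^{-3j}$, and note that each block contributes $O(2^{3j}\cdot D^{+}(A)|A||B|^2 2^{-3j}) = O(D^{+}(A)|A||B|^2)$, so summing over the dyadic levels loses only the $\log|A|$ factor.

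The main obstacle, such as it is, is purely bookkeeping: ensuring the trivial low-level contribution (terms with $(A\circ B)(s)$ of size $O(1)$) is absorbed correctly — there the bound $\sum_s (A\circ B)(s) = |A||B|$ gives a contribution $O(|A||B|)$, which is dominated by $q^{+}(A)|A||B|^2$ since $|B|\ge 1$ and $q^{+}(A)\ge 1$ by (\ref{f:q_lower}) — and making sure the set $B$ attaining the maximum in (\ref{def:q}) is indeed legitimate, which was already argued just before the lemma statement via (\ref{f:E_3_fL}) and the containment $B\subseteq k(A-A)$. Everything else is a one-line consequence of the definitions.
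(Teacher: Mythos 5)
Your proposal is correct and matches the paper's proof essentially verbatim: the upper bound $D^{+}(A)\le q^{+}(A)$ comes from restricting the sum $\E_3^{+}(A,B)=\sum_s(A\circ B)^3(s)$ to the level set $S_\tau$, and the lower bound $q^{+}(A)\lesssim D^{+}(A)$ is read off from the second estimate in Lemma \ref{l:gen_sigma}. The only cosmetic difference is that you quantify over all $B,\tau$ and invoke the infimum characterisation of $D^{+}$, while the paper takes a near-extremal pair $(B,\tau)$ directly; your optional dyadic re-derivation of Lemma \ref{l:gen_sigma} is extra but consistent with what that lemma encodes.
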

\begin{proof}
    The lower bound in (\ref{f:D_q}) immediately  follows from Lemma \ref{l:gen_sigma}.
    Now suppose that for some $B \subset \R$ and $\tau \ge 1$ the following holds
    $D^{+} (A) |A| |B|^2 = \tau^3 |S_\tau (A,B)|$,
    where
$$
    S_\tau (A,B) := \bigl\{ s\in A-B ~:~ |A\cap (B+s)| \ge \tau \big\} \,.
$$
%    By the definition, we get $|S_{2\tau} (A,B)| \le D^{+} (A) |A| |B|^2 / (8 \tau^3)$.
    Hence
$$
%    \E_3^{+} (A,B) = \sum_{s} (A\c B)^3 (s) \ge \tau^3 ( |S_\tau (A,B)| - |S_{2\tau} (A,B)|)
    %\gg |A| |B|^2 D^{+} (A) \,.
    \E_3^{+} (A,B) = \sum_{s} (A\c B)^3 (s) \ge \sum_{s \in S_\tau (A,B)} (A\c B)^3 (s)
        \ge \tau^3 |S_\tau (A,B)| = |A| |B|^2 D^{+} (A) \,.
$$
    The last formula implies that $q^{+}(A) \ge D^{+} (A)$.
%as required.
This completes the proof.
$\hfill\Box$
\end{proof}

\bigskip

Lemma above, combining with Lemmas \ref{l:gen_sigma}, \ref{l:gen_sigma_m} and inequality (\ref{f:q_E_3_trivial}), implies that there is a close connection between quantities $D^{+} (A)$, $D^\times (A)$ and $\E^{+}_3 (A)$, $\E^{\times}_3 (A)$, correspondingly.

\begin{corollary}
    Let $A \subset \R$ be a finite set. Then
$$
    |A|^2 (D^{+} (A))^2 \le \E^{+}_3 (A) \lesssim D^{+} (A) |A|^3 \,.
$$
    The same holds for $D^\times (A)$.
\end{corollary}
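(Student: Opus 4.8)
The plan is to read off both inequalities from the material already assembled in this section, so that essentially no new computation is needed; the only care required is to chain the right estimates in the right direction.

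First I would establish the upper bound $\E^{+}_3 (A) \lesssim D^{+} (A) |A|^3$. This is just the second assertion of Lemma \ref{l:gen_sigma} applied with $A_1 = A_2 = A$: it gives $\E^{+}_3 (A) = \E^{+}_3 (A,A) \ll D^{+} (A) |A|\cdot |A|^2 \cdot \log |A|$, which is exactly $\E^{+}_3 (A) \lesssim D^{+} (A) |A|^3$ in the $\lesssim$ notation.

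For the lower bound $|A|^2 (D^{+} (A))^2 \le \E^{+}_3 (A)$ I would combine the right-hand inequality of Lemma \ref{l:D_q} with the trivial bound (\ref{f:q_E_3_trivial}). Indeed, Lemma \ref{l:D_q} gives $D^{+} (A) \le q^{+} (A)$, and (\ref{f:q_E_3_trivial}) gives $q^{+} (A) \le (\E^{+}_3 (A))^{1/2} / |A|$; multiplying these (both quantities are nonnegative) yields $D^{+} (A) \le (\E^{+}_3 (A))^{1/2}/|A|$, and squaring produces $|A|^2 (D^{+} (A))^2 \le \E^{+}_3 (A)$. Here there is genuinely nothing delicate: the one point worth a sentence is that $D^{+}(A) \le q^{+}(A)$ already encodes the content of Definition \ref{def:SzT-type}, since the proof of Lemma \ref{l:D_q} picks the extremal pair $(B,\tau)$ realizing the infimum and bounds $\E^{+}_3(A,B)$ from below by $\tau^3 |S_\tau(A,B)|$.

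Finally, the multiplicative statement $|A|^2 (D^\times (A))^2 \le \E^{\times}_3 (A) \lesssim D^\times (A) |A|^3$ follows by the very same argument with the multiplicative substitutes in place of the additive ones: Lemma \ref{l:gen_sigma_m} for the upper bound, and the multiplicative analogues of Lemma \ref{l:D_q} and of (\ref{f:q_E_3_trivial}) (obtained from (\ref{def:q})--(\ref{f:q_E_3_trivial}) verbatim after replacing $+$ by $\times$, $A-A$ by $A/A$, etc.) for the lower bound. I do not expect any real obstacle; if anything, the only thing to be careful about is keeping the direction of each inequality straight, since $D^{+}(A)$ sits between $q^{+}(A)$ (up to logs, from below) and $q^{+}(A)$ (from above), and it is the \emph{upper} estimate $D^{+}(A)\le q^{+}(A)$ that feeds the lower bound on $\E^{+}_3(A)$, while the \emph{lower} estimate $q^{+}(A)\lesssim D^{+}(A)$ (equivalently Lemma \ref{l:gen_sigma}) feeds the upper bound.
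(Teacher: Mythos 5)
Your proposal is correct and coincides with the paper's own (implicit) derivation: the upper bound comes from Lemma \ref{l:gen_sigma} with $A_1=A_2=A$, and the lower bound from chaining $D^{+}(A)\le q^{+}(A)$ (Lemma \ref{l:D_q}) with $q^{+}(A)\le(\E^{+}_3(A))^{1/2}/|A|$ (inequality (\ref{f:q_E_3_trivial})), with the multiplicative case handled verbatim via Lemma \ref{l:gen_sigma_m} and the multiplicative analogues. No gaps; this is exactly the chain of estimates the paper intends.
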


\bigskip

Secondly, let us note that the quantities  $D^{+}$, $D^\times$ have some kind of "subadditive"\, property.
Actually, our results hold in a general abelian group $\Gr$.

\begin{lemma}
    Let $A_1,\dots, A_k \subset \R$ be finite sets.
    Then
$$
    D(A_1 \cup \dots \cup A_k) \lesssim k^3 \cdot \frac{1}{|A|} \sum_{j=1}^k D(A_j) |A_j| \,.
$$
    where $D=D^+$ or $D^\times$.
\label{l:subadd_D}
\end{lemma}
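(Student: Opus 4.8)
The plan is to reduce the statement about $D(A_1\cup\dots\cup A_k)$ to the corresponding statement about $q=q^{+}$ (or $q^{\times}$), using Lemma~\ref{l:D_q}, which says $q(A)\lesssim D(A)\le q(A)$; up to logarithmic factors it suffices to prove $q(A_1\cup\dots\cup A_k)\lesssim k^3\cdot\frac{1}{|A|}\sum_{j=1}^k q(A_j)|A_j|$, where $A=A_1\cup\dots\cup A_k$ (and where I may assume the $A_j$ disjoint, or just bound $|A|$ from below by the size of the union and use $|A_j|\le|A|$). Since the argument is identical for $D^{+}$ and $D^{\times}$ (replace sums by products, $A-B$ by $A/B$, $B+s$ by $sB$), I will write it for $D^{+}$ only.

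The core computation is to expand the energy $\E_3^{+}(A,B)$ for $A=A_1\cup\dots\cup A_k$ in terms of the pieces. Using the convolution description $\E_3^{+}(A,B)=\sum_s (A\c B)^3(s)$ together with $A\c B=\sum_{j=1}^k A_j\c B$ (valid when the $A_j$ are disjoint, so that $A=\sum_j A_j$ as functions), I get
\begin{equation*}
    \E_3^{+}(A,B)=\sum_s\Big(\sum_{j=1}^k (A_j\c B)(s)\Big)^3 \le k^2\sum_{j=1}^k\sum_s (A_j\c B)^3(s)=k^2\sum_{j=1}^k \E_3^{+}(A_j,B)\,,
\end{equation*}
where the inequality is the power-mean (or H\"older) inequality $(\sum_{j=1}^k t_j)^3\le k^2\sum_{j=1}^k t_j^3$ applied pointwise in $s$ to the nonnegative quantities $t_j=(A_j\c B)(s)$. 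Now by definition of $q^{+}$, $\E_3^{+}(A_j,B)\le q^{+}(A_j)|A_j||B|^2$, so
\begin{equation*}
    \E_3^{+}(A,B)\le k^2|B|^2\sum_{j=1}^k q^{+}(A_j)|A_j|\,.
\end{equation*}
Dividing by $|A||B|^2$ and taking the maximum over $B\neq\emptyset$ gives $q^{+}(A)\le k^2\cdot\frac{1}{|A|}\sum_{j=1}^k q^{+}(A_j)|A_j|$, which is even a bit stronger than the claimed $k^3$; feeding this back through Lemma~\ref{l:D_q} produces $D^{+}(A)\le q^{+}(A)\lesssim k^2\cdot\frac{1}{|A|}\sum_j D^{+}(A_j)|A_j|\lesssim k^3\cdot\frac{1}{|A|}\sum_j D^{+}(A_j)|A_j|$, as required. (Alternatively one can avoid $q^{+}$ entirely and argue directly from Definition~\ref{def:SzT-type}: writing $A\cap(B+s)=\bigsqcup_j A_j\cap(B+s)$, the event $|A\cap(B+s)|\ge\tau$ forces $|A_j\cap(B+s)|\ge\tau/k$ for some $j$, so $S_\tau(A,B)\subseteq\bigcup_j S_{\tau/k}(A_j,B)$, and summing the bounds from~\eqref{f:SzT-type} for each $A_j$ gives the desired inequality with the $k^3$ coming from the factor $k^3$ in $(\tau/k)^{-3}$.)

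The main thing to be careful about, rather than a genuine obstacle, is the disjointness/multiplicity issue: if the $A_j$ overlap, then $A\le\sum_j A_j$ only as functions with multiplicity, and one should either pass to a disjoint refinement (which only decreases the right-hand side since $D$ and $q$ are monotone-ish and $|A_j|$ only shrinks) or simply absorb the harmless constant. I also want to double-check the direction of all the inequalities in Lemma~\ref{l:D_q} so that the logarithmic losses accumulate in the right direction — they do, since we only ever use $D^{+}\le q^{+}$ (no log) in the last step and $q^{+}\lesssim D^{+}$ to translate the hypotheses $D^{+}(A_j)$ into $q^{+}(A_j)$ (one log loss per term, uniformly). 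Finally, the choice of exponent $3$ in the power-mean step is exactly what matches the cubic denominator $\tau^3$ in the Szemer\'edi--Trotter-type definition, which is why the same proof works verbatim in the multiplicative setting and in an arbitrary abelian group $\Gr$.
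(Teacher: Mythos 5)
Your main argument via $q^{+}$ and H\"older is essentially the paper's second proof of this lemma, except that you apply the power-mean inequality pointwise in $s$ and thereby obtain the slightly sharper constant $k^2$ where the paper writes $k^3$; the reduction through Lemma~\ref{l:D_q} (using $D\le q$ on the left and $q\lesssim D$ on the right) is exactly the paper's. Your parenthetical alternative, $S_\tau(A,B)\subseteq\bigcup_j S_{\tau/k}(A_j,B)$ followed by summing the Szemer\'edi--Trotter bounds, is a streamlined version of the paper's first proof, which achieves the same thing via a dyadic pigeonholing over levels $\Delta$; your shortcut is sound since $|A_j\cap(B+s)|$ is an integer, so applying Definition~\ref{def:SzT-type} at threshold $\tau/k<1$ reduces to threshold $1$ and only weakens the bound.
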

\begin{proof}
Let us consider the case of $D^{\times}$, the situation with $D^{+}$ is similar.
We give even two proofs.
Put $A=A_1 \cup \dots \cup A_k$.
Take any set $G\subset \R$ and a real number $\tau \ge 1$.
We need to estimate the size of the set
$$
    S_\tau = S_\tau (A,G) := \bigl\{ s\in A/G ~:~ |A\cap sG| \ge \tau \big\} \,.
$$
For any $s\in S_\tau$ one has
\begin{equation}\label{tmp:28.04.2016_1}
    \tau \le |A\cap sG| \le \sum_{j=1}^k |A_j \cap sG| \lesssim \sum_{j\in \Omega_\D (s)} |A_j \cap sG|
        \le
            2 \D(s) |\Omega_\D (s)| \,,
\end{equation}
    where we write $\D(s)$
    for
    some number of the form $2^j$, $j\ge 0$ such that
    $$
        \Omega_\D (s) = \{ j ~:~ \D(s) < |A_j \cap sG| \le 2 \D(s) \} \subseteq [k]
    $$
    and
    $\tau k^{-1} \lesssim \D(s)$.
    The existence of the number $\D(s)$ and the set $\Omega_\D (s)$ follows from the pigeonhole principle.
    Using the pigeonhole principle once more time, we find a set $S'_\tau \subseteq S_\tau$ such that
    $|S'_\tau| \gtrsim |S_\tau|$,
    %inequality (\ref{tmp:28.04.2016_1}) holds for any $s\in S'_\tau$, further we find
    further, the number $\D$ with
    $$
        \Omega_\D (s) = \{ j ~:~ \D < |A_j \cap sG| \le 2 \D \} \subseteq [k] \,,
    $$
%    and
\begin{equation}\label{tmp:23.03.2016_1}
    \tau k^{-1} \lesssim \D
    %\,.
\end{equation}
    and such that  for any $s\in S'_\tau$, we have an analog of (\ref{tmp:28.04.2016_1}), namely,
\begin{equation*}\label{tmp:28.04.2016_1}
    \tau \le |A\cap sG|
    %\le \sum_{j=1}^k |A_j \cap sG|
            \lesssim
                %\sum_{j\in \Omega (s)} |A_j \cap sG|
        %\le 2
            \D |\Omega_\D (s)| \,.
\end{equation*}
    By
    %the
    our
    construction, we have $S'_\tau \subseteq \bigcup_{j=1}^k S_\D (A_j,G)$.
    In view of (\ref{tmp:23.03.2016_1}),
    %we have
    we obtain
%    the last inequality, we have
$$
    |S_\tau| \lesssim |S'_\tau| \le \sum_{j=1}^k |S_\D (A_j, G)| \le \frac{|G|^2}{\D^3} \sum_{j=1}^k D(A_j) |A_j|
        \lesssim
            \frac{|G|^2 k^3}{\tau^3} \sum_{j=1}^k D(A_j) |A_j|
                =
$$
$$
                =
                    \frac{|G|^2 |A| k^3}{\tau^3} \cdot \frac{1}{|A|} \sum_{j=1}^k D(A_j) |A_j| \,.
$$
By the definition it means that $D(A) \lesssim \frac{k^3}{|A|} \sum_{j=1}^k D(A_j) |A_j|$ as required.

Now let us give another proof via quantity $q$.
Applying the H\"{o}lder inequality and Lemmas \ref{l:gen_sigma}, \ref{l:gen_sigma_m}, we have
$$
    \E_3 (\bigcup_{i=1}^k A_i,B) \le \sum_x (\sum_{i=1}^k A_i \c B)^3 (x)
        = \sum_x \left( \sum_{i=1}^k (A_i \c B) (x) \right)^3
            =
$$
$$
            =
                \sum_{i_1,i_2,i_3=1}^k \sum_x (A_{i_1} \c B) (x) (A_{i_2} \c B) (x) (A_{i_3} \c B) (x)
                    \le
                        \left( \sum_{i=1}^k \left( \sum_x (A_i \c B)^3 (x) \right)^{1/3} \right)^3
                            \le
$$
$$
                            \le
                                k^3 \sum_{i=1}^k \sum_x (A_i \c B)^3 (x)
                                    \lesssim
                                        k^3 |A| |B|^2 \cdot \frac{1}{|A|} \sum_{j=1}^k D(A_j) |A_j| \,.
$$
Thus, by Lemma \ref{l:D_q}, we obtain $D(A) \le q(A) \lesssim \frac{k^3}{|A|} \sum_{j=1}^k D(A_j) |A_j|$,
where $q$ equals $q^{+}$ or $q^\times$, correspondingly to $D$.
This completes the proof.
$\hfill\Box$
\end{proof}

\bigskip

In \cite{KS2} we considered two another characteristics of
$A$.
Put
$$
    \Sym^\times_{t} (Q,R) = \{ x ~:~ |Q \cap xR^{-1}| \ge t \} \,,
$$
and
\begin{equation}\label{f:d_r}
    d^{+} (A) = \min_{t>0}\, \min_{\emptyset \neq Q,R \subset \R \setminus \{0\}
                    % ~:~ A\subseteq \Sym^\times_{t} (Q,R)
                    } \,
        \frac{|Q|^2 |R|^2}{|A| t^3} \,,
\end{equation}
    where the second minimum in (\ref{f:d_r}) is taken over any $Q,R$ such that $A\subseteq \Sym^\times_{t} (Q,R)$
    and $\max\{ |Q|,|R| \} \ge |A|$.

Similarly, for any sets $Q,R$ and a real number $t>0$ put
$$
    \Sym^+_{t} (Q,R) := \{ x ~:~ |Q\cap (x-R)| \ge t \}
$$
and consider the following quantity
\begin{equation}\label{f:d_r+}
    d^\times (A) := \min_{t>0}\, \min_{\emptyset \neq Q,R \subset \R \setminus \{0\}
                    % ~:~ A\subseteq \Sym^\times_{t} (Q,R)
                    } \,
        \frac{|Q|^2 |R|^2}{|A| t^3} \,,
\end{equation}
    where the second minimum in (\ref{f:d_r+}) is taken over any $Q,R$ such that
    $A\subseteq \Sym^+_{t} (Q,R)$
    and $\max\{ |Q|,|R| \} \ge |A|$.
    It is easy to see \cite{KS2} that $1 \le d^{+} (A), d^{\times} (A) \le |A|$.
    In \cite{KS2} the following result was proved.

\begin{lemma}
    Let $A\subset \R$ be a finite set.
    Then $A$ is of additive Szemer\'{e}di--Trotter type with $O(d^{+} (A))$
    and $A$ is of multiplicative Szemer\'{e}di--Trotter type with $O(d^{\times} (A))$.
\label{l:d_r}
\end{lemma}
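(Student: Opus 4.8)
The plan is to reduce the statement to a single point--line incidence count and then invoke the Szemer\'{e}di--Trotter theorem; I will treat the additive case, the multiplicative one being entirely parallel. First I would fix a triple $(Q,R,t)$ realising the minimum in the definition of $d^{+}(A)$, so that $A\subseteq \Sym^\times_t(Q,R)$, $Q,R\subset\R\setminus\{0\}$, $\max\{|Q|,|R|\}\ge|A|$, and $d^{+}(A)=|Q|^2|R|^2/(|A|t^3)$. Since $\Sym^\times_t(Q,R)=\Sym^\times_t(R,Q)$ I may assume $|Q|\ge|R|$, hence $|Q|\ge|A|$; moreover any element of the nonempty set $\Sym^\times_t(Q,R)$ has at most $|R|$ representations as $qr$, so $t\le|R|$. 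Now fix an arbitrary finite $B\subset\R$ and a real $\tau\ge1$, put $S:=\{s\in A-B:\ |A\cap(B+s)|\ge\tau\}$, and observe that it suffices to prove $|S|\ll|Q|^2|R|^2|B|^2/(t^3\tau^3)$, which is exactly the required bound $D^{+}(A)\ll d^{+}(A)$ after substituting the value of $d^{+}(A)$. I may assume $S\neq\emptyset$; then $\tau\le|A|$, $\tau\le|B|$, and double-counting pairs $(a,b)\in A\times B$ gives $\tau|S|\le|A||B|$.

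The heart of the argument is the incidence construction. For each $s\in S$ there are at least $\tau$ elements $a\in A\cap(B+s)$; each such $a$ lies in $\Sym^\times_t(Q,R)$ and so has at least $t$ representations $a=qr$ with $q\in Q$, $r\in R$, and distinct values of $a$ use disjoint sets of representations while determining $b:=a-s\in B$. Hence $\sum_{b\in B}|\{(q,r)\in Q\times R:\ qr=b+s\}|\ge t\tau$ for every $s\in S$. Summing over $s\in S$, the left-hand side is precisely the number $I(\mathcal P,\mathcal L)$ of incidences between the point set $\mathcal P=R\times S\subset\R^2$ and the family of lines $\mathcal L=\{\,\ell_{q,b}:\ q\in Q,\ b\in B\,\}$, where $\ell_{q,b}=\{(x,y):\ y=qx-b\}$; these lines are pairwise distinct, so $|\mathcal L|=|Q||B|$ and $|\mathcal P|=|R||S|$. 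Thus $t\tau|S|\le I(\mathcal P,\mathcal L)$, and the Szemer\'{e}di--Trotter theorem yields $t\tau|S|\ll(|Q||R||B||S|)^{2/3}+|R||S|+|Q||B|$.

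It then remains to run a short case analysis on which of the three terms dominates. If the first term dominates, solving for $|S|$ gives $|S|\ll|Q|^2|R|^2|B|^2/(t\tau)^3$ at once. If the third term dominates, then $|S|\ll|Q||B|/(t\tau)$, and since $t\le|R|$ and $\tau^2\le|A||B|\le|Q||B|$ we get $t^2\tau^2\le|Q||R|^2|B|$, which again gives the claimed bound. If the second term dominates, then $t\tau\ll|R|$, and combining $|S|\le|A||B|/\tau\le|Q||B|/\tau$ with the trivial $|R|\le|Q|\le|Q||B|\tau$ yields $|S|\le|Q|^2|B|^2/|R|\ll|Q|^2|R|^2|B|^2/(t\tau)^3$. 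In every case $|S|\ll|Q|^2|R|^2|B|^2/(t^3\tau^3)=d^{+}(A)|A||B|^2/\tau^3$, as wanted. For the multiplicative statement one uses $A\subseteq\Sym^+_t(Q,R)$ to replace the bilinear relation $qr=b+s$ by $q+r=sb$ with $s\in A/B$; this is again bilinear, so it produces a genuine point--line incidence problem and the same computation applies verbatim with $d^{\times}(A)$ in place of $d^{+}(A)$. The only real content is the choice of the incidence configuration --- deciding which variables index the slopes of the lines and which index the points, so that the popularity hypotheses collapse into the clean lower bound $I\ge t\tau|S|$; I expect this packaging, rather than any genuine inequality, to be the main obstacle, everything afterwards being Szemer\'{e}di--Trotter plus bookkeeping.
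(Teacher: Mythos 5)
The paper does not actually supply a proof of this lemma---it is quoted from \cite{KS2}---so there is no internal argument to compare yours against. That said, your reconstruction is correct and is exactly the Szemer\'edi--Trotter incidence argument that the definitions of $d^{+}$, $d^{\times}$ are built to feed into: the choice of point set $R\times S$, lines $y=qx-b$ with $(q,b)\in Q\times B$, the lower bound $I\ge t\tau|S|$ from $A\subseteq\Sym^\times_t(Q,R)$ and popularity of the $s\in S$, and the three-case bookkeeping (using $t\le|R|$, $\tau\le\min\{|A|,|B|\}$, and $|A|\le|Q|$ to absorb the two linear terms of Szemer\'edi--Trotter) all check out. The multiplicative case transposes cleanly because $q+r=sb$ is again bilinear, with $(s,r)$ playing the role of the point and $(b,q)$ the slope--intercept pair. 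This is almost certainly the argument in \cite{KS2}.
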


\bigskip

Now we can formulate a new result, which implies Theorem \ref{t:BW'} with $\d= 1/5$
if one combines Theorem \ref{t:BW_D} below  with Lemmas \ref{l:gen_sigma}, \ref{l:gen_sigma_m}.

\begin{theorem}
    Let $A\subset \R$ be a finite set and $\d = 2/5$.
    Then there are two disjoint subsets $B$ and $C$ of $A$ such that $A = B\sqcup C$ and
$$
    \max\{ D^{+} (B), D^{\times} (C)\} \lesssim  |A|^{1-\d} \,.
$$
\label{t:BW_D}
\end{theorem}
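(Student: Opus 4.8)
The plan is to build the multiplicative part $C$ by an iterated peeling procedure and put $B := A\setminus C$. Write $n = |A|$ and $K := |A|^{1-\d} = |A|^{3/5}$. By Lemma \ref{l:D_q} the quantity $D^{+}$ (resp. $D^{\times}$) agrees with $q^{+}$ (resp. $q^{\times}$) up to logarithmic factors, so it is enough to split $A = B\sqcup C$ with $q^{+}(B)\lesssim K$ and $q^{\times}(C)\lesssim K$. I will obtain $C$ as a union of $(\log n)^{O(1)}$ subsets $C_j\subseteq A$; by the second proof of Lemma \ref{l:subadd_D} (the one through the quantity $q$), if each $C_j$ satisfies $D^{\times}(C_j)\lesssim K$ then $D^{\times}(C)\lesssim (\log n)^{O(1)} K\lesssim K$, which is admissible.

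The engine is an extraction step: whenever $Y\subseteq A$ has $D^{+}(Y)\gg K$, peel off a subset $Y'\subseteq Y$ which is additively structured and hence, by sum--product, multiplicatively thin. By Lemma \ref{l:D_q} there is a non-empty $Z$ with $\E^{+}_3(Y,Z)\gtrsim D^{+}(Y)\,|Y|\,|Z|^2$; writing $\E^{+}_3(Y,Z)=\sum_s (Y\c Z)(s)^3$ and using $(Y\c Z)(s)=|Y\cap(Z+s)|$, a dyadic pigeonhole in $s$ produces a threshold $\tau\ge 1$ and a set $P=\{s:\tau<(Y\c Z)(s)\le 2\tau\}$ with $\tau^3|P|\gtrsim D^{+}(Y)\,|Y|\,|Z|^2$. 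A Balog--Szemer\'{e}di--Gowers--type argument applied to this popular-difference configuration yields $Y'\subseteq Y$ of controlled additive doubling, $|Y'-Y'|\lesssim |Y'|^2/\rho$, with $\rho$ at least a fixed positive power of $D^{+}(Y)$ (up to logarithms and the losses of that step); crucially, the size of $Y'$ is governed by the popular fibre size $\tau$ (itself $\gtrsim\sqrt{K|Z|}$ up to logs), not by $|Y|$.

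Because $Y'$ has a small difference set, a Szemer\'{e}di--Trotter / Elekes--Solymosi estimate — equivalently, exhibiting explicit multiplicative witnessing sets for $d^{\times}(Y')$ and invoking Lemma \ref{l:d_r} — bounds $D^{\times}(Y')$ in terms of $|Y'-Y'|/|Y'|$: an additively structured set cannot be multiplicatively of Szemer\'{e}di--Trotter type with a large parameter. Feeding in $|Y'-Y'|\lesssim |Y'|^2/\rho$ with $\rho$ a power of $D^{+}(Y)>K$, together with the bound on $|Y'|$ from the extraction, gives $D^{\times}(Y')\lesssim K$. We then declare $Y'$ a piece of $C$ and recurse on $Y\setminus Y'$, choosing $Y'$ at the top dyadic popularity scale so that $D^{+}(Y\setminus Y')$ (or the governing triple energy) drops by a constant factor each round; hence only $(\log n)^{O(1)}$ pieces occur, as needed above, and the recursion stops once the remaining set $Y$ has $D^{+}(Y)\lesssim K$, at which point $B:=Y$.

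The main obstacle is the quantitative compatibility of the two inputs: the extracted $Y'$ has only the size of a popular fibre, not of $Y$, and one must verify that the additive-doubling bound it carries is strong enough for the incidence estimate to force $D^{\times}(Y')\lesssim K=|A|^{3/5}$; running $|Y|,|Z|,\tau,\rho$ through the dyadic pigeonhole, the Balog--Szemer\'{e}di--Gowers step and the sum--product bound, one finds that the two constraints are simultaneously satisfiable with $\d=2/5$. The recombination through Lemma \ref{l:subadd_D}, the termination of the recursion, and the logarithmic bookkeeping are then routine. (The norm structure of Section \ref{sec:norms} — that $(\E^{+}_3)^{1/6}$ and $(\E^{\times}_3)^{1/6}$ are norms, Proposition \ref{p:E_k-norm} — permits an alternative, functional-analytic route in which the greedy peeling is replaced by a convex-separation argument, infeasibility of a fractional splitting yielding a dual certificate that contradicts a sum--product inequality; this is also what upgrades the statement to the $\E_3$-form, Theorem \ref{t:R_E_3_introduction}.) Finally, combining $\max\{D^{+}(B),D^{\times}(C)\}\lesssim |A|^{3/5}$ with Lemmas \ref{l:gen_sigma}, \ref{l:gen_sigma_m} recovers Theorem \ref{t:BW'}.
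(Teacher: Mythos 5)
Your high-level architecture matches the paper (iteratively peel pieces of small $D$-value of one type from the part where the other $D$-value is large, then recombine by subadditivity, $\d=2/5$), so the roles of $+$ and $\times$ being swapped relative to the paper is only cosmetic. But the engine driving the extraction step is genuinely different, and that is where the proposal has serious gaps.

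The paper's extraction is far more direct than yours. If $D^{\times}(C')>|A|/M$, the definition of $D^\times$ hands you an explicit witness pair $(G,\tau)$ with $S_\tau=\{s\in C'/G: |C'\cap sG|\ge\tau\}$ large, and a single pigeonhole in $a\in C'$ puts a subset $A'\subseteq C'$ inside $\Sym^\times_q(S_\tau,G)$ with $|A'|q\sim\tau|S_\tau|$. Lemma \ref{l:d_r} then gives $D^+(A')\ll d^+(A')\le |S_\tau|^2|G|^2/(q^3|A'|)$, and plugging in the lower bound on $|S_\tau|$ yields simultaneously $D^+(A')\lesssim M$ and $|A'|\gtrsim |C'|M^{-1/2}$, with no losses beyond logs. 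By contrast, you route the same extraction through a dyadic popular-difference set and then a Balog--Szemer\'{e}di--Gowers step to produce a subset $Y'$ of small doubling, followed by an Elekes--Solymosi type incidence bound. Every serious quantitative version of BSG loses polynomial factors, and $\d=2/5$ is a rather tight exponent (the paper shows one cannot exceed $3/4$); you explicitly acknowledge that checking ``the two constraints are simultaneously satisfiable'' is the hard part, but you do not carry out the computation. As stated, there is no evidence that the BSG losses are affordable, and I do not believe they are; the paper's route through $d^+$ and Lemma \ref{l:d_r} is precisely designed to avoid BSG.

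Second, the recombination accounting is wrong. You assert that choosing $Y'$ at the top dyadic popularity scale makes the governing triple energy ``drop by a constant factor each round,'' so only $(\log n)^{O(1)}$ pieces are peeled. No argument is given for this geometric drop, and it is not what happens in the paper: there the peeled pieces satisfy $|D_j|\gtrsim |C_j|M^{-1/2}$, so the number of rounds is $k\lesssim M^{1/2}=|A|^{1/5}$, which is polynomial, not polylogarithmic. Handling polynomially many pieces is the reason the paper needs the more delicate two-level dyadic bookkeeping (grouping the $D_i$ by the size of $D^+(D_i)$ and applying Lemma \ref{l:subadd_D} twice, at which point the $k^3$ factor in that lemma would otherwise be fatal). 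If you only had $(\log n)^{O(1)}$ pieces the recombination would indeed be trivial, but that premise is unsupported. So the two places your sketch waves its hands — the quantitative BSG step and the claimed geometric termination — are exactly the places where the paper does careful work, and both need to be fixed before this could be accepted as a proof.
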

\begin{proof}
    Let $1\le M \le |A|$ be a parameter which we will choose later.
    Our arguments is a sort of an algorithm.
     We construct a decreasing sequence of sets $C_1=A \supseteq C_2 \supseteq \dots \supseteq C_k$ and an increasing sequence of sets $B_0 = \emptyset \subseteq B_1 \subseteq \dots \subseteq B_{k-1} \subseteq A$ such that
    for any $j=1,2,\dots, k$ the sets $C_j$ and $B_{j-1}$  are disjoint and moreover $A = C_j \sqcup B_{j-1}$.
    If at some step $j$ we have $D^{\times} (C_j) \le |A| / M$ then we stop our algorithm putting
    $C=C_j$, $B = B_{j-1}$, and $k=j-1$.
    Consider the opposite situation, that is $D^{\times} (C_j) > |A| / M$.
    Put $C' = C_j$.
    By the definition there exists a number $\tau \ge 1$ and a finite set $G = G_j\subset \R$ such that the set
$$
    S_\tau = S_\tau (G_j,C') := \bigl\{ s\in C'/G ~:~ |C'\cap sG| \ge \tau \big\}
$$
    has size at least $\frac{|C'|^2 |G|^2}{M \tau^3}$.
%    Note also a trivial upper bound for the size of $S_\tau$, namely
%\begin{equation}\label{f:S_tau_upper1}
%    |S_\tau| \le \frac{|C'| |G|}{\tau} \,.
%\end{equation}
%    From (\ref{f:S_tau_upper1}) it follows that
%$$
%    \frac{|C'|^2 |G|^2}{M \tau^3} \le |S_\tau| \le \frac{|C'| |G|}{\tau}
%$$
%    and hence
%\begin{equation}\label{f:S_tau_upper2}
%    |G| \le M \tau^2 |C'|^{-1} \le |A|^3 |C'|^{-1} \le |A|^3 \,.
%\end{equation}
    We have
$$
    \tau |S_\tau| \le \sum_{s\in S_\tau} |C' \cap sG| = \sum_{a\in C'} |S_\tau \cap a G^{-1}| \,.
$$
   By the pigeonholing principle there is  a set $A'\subseteq C'$ and a number $q$ such that
$|A'| q \sim \tau |S_\tau|$ and $q< |S_\tau \cap a G^{-1}| \le 2 q$ for any $a\in A'$.
In other words, $A' \subseteq \Sym^\times_q (S_\tau,G)$.
    Applying Lemma \ref{l:d_r} with $P=S_\tau$, $Q=G$, we get
\begin{equation}\label{tmp:22.03.2016_1}
    D^{+} (A') \ll d^{+} (A') \le \frac{|S_\tau|^2 |G|^2}{q^3 |A'|} \,.
\end{equation}
    Further, we know that $|A'| q \sim \tau |S_\tau|$ and $|S_\tau| > \frac{|C'|^2 |G|^2}{M \tau^3}$.
    Combining these inequality with bound (\ref{tmp:22.03.2016_1}), we obtain
\begin{equation}\label{f:A'C'_est}
     D^{+} (A') \ll d^{+} (A') \lesssim \frac{|A'|^2 |G|^2}{\tau^3 |S_\tau|} < \frac{|A'|^2 M}{|C'|^2} \,.
\end{equation}
    Since $1\le d^{+} (A')$, we have $|A'| \gtrsim |C'| M^{-1/2}$.
    Trivially, $A' \subseteq C'$ and hence $D^{+} (A') \lesssim M$.

    After that we put $D_j = A'$, $C_{j+1} = C_j \setminus D_j$, $B_j = B_{j-1} \sqcup D_j$ and repeat the procedure.
    %The only we need to estimate is
    Clearly, at step $k$ one has $B_k = \bigsqcup_{j=1}^k D_j$ and because of $|D_j| \gtrsim |C_j| M^{-1/2}$, we have after some calculations that $k\lesssim M^{1/2}$, so $k$ is finite.
    It remains to estimate $D^{+} (B_k) = D^{+} (D_1 \cup \dots \cup D_k)$.
    Put $U_j = \{ i \in [k] ~:~ 2^{j-1} \le D^{+} (D_i) \le 2^j \}$, $j\in [t]$ and $k_j = |U_j|$.
    Since for any $i$ one has  $D^{+} (D_i) \lesssim M$ it follows that the number of sets $U_j$ is $t\lesssim 1$.
    Let also $B^{(j)}_k = \bigcup_{i\in U_j} D_i$.
    Applying Lemma \ref{l:subadd_D}, we get
    \begin{equation}\label{tmp:27.04.2016_1}
        D^{+} (B_k) \lesssim \frac{1}{|B_k|} \sum_{j=1}^t D^{+} (B^{(j)}_k) |B^{(j)}_k| \,.
    \end{equation}
    Now fixing $j\in [t]$, we see that $k_j \lesssim M^{1/2} 2^{-j/2}$.
    Using Lemma \ref{l:subadd_D} once more time, we obtain
    $$
        D^{+} (B^{(j)}_k) \lesssim  \frac{k^3_j}{|B^{(j)}_k|}\, \sum_{i\in U_j} D^{+} (D_i) |D_i|
            \lesssim
                M^{3/2} 2^{-j/2} \frac{1}{|B^{(j)}_k|}\, \sum_{i\in U_j} |D_i|
                    =
                         M^{3/2} 2^{-j/2} \,.
    $$
    Substituting the last bound into (\ref{tmp:27.04.2016_1}), we find
    $$
        D^{+} (B_k) \lesssim \frac{M^{3/2}}{|B_k|} \sum_{j=1}^t 2^{-j/2} |B^{(i)}_k|
            \ll
                M^{3/2} \,.
    $$
    Optimizing over $M$, that is solving the equation $M^{3/2} = |A|/M$ and choosing $M=|A|^{2/5}$, we obtain the result.
    This completes the proof.
$\hfill\Box$
\end{proof}

\bigskip

As for lower bounds in Theorems  \ref{t:R_E_3_introduction}, \ref{t:BW_D_introduction}, we use small modification of the construction from \cite{BW}.
A counterexample is so--called $(H+\Lambda)$--sets, see \cite{s_energy}.

\begin{theorem}
    For any positive integer $N$ there exists a set $A\subseteq \N$, $|A| =N$
    such that for an arbitrary $B\subseteq A$ with $|B| \ge |A|/2$ one has
    $\E^{+}_3 (B), \E^{\times}_3 (B) \gg |A|^{13/4}$
    and
    $D^{+} (B), D^{\times} (B) \gtrsim |A|^{1/4}$.
\label{t:construction_BW}
\end{theorem}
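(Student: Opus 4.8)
The plan is to exhibit an explicit construction based on a multiplicatively structured set with an additive perturbation — a so-called $(H+\Lambda)$-set — and to show that both the additive and multiplicative structure survive passing to any large subset. Concretely, I would take $H$ to be a geometric progression (or more generally a set of the form $\{2^{a} 3^{b} : 0 \le a,b < m\}$, so that $H$ has small multiplicative doubling while $H-H$ is large) of size roughly $N^{3/4}$, and let $\Lambda$ be a Sidon-type or short arithmetic-progression set of size roughly $N^{1/4}$ chosen so that the sums $h+\lambda$, $h\in H$, $\lambda\in\Lambda$, are all distinct; then $A = H+\Lambda$ has $|A| = |H||\Lambda| = N$. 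The point of the perturbation is that $A$ inherits good multiplicative energy from the fact that $A$ is covered by $|\Lambda|$ translates of $H$ (each with many multiplicative coincidences inside $H$), and simultaneously $A$ contains the full additive structure of $\Lambda$ replicated $|H|$ times. One must be slightly careful: a raw geometric progression is \emph{too} good multiplicatively, so the standard trick (as in \cite{s_energy}, \cite{BW}) is to take $H$ two-dimensional, $|H| \sim N^{3/4}$, with $\E^\times(H) \sim |H|^2 \log|H| \gtrsim |H|^2$ but with $\E^+(H)$ correspondingly controlled, and to balance the exponents so that both $\E^+_3$ and $\E^\times_3$ of $A$ land at $|A|^{13/4}$.

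The key steps, in order, would be: (i) fix the construction $A = H + \Lambda$ with $|H| = N^{3/4}$, $|\Lambda| = N^{1/4}$, arranging that the representation $a = h+\lambda$ is unique for each $a \in A$; (ii) show $\E^+_3(A) \gg |A|^{13/4}$ by restricting attention to additive triples that fix the $H$-coordinate — this reduces to $\E^+_3(\Lambda)$ scaled by $|H|^3$, so I need $\E^+_3(\Lambda) \gtrsim |\Lambda|^{4}$, i.e. $\Lambda$ should itself have large third energy (take $\Lambda$ an arithmetic progression); (iii) symmetrically, show $\E^\times_3(A) \gg |A|^{13/4}$ using that $A \subseteq \bigcup_{\lambda\in\Lambda}(H+\lambda)$ and each translate is multiplicatively "the same" set up to a shift — here the right estimate comes from the multiplicative structure of $H$ together with the replication across $\Lambda$; (iv) upgrade both bounds to hold for \emph{every} subset $B\subseteq A$ with $|B|\ge |A|/2$, which is where the construction is genuinely designed: because the structure is uniformly spread (every row $H+\lambda$ and, dually, every multiplicative coset meets $A$ in a large controlled set), a subset of density $1/2$ must still contain a positive proportion of the relevant triples by a simple counting/Cauchy--Schwarz argument; (v) deduce the bounds on $D^+(B), D^\times(B)$ from the lower half of the Corollary after Lemma \ref{l:D_q}, namely $|B|^2 (D^+(B))^2 \le \E^+_3(B)$ — wait, that gives an \emph{upper} bound on $D^+$; the correct direction is $\E^+_3(B) \lesssim D^+(B)|B|^3$, so $D^+(B) \gtrsim \E^+_3(B)/(|B|^3 \log|B|) \gtrsim |A|^{13/4}/|A|^3 = |A|^{1/4}$ up to logs, and likewise for $D^\times$.

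The main obstacle I anticipate is step (iv): proving that the energy lower bounds are \emph{robust under passing to an arbitrary half-sized subset}. For the additive side this is cleanest — a subset of density $1/2$ in $A = H+\Lambda$, by pigeonholing over the $\Lambda$-fibers, contains $\gtrsim |H|$ elements in a positive fraction of fibers, and then one runs the $\E^+_3$ lower bound fiberwise; but one needs the fibers where $B$ is dense to \emph{align additively}, which requires choosing $\Lambda$ so that density-$1/2$ subsets of $\Lambda$ still have third energy $\gtrsim |\Lambda|^4$ (true for an AP, by the analogous fiber argument on $\Lambda$ itself, or just by Cauchy--Schwarz since the sumset stays small). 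The multiplicative side is harder because the multiplicative fibration of $A$ is not the obvious one — it comes from the multiplicative structure of $H$, and a half-subset of $A$ need not be a half-subset of any multiplicative coset in a clean way. The resolution, following \cite{s_energy}, is that $(H+\Lambda)$-sets are precisely engineered so that the multiplicative energy is carried by $\Omega(|A|)$ disjoint "blocks" each of a fixed size, so that removing half of $A$ destroys at most half the blocks and hence only a constant factor of the energy; making this counting precise, and checking the exponent bookkeeping gives exactly $13/4$ and $1/4$, is the technical heart of the argument.
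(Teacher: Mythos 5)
Your high-level plan — build an explicit set whose additive and multiplicative $\E_3$-energies are both forced to be $\gg |A|^{13/4}$ on every density-$1/2$ subset, then read off the $D^{+}, D^{\times}$ lower bounds from $q \ge \E_3/|B|^3$ and Lemma~\ref{l:D_q} — is exactly the shape of the paper's argument, and your final step (v), after the mid-sentence self-correction, is the right deduction. However, the construction you propose is not the one the paper uses, and the multiplicative half of your argument does not work.

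You take $A = H + \Lambda$, an \emph{additive} combination of a multiplicatively-structured set $H$ (a geometric progression, $|H|\sim N^{3/4}$) and an additively-structured set $\Lambda$ ($|\Lambda|\sim N^{1/4}$), and your step (iii) rests on the claim that ``each translate $H+\lambda$ is multiplicatively the same set up to a shift,'' so that the multiplicative structure of $H$ is replicated across $\Lambda$. This is false: multiplicative energy is not translation-invariant, and for $\lambda\neq 0$ a shifted geometric progression $H+\lambda$ has essentially trivial $\E^{\times}$. The only genuine multiplicative structure in $H+\Lambda$ sits in the single fiber $\lambda=0$, which contributes at most $\E^\times_3(H)\sim |H|^4 = N^3 \ll N^{13/4}$, and which an adversarially chosen subset $B$ can simply discard. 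So step (iii) has no valid starting point, and step (iv) on the multiplicative side (``blocks each of a fixed size'') is built on that false premise.

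The paper instead takes $A = P\cdot G$ with $G=\{2^i\}_{i=1}^{K}$ a geometric progression of size $K\sim N^{1/4}$ and $P$ the first $t\sim N^{3/4}$ odd primes — so the sizes are the opposite of yours, and crucially the two pieces are combined \emph{multiplicatively}, not additively. Each row $P\cdot 2^j$ is then a \emph{dilate} of $P$, not a translate, and dilation preserves additive structure: $|B_j+B_j|\le|P+P|\lesssim|P|$ gives $\E^{+}_3(B_j)\gtrsim |B_j|^6/|P|^2$ for every $B_j\subseteq P\cdot 2^j$, and summing over rows with Cauchy--Schwarz yields $\E^{+}_3(B)\gg N^4/K^3$ for any $B$ of density $1/2$. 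The multiplicative lower bound is obtained by a completely separate mechanism, namely the Cauchy--Schwarz inequality (\ref{f:E3_CS}) in its multiplicative form together with Lemma~\ref{l:D(A)}: one shows $|A^2\cdot\D(A)|\ll N^3/K$ by a direct fiber computation (for $x\notin G/G$ the fiber $A\cap xA$ is tiny), and then $\E^\times_3(B)\ge |B|^6/|A^2\cdot\D(A)|\gg N^3K$. Balancing $N^4/K^3 = N^3 K$ gives $K\sim N^{1/4}$ and both energies $\gg N^{13/4}$. Note that this argument handles every subset $B$ automatically: the additive bound is row-wise and survives any density-$1/2$ subset by convexity, while the multiplicative bound depends only on $|A^2\cdot\D(A)|$ and $|B|$, with no alignment issues. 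Your step (iv), which you correctly flag as the technical heart, is precisely the thing the paper's choice of construction renders trivial; with your additive $H+\Lambda$ and your sizes, even the additive row-wise count gives only $\gg N^{7/4}$, so you would be forced into the non-robust ``fix the $H$-coordinates'' argument that you cannot push to subsets.
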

\begin{proof}
%    Let $N$ be an integer.
    Take an integer parameter $1\le K \ll N$, which we will choose later,
    $t=\lceil N/K \rceil$ and put $G = \{ 2^i\}_{i=1}^K$, $P = \{3=p_1 < p_2 < \dots < p_t\}$ be $t$ consecutive odd primes.
    Finally, put $A=PG$, $|A| = tK = N + \theta K$, where $|\theta| \le 1$.
    Thus,
    %with some abuse of the notation
    redefining $N$ if needed
    one can think that $|A| =N$.

    Consider any $B\subseteq A$ such that $|B| \ge |A|/2$.
    For any $j\in [K]$ put $B_j = B \cap (P \cdot 2^j)$.
%    We have
%    $$
%        \sum_{j=1}^K |B_j| = |B| \ge |A|/2
%    $$
%    and hence
%    $$
%        \sum_{j\in \Omega} |B_j| \ge |A|/4 \,,
%    $$
%    where $
    Clearly, by the theorem on the density of the primes and estimate (\ref{f:E_CS'}) or (\ref{f:E3_CS}), we get
    $$
        \E_{3}^+ (B_j) \ge \frac{|B_j|^6}{|P+P|^2} \gtrsim \frac{|B_j|^6}{|P|^2} \,.
    $$
    Thus using the Cauchy--Schwarz inequality, we obtain
\begin{equation}\label{tmp:20.04.2016_2}
    \E^{+}_3 (B) \ge \sum_{j=1}^K \E^{+}_3 (B_j) \gtrsim |P|^{-2} \sum_{j=1}^K |B_j|^6
        \ge
            \frac{|B|^6}{|P|^2 K^5} \gg \frac{N^4}{K^3} \,.
\end{equation}

    Now let us
    %estimate
    calculate
    $\E_3^{\times} (B)$.
    By formula (\ref{f:E3_CS}), we have
\begin{equation}\label{tmp:20.04.2016_-1}
    |A|^6 \ll |B|^6 \le \E^\times_3 (B) |B^2 \cdot \D(B)| \le \E^\times_3 (B) |A^2 \cdot \D(A)|
\end{equation}
    and thus it is sufficient to estimate the size of the set $A^2 \cdot \D(A)$.
    Put $A_x= A\cap xA$.
    %By
    Applying
    formula (\ref{f:D(A)}) of Lemma \ref{l:D(A)} in its multiplicative form, we obtain
$$
    |A^2 \cdot \D(A)| = \sum_{x\in A/A} |AA_x| = \sum_{x\in G/G} |A A_x| + \sum_{x\in (A/A) \setminus (G/G)} |A A_x|
        \le
$$
\begin{equation}\label{tmp:20.04.2016_1}
        \le
            |G/G| |GGPP| + \sum_{x\in (A/A) \setminus (G/G)} |A A_x|
                \ll
                    N^2 + \sigma \,.
\end{equation}
Let us prove that $\sigma \ll N^3 / K$.
Put $x\in (A/A) \setminus (G/G)$.
In other words $x=g_1/g_2 \cdot p_1/p_2$ and $p_1\neq p_2$.
Now taking $a\in A_x$, we have
$$
    a= p' g' = g_1 p_1 p'' g'' / g_2 p_2
$$
or
$$
    p' p_2 \cdot g' g_2 = p'' p_1 \cdot g'' g_1 \,,
$$
where $p',p'' \in P$, $g',g'' \in G$.
Thus $p' = p_1$, $p''= p_2$ and $g' g_2 = g'' g_1$.
Hence $A_x = p_1 g_1 g^{-1}_2 \cdot G$ and $|A A_x| \le |GG P| \le 2 N$.
It follows that
$$
    \sigma \le 2|A/A|N \ll  N |G/G \cdot P/P| \ll N^3 / K \,.
$$
Returning to (\ref{tmp:20.04.2016_-1}), (\ref{tmp:20.04.2016_1}) and recalling that $K\ll N$, we get
$$
    N^6 \ll \E_3^{\times} (B) \cdot \frac{N^3}{K} \,.
$$
In view of (\ref{tmp:20.04.2016_2}) the optimal choice of $K$ is $K\sim N^{1/4}$.

Finally, inequalities  $D^{+} (B), D^{\times} (B) \gtrsim |A|^{1/4}$ immediately
follows from the obtained lower bounds for $\E^{+}_3 (B), \E^{\times}_3 (B)$, formula (\ref{f:q_lower}) and Lemma \ref{l:D_q}.
This completes the proof.
%as required.
$\hfill\Box$
\end{proof}

\bigskip

Combining Theorem \ref{t:construction_BW} with Theorem \ref{t:BW_D}, we obtain Theorem \ref{t:BW_D_introduction} as well as
%the second part
Theorem \ref{t:R_E_3_introduction}.
%of Theorem \ref{t:R_E_3_introduction} from the introduction.
It is easy to see that estimate (\ref{f:BW_1}) implies bound (\ref{f:BW_2}) of Theorem \ref{t:BW} via the Cauchy--Schwarz inequality and hence we lose $\d/2$. Our method allows to avoid such loses.

\section{Another proof of Theorem \ref{t:R_E_3_introduction} and further remarks}
\label{sec:proof1}

As was shown in \cite{KS2} that quantities $d^{+} (A)$, $d^\times (A)$ are bounded above by
$$
    d^{+}_* (A) := \min_{B \neq \emptyset} \frac{|AB|^2}{|A||B|}
        \quad \mbox{ and } \quad
    d^{\times}_* (A) := \min_{B \neq \emptyset} \frac{|A+B|^2}{|A||B|} \,,
$$
correspondingly, see also \cite{RR-NS}.
It turns out that there is a sum--product-type result involving just the quantities  $d^{+}_* (A)$, $d^{\times}_* (A)$ but not sum sets or product sets (which are hidden in the definitions of $d^{+}_* (A)$, $d^{\times}_* (A)$, nevertheless).

\begin{theorem}
    Let $A\subset \R$ be a finite set.
    Then
\begin{equation}\label{f:dd}
    |A| \lesssim  d^{+}_* (A) \cdot d^{\times}_* (A) \,.
\end{equation}
\label{t:dd}
\end{theorem}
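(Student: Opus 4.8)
The plan is to carry out Elekes' classical sum--product argument in a \emph{relative} form, letting the auxiliary sets appearing in the definitions of $d^{+}_*(A)$ and $d^{\times}_*(A)$ play the role usually played by $A$ itself. Fix a nonempty $B\subset \R$ with $d^{\times}_*(A) = |A+B|^2/(|A|\,|B|)$ and a nonempty $C\subset \R$ with $d^{+}_*(A) = |AC|^2/(|A|\,|C|)$; we may assume $0\notin C$ (restricting the minima in the definitions to sets avoiding $0$ costs only an absolute constant and is in any case the reading consistent with $d^{\pm}_*(A)\ge 1$). Consider the point set $P := (A+B)\times(AC)\subset \R^2$, so that $|P|\le |A+B|\,|AC|$, together with the family of lines $L := \{\,\ell_{b,c} : b\in B,\ c\in C\,\}$, where $\ell_{b,c} := \{\,(x,y) : y = c(x-b)\,\}$. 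Since $0\notin C$, the slope $c$ and the intercept $-cb$ of $\ell_{b,c}$ recover the pair $(b,c)$, so the $|B|\,|C|$ lines are pairwise distinct. For every fixed $(b,c)\in B\times C$ and every $a\in A$ the point $(a+b,\ ca)$ lies on $\ell_{b,c}$ and belongs to $P$, and distinct values of $a$ give distinct points; hence the incidence count satisfies $\mathcal{I}(P,L)\ge |A|\,|B|\,|C|$.

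Now I would invoke the Szemer\'{e}di--Trotter incidence theorem \cite{TV}, namely $\mathcal{I}(P,L)\ll |P|^{2/3}|L|^{2/3} + |P| + |L|$, to obtain
$$ |A|\,|B|\,|C| \ll (|A+B|\,|AC|)^{2/3}\,(|B|\,|C|)^{2/3} + |A+B|\,|AC| + |B|\,|C| \,. $$
The term $|B|\,|C|$ cannot be the dominant one, since that would force $|A|\ll 1$; hence one of the first two terms dominates. If the first dominates, then raising to the third power gives $|A|^3\,|B|\,|C| \ll (|A+B|\,|AC|)^2$ at once. If instead the second dominates, then $|A|\,|B|\,|C| \ll |A+B|\,|AC|$, and multiplying this inequality by the trivial bound $|A|^2 \le |A+B|\,|AC|$ (valid because $B\neq\emptyset$ and every element of $C$ is nonzero) again yields $|A|^3\,|B|\,|C| \ll (|A+B|\,|AC|)^2$. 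In either case,
$$ d^{+}_*(A)\, d^{\times}_*(A) = \frac{|AC|^2}{|A|\,|C|}\cdot\frac{|A+B|^2}{|A|\,|B|} = \frac{(|A+B|\,|AC|)^2}{|A|^2\,|B|\,|C|} \gg |A| \,, $$
which proves (\ref{f:dd}), in fact with $\gg$ in place of $\lesssim$, i.e.\ without logarithmic losses.

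The choice of configuration is the only creative step; the rest is routine, but two points deserve attention. First, one must handle the degenerate possibility $0\in C$ (equivalently $0\in A$), which collapses many of the lines $\ell_{b,c}$ onto the $x$-axis and would invalidate the distinctness count; I would dispose of this by the normalisation indicated above. Second, and more substantively, when the $|P|$-term dominates in Szemer\'{e}di--Trotter the incidence inequality by itself is too weak, and one must feed in the elementary bounds $|A+B|\ge |A|$ and $|AC|\ge |A|$ to recover the missing factor of $|A|$; this is the step I would expect to be overlooked.
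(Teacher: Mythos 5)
Your proof is correct, and it takes a genuinely different route from the paper. The paper deduces \eqref{f:dd} from the chain of inequalities
\begin{equation*}
\frac{|A|^2|B|}{|A+B|^2}\ \le\ \frac{\E_3^{+}(A,-B)}{|A||B|^2}\ \le\ q^{+}(A)\ \lesssim\ D^{+}(A)\ \ll\ d^{+}(A)\ \le\ d^{+}_*(A)\,,
\end{equation*}
valid for every nonempty finite $B$, and then minimises the resulting bound $|A|\lesssim d^{+}_*(A)\cdot\frac{|A+B|^2}{|A||B|}$ over $B$ to produce $d^{\times}_*(A)$. This leans on Lemmas~\ref{l:D_q} and \ref{l:d_r} and on the inequality $d^{+}(A)\le d^{+}_*(A)$, and the logarithmic loss is inherited from the $q^{+}\lesssim D^{+}$ comparison. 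You instead run a self-contained Elekes-type incidence argument: the configuration $P=(A+B)\times(AC)$, lines $y=c(x-b)$, the lower bound $\mathcal{I}(P,L)\ge|A||B||C|$, and Szemer\'edi--Trotter. All the details check out: line distinctness needs $0\notin C$, which you handle; the degenerate $|L|$-term case is trivially fine since $d^{\pm}_*\ge1$ and $|A|\ge2$; and the $|P|$-term case is rescued by the elementary $|A|^2\le|A+B|\,|AC|$ exactly as you note. Your version even sharpens $\lesssim$ to $\gg$ (no logarithmic loss), though it does not exhibit the intermediate inequalities $|A|\lesssim D^{+}(A)\,d^{\times}_*(A)$ and $|A|\lesssim D^{\times}(A)\,d^{+}_*(A)$ that the paper records after the theorem, since you bypass $D^{\pm}$ altogether. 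The two arguments ultimately both rest on Szemer\'edi--Trotter (the paper hides it inside Lemma~\ref{l:d_r}), but yours does so directly and more economically, while the paper's route illustrates how the quantities $q^{\pm}$, $D^{\pm}$, $d^{\pm}$, $d^{\pm}_*$ interlock.
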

\begin{proof}
    Applying Lemma \ref{l:d_r}, Lemma  \ref{l:D_q} and the H\"{o}lder inequality,
    we obtain for any nonempty finite set $B\subset \R$ that
$$
    \frac{|A|^2 |B|}{|A+B|^2}
        \le
            \frac{\E_3^{+}(A,-B)}{|A||B|^2} \le q^{+} (A) \lesssim D^{+} (A) \ll d^{+} (A) \le d^{+}_* (A) \,.
$$
    In other words, for any such $B$ one has
$$
    |A| \lesssim d^{+}_* (A) \cdot \frac{|A+B|^2}{|A||B|}
$$
    as required.
%This completes the proof.
$\hfill\Box$
\end{proof}

\bigskip

Of course bound (\ref{f:dd}) is optimal up to logarithms. Actually, we have proved in Theorem \ref{t:dd} that
$|A| \lesssim D^{+} (A) \cdot d^{\times}_* (A)$ and $|A| \lesssim D^{\times} (A) \cdot d^{+}_* (A)$.

\bigskip

%We finish the section noting an another
%The result above gives us an
Theorem \ref{t:BW_D}, combining with Lemmas \ref{l:gen_sigma}, \ref{l:gen_sigma_m}, gives us an
analog of Theorem \ref{t:BW'}
%for $\E^+_3, \E^\times$ energies
(or one can repeat the arguments from \cite{KS2} directly, we left this for the interested reader).
%%Of course, a similar analog of Theorem \ref{t:BW_Fp} below also takes place.

\begin{corollary}
    Let $A\subset \R$ be a finite set and $\d = 1/5$.
    Then there are two disjoint subsets $B$ and $C$ of $A$ such that $A = B\sqcup C$ and
$$
    \max\{ \E^{+} (A,B), \E^{\times} (A,C)\} \lesssim  |A|^{3-\d} \,.
$$
\label{c:BW''}
\end{corollary}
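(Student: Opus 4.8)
The plan is to feed the conclusion of Theorem \ref{t:BW_D} directly into the energy estimates of Lemmas \ref{l:gen_sigma} and \ref{l:gen_sigma_m}. First I would apply Theorem \ref{t:BW_D} (with its $\d = 2/5$) to produce a partition $A = B \sqcup C$ of $A$ into two disjoint subsets satisfying $\max\{ D^{+}(B), D^{\times}(C) \} \lesssim |A|^{1-2/5} = |A|^{3/5}$. The sets $B,C$ here will be exactly the $B,C$ of the Corollary.

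Next I would invoke the symmetry of the additive energy, $\E^{+}(A,B) = \E^{+}(B,A)$, so that Lemma \ref{l:gen_sigma} applies with first argument $B$: namely
$$\E^{+}(A,B) = \E^{+}(B,A) \ll (D^{+}(B))^{1/2} |B| |A|^{3/2} \le (D^{+}(B))^{1/2} |A|^{5/2},$$
where I used the trivial bound $|B| \le |A|$. Substituting $D^{+}(B) \lesssim |A|^{3/5}$ gives $\E^{+}(A,B) \lesssim |A|^{3/10} \cdot |A|^{5/2} = |A|^{14/5} = |A|^{3 - 1/5}$.

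The multiplicative estimate is entirely parallel: using that $\E^{\times}$ is symmetric and Lemma \ref{l:gen_sigma_m} with first argument $C$,
$$\E^{\times}(A,C) = \E^{\times}(C,A) \ll (D^{\times}(C))^{1/2} |C| |A|^{3/2} \le (D^{\times}(C))^{1/2} |A|^{5/2} \lesssim |A|^{3/10} |A|^{5/2} = |A|^{3-1/5}.$$
Taking the maximum of the two displays yields the claim with $\d = 1/5$.

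There is essentially no genuine difficulty here; the one point that needs a moment's attention is the bookkeeping of which set sits in which slot. Lemmas \ref{l:gen_sigma}, \ref{l:gen_sigma_m} bound the energy in terms of the quantity $D$ of the \emph{first} argument only, so one must use symmetry of $\E^{+}$ and $\E^{\times}$ to place $B$, respectively $C$, into that position, and nothing is lost because $|B|, |C| \le |A|$. As the authors note, one could instead rerun the decomposition argument of \cite{KS2} verbatim without mentioning $D^{+}$, $D^{\times}$, but passing through Theorem \ref{t:BW_D} is the shortest route.
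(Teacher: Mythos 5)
Your proof is correct and follows exactly the route the paper indicates (the paper gives no explicit proof but states that Corollary \ref{c:BW''} follows from Theorem \ref{t:BW_D} together with Lemmas \ref{l:gen_sigma}, \ref{l:gen_sigma_m}). The bookkeeping point you flag — using the symmetry $\E^{+}(A,B)=\E^{+}(B,A)$, $\E^{\times}(A,C)=\E^{\times}(C,A)$ so that the set whose $D$-quantity is controlled occupies the first slot of the lemma — is exactly the right observation, and the arithmetic $|A|^{3/10}\cdot|A|^{5/2}=|A|^{3-1/5}$ checks out.
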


Of course Theorem \ref{t:BW_D} and Lemmas \ref{l:gen_sigma}, \ref{l:gen_sigma_m}
allows to calculate the higher energies of the splitting sets $B,C$.
We give a sketch of a more direct proof in the case of $\E^+_3, \E^\times_3$ energies, using Proposition \ref{p:E_k-norm}.
%, see Theorem \ref{t:BW_Fp} gives us the following

\begin{theorem}
    Let $A\subset \R$ be a finite set and $\d_1 = 2/5$.
    Then there exists
    %In particular, there are
    two disjoint subsets $B$ and $C$ of $A$ such that $A = B\sqcup C$ and
$$
    \max\{\E^{+}_3 (B), \E^{\times}_3 (C)\} \lesssim  |A|^{4-\d_1} \,,
$$
\label{t:R_E_3}
\end{theorem}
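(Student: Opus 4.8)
The plan is to mimic the algorithmic proof of Theorem~\ref{t:BW_D} but to build the decomposition so that, instead of controlling $D^{+}(B_k)$ directly, we control the $E_3$--norms of the pieces and then invoke the triangle inequality for $\|\cdot\|_{E_3}$ supplied by Proposition~\ref{p:E_k-norm}. Recall that $\E^{+}_3 (B) = \| B \|_{E_3}^{6}$ where $B$ denotes the characteristic function of the set, and that $\|\cdot\|_{E_3}$ is a genuine norm on real functions since $3$ is odd --- wait, we need an \emph{even} index for the norm property, so instead we work with the (quasi-)triangle inequality $\| f+g \|_{E_3} \le \| |f| \|_{E_3} + \| |g| \|_{E_3}$ which holds for all $k\ge 2$ by the first part of Proposition~\ref{p:E_k-norm}; since our functions are nonnegative characteristic functions this is exactly what we need. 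Thus if $C = D_1 \sqcup \dots \sqcup D_k$ then $(\E^{\times}_3 (C))^{1/6} = \| D_1 + \dots + D_k \|_{E^{\times}_3} \le \sum_{i=1}^k \| D_i \|_{E^{\times}_3} = \sum_{i=1}^k (\E^{\times}_3 (D_i))^{1/6}$, and similarly on the additive side; this is the structural fact that replaces Lemma~\ref{l:subadd_D} and lets us add up the contributions of the pieces with essentially no loss.

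The algorithm itself runs as in Theorem~\ref{t:BW_D}: fix a parameter $1\le M \le |A|$, set $C_1 = A$, $B_0 = \emptyset$, and at each step, as long as $\E^{\times}_3 (C_j) > |A|^3 \cdot (|A|/M) $ (say), extract a subset $D_j = A' \subseteq C_j$ using the same Lemma~\ref{l:d_r}/$\Sym$ argument. Concretely, if $D^{\times}(C_j)$ is large --- which by the Corollary after Lemma~\ref{l:D_q} is equivalent to $\E^{\times}_3 (C_j)$ being large up to logs --- then there are $\tau$ and $G$ with $|S_\tau(G_j,C_j)| \gtrsim |C_j|^2|G|^2/(M\tau^3)$, and the pigeonholing in the proof of Theorem~\ref{t:BW_D} produces $A' \subseteq C_j$ with $|A'| \gtrsim |C_j| M^{-1/2}$ and $D^{+}(A') \ll d^{+}(A') \lesssim |A'|^2 M / |C_j|^2 \lesssim M$. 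Then by Lemma~\ref{l:gen_sigma} we get $\E^{+}_3 (D_j) = \E^{+}_3 (A', A') \lesssim D^{+}(A') |A'|^3 \lesssim M |A'|^3 \le M |A|^3$, so $(\E^{+}_3 (D_j))^{1/6} \lesssim M^{1/6} |A|^{1/2}$. Since $|D_j| \gtrsim |C_j| M^{-1/2}$ the number of steps is $k \lesssim M^{1/2}$, and summing the sixth roots over the at most $k$ pieces gives $(\E^{+}_3 (B))^{1/6} \lesssim k \cdot M^{1/6} |A|^{1/2} \lesssim M^{1/2} \cdot M^{1/6} |A|^{1/2} = M^{2/3} |A|^{1/2}$, i.e. $\E^{+}_3 (B) \lesssim M^{4} |A|^{3}$. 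On the multiplicative side, the stopping rule guarantees $\E^{\times}_3 (C) \lesssim |A|^{4}/M$.

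Optimizing, we balance $M^4 |A|^3$ against $|A|^4/M$: the equation $M^4 |A|^3 = |A|^4 / M$ gives $M^5 = |A|$, so $M = |A|^{1/5}$, yielding $\max\{\E^{+}_3 (B), \E^{\times}_3(C)\} \lesssim |A|^{4 - 1/5} \cdot$ --- hmm, $|A|^4/M = |A|^{4-1/5} = |A|^{19/5}$, which only gives $\d_1 = 1/5$, not $2/5$. The discrepancy signals that one must be more careful: the right bookkeeping dyadically decomposes the $D_i$ according to the size of $\E^{\times}_3(D_i)$ (or of $D^{+}(D_i)$) exactly as in the end of the proof of Theorem~\ref{t:BW_D}, using that within a dyadic class $U_\ell$ with $D^{+}(D_i) \sim 2^\ell$ one has $|U_\ell| \lesssim M^{1/2} 2^{-\ell/2}$, so that the sixth-root sum over that class contributes $\lesssim M^{1/2} 2^{-\ell/2} \cdot (2^\ell |A|^3)^{1/6} = M^{1/2} |A|^{1/2} 2^{-\ell/2 + \ell/6} = M^{1/2}|A|^{1/2} 2^{-\ell/3}$, which is summable in $\ell$ and gives total $\lesssim M^{1/2} |A|^{1/2}$, hence $\E^{+}_3 (B) \lesssim M^3 |A|^3$. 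Now balancing $M^3 |A|^3$ with $|A|^4/M$ gives $M^4 = |A|$, i.e. $M = |A|^{1/4}$ --- still not quite $2/5$; the truly sharp bookkeeping should instead use $\E^{+}_3 (D_i) \lesssim D^{+}(D_i)|D_i|^3$ with $|D_i| \ll |C_i| \le |A|$ and the relation $k_\ell |D_i| \lesssim |A|$ more efficiently, which is precisely the step in the proof of Theorem~\ref{t:BW_D} that produces $D^{+}(B_k) \lesssim M^{3/2}$ and then the choice $M = |A|^{2/5}$; feeding that $B$ into $\E^{+}_3(B) \lesssim D^{+}(B)|B|^3 \lesssim M^{3/2}|A|^3$ and balancing against $|A|^3 \cdot |A|/M$ gives $M^{3/2} = |A|/M$, $M^{5/2} = |A|$, $M = |A|^{2/5}$, and the final bound $|A|^{4 - 2/5}$. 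The main obstacle, therefore, is not the algorithm --- which is a verbatim rerun of Theorem~\ref{t:BW_D} --- but getting the dyadic accounting of the pieces exactly right so that the exponent comes out $\d_1 = 2/5$; the cleanest route is simply to apply Theorem~\ref{t:BW_D} as a black box to produce $B,C$ with $D^{+}(B), D^{\times}(C) \lesssim |A|^{3/5}$ and then convert via Lemmas~\ref{l:gen_sigma} and \ref{l:gen_sigma_m}, reserving the norm-based Proposition~\ref{p:E_k-norm} for a genuinely self-contained alternative derivation that avoids passing through $D^{+}, D^{\times}$ at all.
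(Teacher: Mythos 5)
Your proposal is, in its final form, correct: applying Theorem~\ref{t:BW_D} as a black box to obtain $B,C$ with $D^{+}(B), D^{\times}(C) \lesssim |A|^{3/5}$ and then converting via $\E^{+}_3(B) \lesssim D^{+}(B)|B|^3$ and $\E^{\times}_3(C) \lesssim D^{\times}(C)|C|^3$ (Lemmas~\ref{l:gen_sigma}, \ref{l:gen_sigma_m}) does give $\max\{\E^{+}_3(B), \E^{\times}_3(C)\} \lesssim |A|^{4-2/5}$. This is in fact the route the paper itself acknowledges in the sentence immediately preceding Theorem~\ref{t:R_E_3} (``Of course Theorem~\ref{t:BW_D} and Lemmas~\ref{l:gen_sigma}, \ref{l:gen_sigma_m} allows to calculate the higher energies of the splitting sets''). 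So your final answer is valid, but it does not match the sketch the paper offers for Theorem~\ref{t:R_E_3}, which is meant to be a \emph{more direct} argument using Proposition~\ref{p:E_k-norm}.

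What you repeatedly fail to locate in your attempts at the direct argument is the sharper per-piece bound that the paper's sketch uses. You work with $\E^{+}_3(D_j) \lesssim D^{+}(D_j)|D_j|^3 \lesssim M|D_j|^3$ (or even the cruder $M|A|^3$), and correctly observe that, fed through the triangle inequality for $\|\cdot\|_{E_3}$ and the constraint $k\lesssim M^{1/2}$, these only give exponents of the form $1/5$ or $2/7$. The paper's sketch instead extracts, from the argument of Theorem~20 of \cite{KS2}, a set $A_1 \subseteq A$ with $|A_1| \gtrsim (\E^{\times}_3(A))^{1/2}|A|^{-1}$ and the inequality $|A_1|^5 |A|^2 \gtrsim \E^{\times}_3(A)\,\E^{+}_3(A_1)$. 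When $\E^{\times}_3(C_j) > |A|^4/M$ this yields $\E^{+}_3(D_j) \lesssim M|A|^{-2}|D_j|^5$ together with $|D_j| \gtrsim |A|M^{-1/2}$. The crucial gain over your bound is the factor $|D_j|^5/|A|^2$ in place of $|D_j|^3$: when the pieces $D_j$ are substantially smaller than $A$ (which happens when $M$ is large), this is strictly better, and it is exactly what makes the H\"older step
$$
\sum_j (\E^{+}_3(D_j))^{1/6} \lesssim (M|A|^{-2})^{1/6}\sum_j |D_j|^{5/6} \le (M|A|^{-2})^{1/6}|A|^{5/6}k^{1/6} \lesssim M^{1/4}|A|^{1/2}
$$
close with $\E^{+}_3(B_k) \lesssim M^{3/2}|A|^3$, and then the optimization $M^{3/2}|A|^3 = |A|^4/M$ lands on $M=|A|^{2/5}$ and $\delta_1=2/5$. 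So the obstacle you identified (``the main obstacle \ldots\ is getting the dyadic accounting of the pieces exactly right'') is real, but the fix is not better dyadic bookkeeping with the bound $D^{+}(D_j)\lesssim M$; it is replacing that bound by the size-sensitive one $\E^{+}_3(D_j) \lesssim M|A|^{-2}|D_j|^5$ from \cite{KS2}, which is what actually powers the direct proof.
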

\begin{proof}{\bf (sketch)}
Using the arguments of the proof of Theorem 20 from \cite{KS2} one finds a set $A_1\subseteq A$ such that
%$|A_1| \gtrsim \E^\times (A) |A|^{-2}$,
$|A_1| \gtrsim (\E^\times_3 (A))^{1/2} |A|^{-1}$ and
$$
    |A_1|^5 |A|^2 \gtrsim \E^\times_3 (A) \E^+_3 (A_1)
$$
(and, similarly, a dual version).
%In particular
After that applying  the notation and the algorithm of the proof of Corollary 21 of the paper
 or following the proof of Theorem \ref{t:BW_D} as well as Proposition \ref{p:E_k-norm},
 %we get
 we obtain
$$
    \E^{+}_3 (D_j) \lesssim M |A|^{-2} |D_j|^{5} \,,
    %\,.
$$
where
\begin{equation}\label{f:D_j_low}
    |D_j| \gtrsim |A| M^{-1/2} \,,
\end{equation}
and
%by
with help of
the H\"{o}lder inequality
$$
    (\E^{+}_3 (B_k))^{1/6} = (\E^{+}_3 (D_1 \cup \dots \cup D_k))^{1/6} \le \sum_{j=1}^k (\E^{+}_3 (D_j))^{1/6}
        \lesssim (M |A|^{-2})^{1/6} \sum_{j=1}^k |D_j|^{5/6}
        \le
$$
$$
        \le
            (M |A|^{-2})^{1/6} |A|^{5/6} k^{1/6}
                        \lesssim
                            M^{1/4} |A|^{1/2} \,.
$$
The last bound is a consequence of (\ref{f:D_j_low}), namely, $k \lesssim M^{1/2}$.
Hence
$$
    \E^{+}_3 (B_k) \lesssim M^{3/2} |A|^{3} \,.
$$
Optimizing over $M$, that is solving the equation $M^{3/2} |A|^3 = |A|^4/M$ and choosing  $M=|A|^{2/5}$, we obtain
the result.
%This completes the proof.
$\hfill\Box$
\end{proof}

\bigskip

We do not consider the situation of higher energies (although Proposition \ref{p:E_k-norm} allows to do it) because they will not so effective. The fact is the Szemer\'{e}di--Trotter theorem naturally corresponds to $\E^{+}_3 (A)$, $\E^\times_3 (A)$ energies.

\bigskip

In \cite{Shkredov_R[A]} the author considered  a
%little bit
more general context than usual sum--product setting.
The method, combining with the arguments of \cite{KS2}, allows to obtain a variant of Theorem \ref{t:BW'}, in particular.

\begin{theorem}
    Let $A\subset \R$ be a finite set, $\a \neq 0$ be a real number, and $\d = 1/5$.
    Then there are two disjoint subsets $B$ and $C$ of $A$ such that $A = B\sqcup C$ and
$$
    \max\{ \E^{\times} (B), \E^{\times} (C+\a)\} \lesssim  |A|^{3-\d} \,.
$$
    Further, there are disjoint subsets $B'$ and $C'$ of $A$ such that $A = B'\sqcup C'$ and
$$
    \max\{ \E^{+} (B'), \E^{+} (\a/ C')\} \lesssim  |A|^{3-\d} \,.
$$
\label{t:BW_mult}
\end{theorem}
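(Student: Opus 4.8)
The plan is to reduce the statement to the already-proved decomposition machinery by replacing the additive (resp. multiplicative) structure of one half by the structure of a shifted or inverted copy. The key observation is that the proof of Theorem \ref{t:BW_D} only used two ingredients: the fact that $D^{+}$, $D^\times$ govern the energies $\E^{+}_3$, $\E^\times_3$ via Lemmas \ref{l:gen_sigma}, \ref{l:gen_sigma_m} (equivalently, via $q^{+}$, $q^\times$ and Lemma \ref{l:D_q}), and the subadditivity of these quantities (Lemma \ref{l:subadd_D}). Thus, to run the same algorithm for the pair $(\E^\times, \E^\times(\cdot + \a))$ one needs a Szemer\'edi--Trotter-type control of the quantity $\E^\times_3(C + \a, B)$ in the same shape $q(A)|A||B|^2$, uniformly over $B$; this is exactly what the "more general context" of \cite{Shkredov_R[A]} provides, since there one studies sets of the form $\{(a+\a_i)/(a+\a_j)\}$ and the incidence bounds apply to the shifted set $A+\a$ as well as to $A$ itself.

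First I would record, following \cite{Shkredov_R[A]} and \cite{KS2}, the shifted analog of Definition \ref{def:SzT-type}: a set $A$ is of \emph{$\a$-multiplicative SzT-type} with parameter $D$ if the count of $s$ with $|(A+\a)\cap s(B+\a)|\ge \tau$ is $\le D|A||B|^2/\tau^3$; call the least such $D$ (up to logarithms) $q^\times_\a(A)$, and verify that Lemmas \ref{l:gen_sigma_m}, \ref{l:D_q}, and \ref{l:subadd_D} hold verbatim with this shifted quantity, because their proofs only manipulate convolutions of the relevant set and an arbitrary test set $B$ — replacing $C$ by $C+\a$ and $B$ by $B+\a$ throughout changes nothing. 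Next I would run the algorithm of the proof of Theorem \ref{t:BW_D} on the pair $(D^{+}(B),\, q^\times_\a(C))$: at each stage, if $q^\times_\a(C_j) > |A|/M$, one extracts a large subset $D_j\subseteq C_j$ with $D^{+}(D_j)\lesssim |D_j|^2 M / |C_j|^2 \lesssim M$ and $|D_j|\gtrsim |C_j|M^{-1/2}$, exactly as before (the only modification is that the witnessing set lives in $(A+\a)/(G+\a)$ rather than $A/G$). Accumulating the $D_j$ into $B_k$ and bounding $D^{+}(B_k)\lesssim M^{3/2}$ via Lemma \ref{l:subadd_D} and a dyadic pigeonholing over $D^{+}(D_i)$, one obtains after optimizing $M = |A|^{2/5}$ that $D^{+}(B)\lesssim |A|^{3/5}$ and $q^\times_\a(C)\lesssim |A|^{3/5}$. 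Feeding the first bound into Lemma \ref{l:gen_sigma} gives $\E^{+}(B)\lesssim |A|^{3-1/5}$; the second, into the shifted Lemma \ref{l:gen_sigma_m}, gives $\E^\times(C+\a)\lesssim |A|^{3-1/5}$. Wait — the statement asks for $\E^\times(B)$, not $\E^{+}(B)$, on the first coordinate; so in fact one runs the algorithm symmetrically with $(q^{+}(B),\, q^\times_\a(C))$ replaced by $(q^\times(B),\, q^\times_\a(C))$ only if a sum--product input is available, and indeed the relevant sum--product estimate relating $q^\times(B)$ and $q^\times_\a(C)$ for $B\sqcup C = A$ is precisely the content of \cite{Shkredov_R[A]}, which yields the needed $|A_1|^5|A|^2 \gtrsim \E^\times_3(A)\,\E^\times_3(A_1+\a)$-type inequality to start the iteration. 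The second assertion, with $\E^{+}(B')$ and $\E^{+}(\a/C')$, follows by applying the first part to $A^{-1}$ (the inversion map swaps $+$ and $\times$ and sends $C+\a$ structure into $\a/C$ structure, since $1/(c+\a) $ relates to $\a/(c')$ after a Möbius change of variable), or equivalently by running the dual algorithm directly.

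The main obstacle is the first step: one must verify that the incidence-geometric input from \cite{Shkredov_R[A]} genuinely produces a bound on $\E^\times_3(C+\a,B)$ of the uniform shape $q(A)|A||B|^2$ needed to define a legitimate SzT-type parameter — i.e. that the Szemer\'edi--Trotter theorem applied to the curves arising from the shift is as strong as in the unshifted case. Once that is in hand, everything else is a transcription of the proof of Theorem \ref{t:BW_D}, since Lemmas \ref{l:D_q}, \ref{l:subadd_D}, \ref{l:gen_sigma}, \ref{l:gen_sigma_m}, and \ref{l:d_r} are all insensitive to replacing a set by its shift or its reciprocal. I would therefore spend the bulk of the write-up on the shifted SzT-type lemma and merely indicate that the iteration and optimization are identical, referring back to the proof of Theorem \ref{t:BW_D}.
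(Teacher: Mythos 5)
The paper does not actually prove Theorem~\ref{t:BW_mult}: it merely states it and attributes the argument to ``the method of \cite{Shkredov_R[A]}, combining with the arguments of \cite{KS2}''. So there is no explicit proof in the paper to compare against, and your proposal is in essence an attempt to unpack that citation. Your high-level route --- set up a shifted Szemer\'edi--Trotter quantity, verify that Lemmas~\ref{l:D_q}, \ref{l:subadd_D}, \ref{l:gen_sigma}, \ref{l:gen_sigma_m} pass through unchanged under $C\mapsto C+\a$ (which they do, since they treat the set as a black box), and then run the iterative decomposition of Theorem~\ref{t:BW_D} --- is the intended one. You also correctly flag the genuine nontrivial step.

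That said, the write-up has two gaps you should address. First, the mid-proof correction is more than cosmetic: once you replace the pair $(D^{+}(B),\,q^\times_\a(C))$ by $(q^\times(B),\,q^\times_\a(C))$, the engine of the algorithm changes. The key step in Theorem~\ref{t:BW_D} is the extraction: $D^{\times}(C')$ large forces a large $A'\subseteq C'$ contained in $\Sym^\times_q(S_\tau,G)$, and then Lemma~\ref{l:d_r} gives $D^{+}(A')\lesssim |S_\tau|^2|G|^2/(q^3|A'|)$. For the present theorem you need the analogous but different implication ``$q^\times_\a(C')$ large $\Rightarrow$ large $A'\subseteq C'$ with $D^{\times}(A')\lesssim M$'', and you never state the lemma that delivers it; this is precisely where the incidence geometry of \cite{Shkredov_R[A]} enters and it is the one step that is not a mechanical rewrite of the proof of Theorem~\ref{t:BW_D}. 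The proposal is not complete until that extraction lemma is isolated and proved (or cited precisely). Second, deducing the assertion about $\E^{+}(B')$, $\E^{+}(\a/C')$ ``by applying the first part to $A^{-1}$'' does not work: $x\mapsto x^{-1}$ carries neither $C+\a$ to $\a/C$ nor multiplicative energy to additive energy. The two halves of the theorem are $+/\times$ duals under the interchange of roles in the Szemer\'edi--Trotter setup (the analogue of Definition~\ref{def:SzT-type_m} with $\Sym^+_t$ instead of $\Sym^\times_t$), and the second half should be proved by running the dual algorithm, as you gesture at in your final clause; the ``M\"obius change of variable'' phrasing is misleading and should be dropped.
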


Again one can prove a similar result for the energies $\E_3^{+} (B)$, $\E_3^\times (C+\a)$
 or for the quantities $D^{+} (B)$, $D^\times (C+\a)$ but we left it for the interested reader.
Let us derive a consequence of the result above.

\begin{corollary}
    Let $A\subset \R$ be a finite set.
    Put
$$
    R[A] = \left\{ \frac{a_1-a}{a_2-a} ~:~ a,a_1,a_2 \in A,\, a_2 \neq a \right\} \,.
$$
    Then there are two sets $R', R'' \subseteq R[A]$, $|R'|, |R''| \ge |R[A]|/2$
    such that $\E^\times (R') \lesssim |R'|^{3-1/5}$ and $\E^+ (R'') \lesssim |R''|^{3-1/5}$.
\label{c:R_energy}
\end{corollary}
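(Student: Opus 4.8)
The plan is to exploit an internal symmetry of $R[A]$ and then feed $R[A]$ itself into Theorem \ref{t:BW_mult}. First I would record that $R[A]$ is closed under the two M\"{o}bius maps $x\mapsto 1/x$ and $x\mapsto 1-x$: interchanging $a_1$ with $a_2$ gives $1/R[A]\subseteq R[A]$ (on the non--zero part), and the identity $1-\frac{a_1-a}{a_2-a} = \frac{a_1-a_2}{a-a_2}$ gives $1-R[A] = R[A]$; equivalently, $R[A]$ is invariant under the whole anharmonic group, though only these two transformations are used below. I would also invoke the trivial invariances $\E^{\times} (S) = \E^{\times} (-S) = \E^{\times} (1/S)$ and $\E^{+} (S) = \E^{+} (c-S)$, valid for any finite $S\subset\R$ and any $c\in\R$.

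Now set $R = R[A]$ and $N = |R|$. For the set $R'$ I would apply the first part of Theorem \ref{t:BW_mult} to $R$ with $\a = -1$: this produces $R = B\sqcup C$ with $\E^{\times} (B), \E^{\times} (C-1) \lesssim N^{3-1/5}$. Since $C-1 = -(1-C)$ we get $\E^{\times} (1-C) = \E^{\times} (C-1) \lesssim N^{3-1/5}$ and, crucially, $1-C\subseteq 1-R = R$; hence $B$ and $1-C$ are both subsets of $R[A]$ of small multiplicative energy, and as $|B| + |1-C| = |B| + |C| = N$ one of them has size at least $N/2$, which is the required $R'$. For $R''$ I would apply the second part of Theorem \ref{t:BW_mult} to $R$ with $\a = 1$: this produces $R = B'\sqcup C'$ with $\E^{+} (B'), \E^{+} (1/C') \lesssim N^{3-1/5}$, and now $1/C'\subseteq 1/R\subseteq R$, so again two subsets of $R[A]$ of small additive energy are produced, one of size at least $N/2$, which I take to be $R''$. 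Since $|R'|$ and $|R''|$ are comparable to $|R[A]|$, the factor $N^{3-1/5}$ may be rewritten as $|R'|^{3-1/5}$ and $|R''|^{3-1/5}$ respectively.

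The step I expect to be the real (if modest) point is choosing the sign of $\a$ correctly, so that the shift appearing in Theorem \ref{t:BW_mult} lands the relevant set back inside $R[A]$: for $\E^{\times}$ one must take $\a = -1$, turning $C+\a$ into $1-C$ up to the sign automorphism of $\E^{\times}$, while for $\E^{+}$ one takes $\a = 1$ so that $1/C'$ is literally a subset of $R[A]$. The remaining issues are routine: the element $0\in R[A]$ is the only obstruction to $|1/C'| = |C'|$, and it can be absorbed into $B'$ at a cost of $O(N^2)\ll N^{3-1/5}$ to $\E^{+} (B')$, so the bounds $|R'|,|R''|\ge|R[A]|/2$ are unaffected.
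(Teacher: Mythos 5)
Your proof is correct and follows essentially the same route as the paper: both apply Theorem \ref{t:BW_mult} with $\a=-1$ for the multiplicative half (using $R=1-R$ and $\E^\times(-S)=\E^\times(S)$ to place $1-C$ back inside $R[A]$) and with $\a=1$ for the additive half (using $(R^*)^{-1}=R^*$). Your explicit treatment of the element $0\in R[A]$, absorbing it into $B'$ at energy cost $O(|R|^2)\ll |R|^{3-1/5}$, is precisely the routine fix the paper leaves implicit.
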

\begin{proof}
First of all let us prove the existence of the set $R'$.
Put $R=R[A]$, $R^* = R \setminus \{0\}$, and $\d =1/5$.
Using Theorem \ref{t:BW_mult}, we find $B,C \subseteq R$ such that $R = B\sqcup C$ and
$$
    \max\{ \E^{\times} (B), \E^{\times} (C-1)\} \lesssim  |R|^{3-\d} \,.
$$
%where $\d =1/5$.
If $|B|\ge |R|/2$ then we are done.
Suppose not.
Then $|C| \ge |R|/2$ and in view of the formula $R=1-R$, see \cite{Shkredov_R[A]}, we obtain that
$C' := 1-C \subseteq R$, $|C'| = |C| \ge |R|/2$ and
$$
    \E^{\times} (C') = \E^{\times} (1-C) = \E^\times (C-1) \lesssim  |R|^{3-\d} \,.
$$
So, putting $R'$ equals $B$ or $C'$, we obtain the result.

To find the set $R''$ note that $(R^*)^{-1} = R^*$ and use the second part of Theorem \ref{t:BW_mult} as well as the arguments above.
This completes the proof.
$\hfill\Box$
\end{proof}

\bigskip

In particular,
%the result above
Corollary \ref{c:R_energy}
says that the set $R$ has large $R+R$ and $RR$ (the last fact is known from paper \cite{Shkredov_R[A]} or can be obtained as a direct  application of the Szemer\'{e}di--Trotter theorem).

%\bigskip

%Considering $A=\{ 1,2,\dots,n\}$, we see that $R[A] \subseteq [-n^2, n^2]$, $|R[A]| \gg n^2$ and thus
%$\E^{+} (R') \gg |R'|^3$ for any $R'\subseteq R[A]$, $|R'| \gg |R|$.
%Hence Corollary \ref{c:R_energy} does not hold for the additive energy of large subsets of  $R[A]$.

\bigskip

The same proof allows us to find a subset $A'_s$ of the set $A_s \cup (A_{-s}) = A_s \cup (A_s -s)$, $A_s = A\cap (A+s)$,
$s\in (A-A) \setminus \{0\}$ of cardinality $|A_s|/2$ such that $\E^\times (A'_s) \lesssim |A'_s|^{3-1/5}$
(similarly one can consider the set $A\cap (s-A)$
%to avoid of using
and find a subset of size at least $|A\cap (s-A)|/2$ with small multiplicative energy).
This question is a dual one which appeared in \cite{KS1}, \cite{KS2}.
The same result holds for some multiplicative analog of the sets $A_s$, namely, $A^*_s = A\cap (s/A)$,
$s\in AA \setminus \{0\}$. 
\section{Appendix}
\label{sec:appendix}

We finish the paper
%section
discussing some generalizations of norms $E_k$.
Because our arguments almost repeat the
%proofs
methods
of section \ref{sec:proof} we give the sketch of the proofs sometimes.

Take $l \ge 2$, $k\ge 2$ and suppose that either $k$ or $l$ is even.
Basically, we restrict ourselves considering the case of real functions.
For any such a function $f$ put
\begin{equation}\label{f:E_k_l}
    \| f \|^{kl}_{E_{k,l}} := \sum_{x_1,\dots,x_{k-1}} \Cf^l_{k} (f) (x_1,\dots,x_{k-1})
        =
            \sum_{y_1,\dots, y_{l-1}} \Cf^k_l (f) (y_1,\dots,y_{l-1}) \ge 0 \,,
\end{equation}
where we have used formula (\ref{f:gen_C}) of Lemma \ref{l:commutative_C} to obtain  the second identity in (\ref{f:E_k_l}).
Again for even $k$ and $l$, we
%obtain
get
$\|f \|_{E_{k,l}} \ge \| f\|_l, \|f\|_k$ and hence $\| f \|_{E_{k,l}} = 0$ iff $f\equiv 0$ in the case.

\bigskip

Similarly, we obtain  an analog of Lemma \ref{l:E_k_prod}.

\begin{lemma}
    For any $k,l\ge 2$ and arbitrary functions $\_phi_1, \dots, \_phi_k : \Gr^{l} \to \C$,
    $\_phi_j = (\_phi^{(1)}_j, \dots, \_phi^{(l)}_j)$ the following holds
\begin{equation}\label{f:E_k_l_prod}
    \left| \sum_{x \in \Gr^{l-1}} \Cf_{l} (\_phi_1) (x) \dots \Cf_{l} (\_phi_k) (x) \right|
        \le
            \prod_{j=1}^{k} \prod_{i=1}^l \| |\_phi^{(i)}_j| \|_{E_{k,l}} \,.
\end{equation}
    If $k,l\ge 2$ are even and all functions are real then one can remove modulus from formula (\ref{f:E_k_l_prod}).
\label{l:E_k_l_prod}
\end{lemma}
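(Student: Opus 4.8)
The plan is to mimic the proof of Lemma \ref{l:E_k_prod} but one level up, replacing the use of formula (\ref{f:scalar_C}) by formula (\ref{f:gen_C}) of Lemma \ref{l:commutative_C}. First I would use commutativity of the generalized convolution to rewrite the left-hand side as a genuine inner product: applying (\ref{f:scalar_C}) with $l$ in place of the dummy length and the functions $\_phi^{(i)}_j$ grouped appropriately, the sum $\sum_{x\in\Gr^{l-1}} \Cf_l(\_phi_1)(x)\cdots\Cf_l(\_phi_k)(x)$ becomes $\sum_{z} \prod_{j,i} (\text{something involving } \_phi^{(i)}_j \c \_phi^{(i)}_{j'})$-type expression, or more directly a multi-scalar product of the $\Cf_l(\_phi_j)$'s over the $k$ factors. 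Then a single application of Cauchy--Schwarz (pairing the $k$ factors into two halves, exactly as in Lemma \ref{l:E_k_prod}) reduces matters to the diagonal case $\_phi_1=\dots=\_phi_k$.

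The heart of the argument is then the diagonal estimate: for a single tuple $\_phi=(\_phi^{(1)},\dots,\_phi^{(l)})$ of functions $\Gr\to\C$ one must show
$$
    \left|\sum_{x\in\Gr^{l-1}} \Cf_l(\_phi)(x)^k \right| \le \prod_{i=1}^l \| |\_phi^{(i)}| \|_{E_{k,l}}^k \,.
$$
Here I would expand $\Cf_l(\_phi)(x)^k$ as an iterated sum over $z_1,\dots,z_k$ and recognize, after summing over $x$, that what appears is a product of $l$ copies of the quantity defining $\| \cdot \|_{E_{k,l}}^{kl}$ — one copy for each coordinate slot $i\in[l]$ — each evaluated at $|\_phi^{(i)}|$ after bounding $|\_phi^{(i)}\c\_phi^{(i)}|$ by $|\_phi^{(i)}|\c|\_phi^{(i)}|$ as in Lemma \ref{l:E_k_prod}. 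Using the symmetric description of $\| f\|_{E_{k,l}}^{kl}$ in (\ref{f:E_k_l}) (the identity $\sum_{x}\Cf_k^l(f)(x) = \sum_y \Cf_l^k(f)(y)$ from (\ref{f:gen_C})) lets me match the combinatorics on both sides. For even $k,l$ and real functions, the moduli can be dropped because each $\Cf_l(f)(x)^k$ and each factor in (\ref{f:E_k_l}) is then manifestly nonnegative (a $k$-th power of a real number with $k$ even, resp. the nonnegativity already recorded after (\ref{f:E_k_l})), so no absolute values are needed and the inequality becomes an honest chain of Cauchy--Schwarz/Hölder steps.

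The main obstacle I anticipate is purely bookkeeping: tracking which coordinate of $\Gr^l$ each $\_phi^{(i)}_j$ lives in through the commutativity identities of Lemma \ref{l:commutative_C}, and making sure the Cauchy--Schwarz split on the outer index $j\in[k]$ interacts correctly with the Hölder split on the inner index $i\in[l]$ — there is a genuine risk of double-counting or of producing the wrong exponent $k$ versus $kl$ in the final product. A clean way to avoid this is to first establish the pure diagonal inequality for a single tuple (the displayed inequality above), treating it as the analog of the "$\sum_x |(\bar f\c f)(x)|^k \le \|\,|f|\,\|_{E_k}^{2k}$" step, and only afterwards peel off the Cauchy--Schwarz in $j$; this is exactly the two-stage structure of the proof of Lemma \ref{l:E_k_prod} and it keeps the two kinds of indices from getting entangled.
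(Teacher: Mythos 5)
Your plan is essentially the paper's: reduce to the diagonal tuple $\_phi_1=\dots=\_phi_k$ by H\"older in the $k$ outer factors (for $k>2$ this is H\"older or iterated Cauchy--Schwarz, not a single Cauchy--Schwarz), transpose $\sum_{x\in\Gr^{l-1}}\Cf_l^k(\_phi)(x)$ into $\sum_{y\in\Gr^{k-1}}\prod_{i=1}^l\Cf_k(\_phi^{(i)})(y)$ via the multi--scalar identity (\ref{f:gen_C}), then apply H\"older again in the $l$ inner factors to obtain $\prod_{i=1}^l\|\,|\_phi^{(i)}|\,\|_{E_{k,l}}^{k}$. The only phrase to tighten is ``recognize that what appears is a product of $l$ copies of $\|\cdot\|_{E_{k,l}}^{kl}$'' --- the transposed sum is not itself that product, it is \emph{bounded} by it through the second H\"older step --- but since your sketch already flags exactly this bookkeeping risk and frames the argument as a chain of Cauchy--Schwarz/H\"older steps with the diagonal estimate at its heart, the proposal is correct and coincides with the paper's proof.
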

\begin{proof}
    Let $\_phi = (\_phi^{(1)}, \dots, \_phi^{(l)})$.
    By the H\"{o}lder inequality it is sufficient to have deal with
$$
    \sum_{x \in \Gr^{l-1}} \Cf^k_{l} (\_phi) (x)
        = \sum_{x\in \Gr^{k-1}} \Cf_k (\_phi^{(1)}) (x) \dots \Cf_k (\_phi^{(l)}) (x) \,,
$$
    where we have used formula (\ref{f:gen_C}) of Lemma \ref{l:commutative_C}.
   %Using
   Applying
   the H\"{o}lder inequality again we obtain the required result.
%This completes the proof.
$\hfill\Box$
\end{proof}

\bigskip

%Thus we have obtained
An analog of Proposition \ref{p:E_k-norm} is the following.

\begin{proposition}
    Let $k,l \ge 2$ be integers.
    Then for any pair of functions $f,g : \Gr \to \C$ the following holds
    $$\| f + g\|_{E_{k,l}} \le \| |f| \|_{E_{k,l}} + \| |g| \|_{E_{k,l}} \,.$$
    If
    %$f,g$ are
    we consider just real functions
    and $k,l$ are  even
    numbers   then
    %In other words,
    $\| \cdot \|_{E_{k,l}}$ is a norm.
\label{p:E_k_l-norm}
\end{proposition}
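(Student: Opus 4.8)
The plan is to mimic the proof of Proposition \ref{p:E_k-norm} almost verbatim, replacing the single index $k$ by the pair $(k,l)$ and using Lemma \ref{l:E_k_l_prod} in place of Lemma \ref{l:E_k_prod}. First I would expand the $(kl)$-th power using the generalized convolution description of $\|\cdot\|_{E_{k,l}}$ from formula (\ref{f:E_k_l}). Namely, writing $h = f+g$, one has $\Cf_{l}(h,\dots,h) = \Cf_l$ applied coordinatewise, so when we raise $\Cf^{l}_{k}(h)$ (equivalently expand $\|h\|^{kl}_{E_{k,l}} = \sum_{y}\Cf^k_l(h)(y)$) every one of the $l$ slots in $\Cf_l$ independently gets expanded into $f + g$. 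This produces a sum over $\eps \in \Omega_l = \{0,1\}^l$ of terms of the form $\prod_{i=1}^l h^{(i)}$ with $h^{(i)} \in \{f,g\}$ according to $\eps$, each raised to some multiplicity $n_\eps$, with $\sum_{\eps} n_\eps = k$; the multinomial coefficient attached is $\binom{k}{(n_\eps)_{\eps \in \Omega_l}}$.

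Next I would bound each such term. For a fixed assignment $(n_\eps)_{\eps}$, the corresponding contribution is a sum of the type $\sum_x \prod_{\eps}\Cf_l(\_phi_\eps)(x)^{n_\eps}$ where $\_phi_\eps$ has coordinates equal to $f$ or $g$ as dictated by $\eps$. Applying Lemma \ref{l:E_k_l_prod} (with the $k$ functions being the $n_\eps$ copies of each $\_phi_\eps$), this is at most $\prod_{\eps} \prod_{i=1}^l \| |\_phi_\eps^{(i)}| \|_{E_{k,l}}^{n_\eps}$. Counting how many of the $l\cdot k$ coordinate-factors over all $\eps$ are an $f$ versus a $g$: an $\eps$ with $\wt(\eps) = r$ contributes $r$ factors of (the modulus of) $g$ and $l-r$ factors of $f$ per copy, so the total exponent of $\||g|\|_{E_{k,l}}$ is $n := \sum_{\eps}\wt(\eps) n_\eps$ and that of $\||f|\|_{E_{k,l}}$ is $lk - n$. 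Hence the $(n_\eps)$-term is at most $\binom{k}{(n_\eps)_\eps}\, \||f|\|_{E_{k,l}}^{lk-n}\, \||g|\|_{E_{k,l}}^{n}$.

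Finally I would collect terms by the value of $n$ and invoke Lemma \ref{l:comb}, formula (\ref{f:comb1}), which states precisely that $\sum_{\sum n_\eps = k,\ \sum \wt(\eps)n_\eps = n} \frac{k!}{\prod_\eps n_\eps!} = \binom{lk}{n}$. Therefore
$$
    \| f+g\|^{kl}_{E_{k,l}} \le \sum_{n=0}^{lk} \binom{lk}{n} \||f|\|_{E_{k,l}}^{lk-n} \||g|\|_{E_{k,l}}^{n} = \left( \||f|\|_{E_{k,l}} + \||g|\|_{E_{k,l}} \right)^{lk},
$$
and taking $(kl)$-th roots gives the triangle inequality. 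For the norm claim when $k,l$ are even and $f,g$ real: homogeneity is immediate from the definition, the triangle inequality is what we just proved (and the moduli can be dropped by the second part of Lemma \ref{l:E_k_l_prod}), and nondegeneracy follows from the remark preceding the statement that $\|f\|_{E_{k,l}} \ge \|f\|_k$, $\|f\|_l$, hence vanishes only for $f \equiv 0$. The main obstacle, as in the single-index case, is purely bookkeeping: correctly tracking the combinatorics of which coordinate slots become $f$ or $g$ so that the exponents line up with Lemma \ref{l:comb}; the analytic content is entirely carried by Lemmas \ref{l:E_k_l_prod} and \ref{l:comb} and requires no new ideas.
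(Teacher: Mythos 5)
Your proof is correct and follows the paper's argument essentially verbatim: expand $\Cf^k_l(f+g)$ by the multinomial theorem over $\eps\in\Omega_l$, bound each term via Lemma \ref{l:E_k_l_prod}, and resum using Lemma \ref{l:comb} to recover $(\||f|\|_{E_{k,l}}+\||g|\|_{E_{k,l}})^{kl}$. The only difference is an inessential reversal of the labeling convention (you let $\wt(\eps)$ count $g$-slots, the paper counts $f$-slots), and your treatment of nondegeneracy for even $k,l$ correctly cites the remark preceding the proposition, as the paper implicitly does.
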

\begin{proof}
    %Put
    Recall that $\Omega_l := \{0,1\}^l$.
    %$n=(n_1,\$
We have
$$
    \sigma:= \| f + g\|^{kl}_{E_{k,l}} = \sum_{x\in \Gr^{l-1}} \Cf^k_l (f+g) (x)
        =
            \sum_{x\in \Gr^{l-1}} \left( \sum_{\eps \in \Omega_l} \Cf_l (\_phi_\eps) (x) \right)^k
                =
$$
$$
                =
                    \sum_{\sum_{\eps \in \Omega_l} n_\eps = k}
                        \binom{k}{n_1,\dots,n_{2^l}} \sum_{x\in \Gr^{l-1}} \prod_{\eps \in \Omega_l} \Cf^{n_\eps}_l (\_phi_\eps) (x) \,,
$$
where for $\eps = (\eps_1,\dots, \eps_{l}) \in \Omega_l$ we put $\_phi_\eps = (\_phi_{\eps_1}, \dots, \_phi_{\eps_l})$
and denote $\_phi_1 = f$, $\_phi_0 = g$.
Let
%also
$n=(n_\eps) \in \N_0^{2^l}$ and $q(n) := \sum_{\eps \in \Omega_l} \wt (\eps) n_\eps$.
Applying Lemma \ref{l:E_k_l_prod} and Lemma \ref{l:comb}, we obtain
$$
    \sigma
        \le
            \sum_{\sum_{\eps \in \Omega_l} n_\eps = k} \binom{k}{n_1,\dots,n_{2^l}}
                \| |f| \|^{q(n)}_{E_{k,l}} \| |g| \|^{kl-q(n)}_{E_{k,l}}
                    =
                        \sum_{i+j=kl}  \frac{(kl)!}{i! j!} \| |f| \|^{i}_{E_{k,l}} \| |g| \|^{j}_{E_{k,l}}
                            =
$$
$$
                            =
                                (\| |f| \|_{E_{k,l}} + \| |g| \|_{E_{k,l}})^{kl}
                                %\,.
$$
as required.
%This completes the proof.
$\hfill\Box$
\end{proof}

\bigskip

\noindent{I.D.~Shkredov\\
%%\noindent{РћС‚РґРµР» Р°Р»РіРµР±СЂС‹ and С‚РµРѕСЂРёРё С‡РёСЃРµР»,\\
Steklov Mathematical Institute,\\
ul. Gubkina, 8, Moscow, Russia, 119991}
%MSU, IPPI RAN\\}
%%\\
%and
%\\
%Delone Laboratory of Discrete and Computational Geometry,\\
%Yaroslavl State University,\\
%Sovetskaya str. 14, Yaroslavl, Russia, 150000
\\
and
\\
IITP RAS,  \\
Bolshoy Karetny per. 19, Moscow, Russia, 127994\\
{\tt ilya.shkredov@gmail.com}


\begin{thebibliography}{99}



\bibitem{BW}
{\sc A.~Balog, T.D.~Wooley, }
\emph{A low--energy decomposition theorem, } arXiv:1510.03309v1 [math.NT] 12 Oct 2015.


\bibitem{ES}
{\sc P.~Erd\"{o}s, E.~Szemer\'{e}di, }
\emph{On sums and products of integers, }
Studies in pure mathematics, 213--218, Birkh\"auser, Basel, 1983.


\bibitem{Hanson}
{\sc B. Hanson, }
\emph{Estimates for character sums with various convolutions, } arXiv:1509.04354.


\bibitem{KS1}
{\sc S.V.~Konyagin, I.D.~Shkredov, }
{\em On sum sets of sets, having small product sets, }
Transactions of Steklov Mathematical Institute, {\bf 3}:290 (2015), 304--316.


\bibitem{KS2}
{\sc S.V.~Konyagin, I.D.~Shkredov, }
{\em New results on sum--product in $\R$, }
Transactions of Steklov Mathematical Institute, accepted,
arXiv:1602.03473v1 [math.CO] 10 Feb 2016.



\bibitem{RR-NS}
{\sc O.E.~Raz, O.~Roche--Newton, M.~Sharir, }
{\em Sets with few distinct distances do not have heavy lines, }
arXiv:1410.1654v1 [math.CO] 7 Oct 2014.


\bibitem{Rudin_book}
{\sc W.~Rudin, } {\em Fourier analysis on groups,}  Wiley 1990 (reprint of the 1962 original).


\bibitem{SS1}
{\sc T. Schoen, I.D. Shkredov, }
{\em Higher moments of convolutions, }
J. Number Theory {\bf 133} (2013), no. 5, 1693--1737.



\bibitem{s_energy}
{\sc I.D. Shkredov, }
{\em Energies and structure of additive sets, }
Electronic Journal of Combinatorics, 21(3) (2014), \#P3.44, 1--53.


\bibitem{s_sumsets}
{\sc I.D. Shkredov, }
{\em On sums of Szemer\'{e}di--Trotter sets, }
Transactions of Steklov Mathematical Institute, 289 (2015), 300--309.


\bibitem{Shkredov_R[A]}
{\sc I.D.~Shkredov, }
{\em Difference sets are not multiplicatively closed, }
arXiv:1602.02360v2 [math.NT] 14 Feb 2016.


\bibitem{soly}
{\sc J. Solymosi, }
{\em Bounding multiplicative energy by the sumset, }
Advances in Mathematics Volume 222, Issue 2, (2009), 402--408.



\bibitem{TV}
{\sc T. Tao, V. Vu, }
{\em Additive Combinatorics, }
Cambridge University Press (2006).



\end{thebibliography}
\end{document}